\documentclass{amsart}

\usepackage{amsfonts, amssymb, amsmath, eucal, verbatim, amsthm, amscd, enumerate}

\usepackage{graphicx}
\usepackage{framed}
\usepackage{tikz}
\usepackage{enumitem}

\usepackage{ascmac}

\DeclareMathOperator*{\esssup}{ess\,sup}

\DeclareGraphicsRule{.ai}{pdf}{.ai}{}
\newtheorem{theorem}{Theorem}[section]
\newtheorem{lemma}[theorem]{Lemma}
\newtheorem{proposition}[theorem]{Proposition}

\newtheorem{corollary}[theorem]{Corollary}
\theoremstyle{definition}
\newtheorem{definition}[theorem]{Definition}

\newtheorem{claim}[theorem]{Claim}

\theoremstyle{remark}
\newtheorem*{remark}{Remark}
\newtheorem*{remarks}{Remarks}

\newtheorem*{organization}{Organization}

\newcommand{\vertiii}[1]{{\left\vert\kern-0.25ex\left\vert\kern-0.25ex\left\vert #1 
\right\vert\kern-0.25ex\right\vert\kern-0.25ex\right\vert}}

\parindent0pt
\addtolength{\baselineskip}{0.5\baselineskip}
\addtolength{\parskip}{12pt}

\numberwithin{equation}{section}

\usepackage{setspace}

\def\1{\textbf{\rm 1}}

\def\XXint#1#2#3{{\setbox0=\hbox{$#1{#2#3}{\int}$}
\vcenter{\hbox{$#2#3$}}\kern-.5\wd0}}

\usepackage[margin=40mm]{geometry}

\begin{document}

\date{\today}
\keywords{Brascamp--Lieb inequality, heat-flow}

\subjclass[2010]{42B37 (primary); 44A12, 52A40 (secondary)}
\author[Bez]{Neal Bez}
\address[Neal Bez]{Department of Mathematics, Graduate School of Science and Engineering,
Saitama University, Saitama 338-8570, Japan}
\email{nealbez@mail.saitama-u.ac.jp}
\author[Nakamura]{Shohei Nakamura}
\address[Shohei Nakamura]{Department of Mathematics, Graduate School of Science, Osaka University, Toyonaka, Osaka 560-0043, Japan}
\email{srmkn@math.sci.osaka-u.ac.jp}
\thanks{This work was supported by JSPS Kakenhi grant numbers 18KK0073, 19H00644, 19H01796 and 23H01080 (Bez), and Grant-in-Aid for JSPS Research Fellow no. 17J01766 and JSPS Kakenhi grant numbers 19K03546, 19H01796 and 21K13806 (Nakamura).
}

\title[Regularized Brascamp--Lieb inequalities]{Regularized Brascamp--Lieb inequalities}

\begin{abstract}
Given any (forward) Brascamp--Lieb inequality on euclidean space, a famous theorem of Lieb guarantees that gaussian near-maximizers always exist.
Recently, Barthe and Wolff used mass transportation techniques to establish a counterpart to Lieb's theorem for all non-degenerate cases of the inverse Brascamp--Lieb inequality. Here we build on work of Chen--Dafnis--Paouris and employ heat-flow techniques to understand the inverse Brascamp--Lieb inequality for certain regularized input functions, in particular extending the Barthe--Wolff theorem to such a setting. Inspiration arose from work of Bennett, Carbery, Christ and Tao for the forward inequality, and we recover their generalized Lieb's theorem using a clever limiting argument of Wolff. In fact, we use Wolff's idea to deduce regularized inequalites in the broader framework of the forward-reverse Brascamp--Lieb inequality, in particular allowing us to recover the gaussian saturation property in this framework first obtained by Courtade, Cuff, Liu and Verd\'u.
\end{abstract}

\maketitle

\section{Introduction} \label{section:intro}
The main subject of this paper is the inverse Brascamp--Lieb inequality 
\begin{equation}\label{e:IBL}
\int_{\mathbb{R}^n} e^{-\pi \langle x, \mathcal{Q}x\rangle}  \prod_{j=1}^m f_j(B_jx)^{c_j}\, dx \ge C  \prod_{j=1}^m \bigg( \int_{\mathbb{R}^{n_j}} f_j \bigg)^{c_j}
\end{equation}
associated with a self-adjoint linear transformation $\mathcal{Q} : \mathbb{R}^n \to \mathbb{R}^n$, and families of linear transformations $B_j : \mathbb{R}^n \to \mathbb{R}^{n_j}$ and exponents $c_j \in \mathbb{R}$. Largely thanks to recent work of Barthe--Wolff \cite{BWAnn,BW} and Chen--Dafnis--Paouris \cite{CDP}, much is known about inverse Brascamp--Lieb inequalities for \textit{general} input functions $f_j$. For example, gaussian near-minimizers always exist for \eqref{e:IBL} in all non-degenerate situations; this was established in full generality in \cite{BW} (using mass transportation) and in certain special cases in \cite{CDP} (using heat flow in the spirit of the work of Carlen--Lieb--Loss \cite{CLL} and Bennett--Carbery--Christ--Tao \cite{BCCT} on the forward Brascamp--Lieb inequality). Such a result is a counterpart to Lieb's famous theorem for the forward version of \eqref{e:IBL}. In the present work, we investigate what happens when the input functions $f_j$ are ``regularized" in the sense that they coincide with the evolution of positive finite Borel measures under certain heat equations, and our main result is a regularized version of the aforementioned theorem of Barthe and Wolff. 

Our motivation to pursue this project arose from several directions. Firstly, a regularized version of the forward Brascamp--Lieb inequality was considered by Bennett--Carbery--Christ--Tao in \cite{BCCT} and, in particular, they were able to provide a completely new proof of Lieb's theorem based on heat flow (see the forthcoming Theorem \ref{t:BCCT_genLieb} for a version which incorporates a gaussian kernel). This approach to Lieb's theorem was an important source of inspiration to us and we shall see that our analysis of \eqref{e:IBL} for regularized inputs will naturally yield the inverse version of Lieb's theorem in full generality. In this sense, we follow Chen--Dafnis--Paouris \cite{CDP} by adopting heat-flow techniques for the inverse inequality, extend their gaussian saturation result to full generality, and thus re-derive the gaussian saturation result of Barthe--Wolff \cite{BW} by heat-flow regularization. The special case considered in \cite{CDP} is a certain ``geometric" version of the inverse Brascamp--Lieb inequality (see Section \ref{section:prelims} for further details) and the extension to the general case is far from straightforward.

More broadly speaking, the act of regularizing an inequality by restricting the input functions to a smaller subclass of sufficiently well-behaved functions is natural and ubiquitous. One potential benefit of doing so is to raise of possibility of establishing the existence of extremizers for the inequality amongst the restricted subclass, and in doing so may open up a fruitful approach to analyze the general form of the inequality. As we shall see more precisely later, this is the philosophy behind the aforementioned heat-flow proof of Lieb's theorem in \cite{BCCT}, where general input functions are treated as limits of solutions to heat equations at time zero.

Another virtue of restricting the inputs to regularized functions is that one may obtain an improved form of the inequality. Such a perspective is often taken up in fields such as convex geometry, differential geometry, probability and information theory, in which case  the regularization is often described in terms of semi/uniform log-concavity and log-convexity. For example, a famous conjecture of Talagrand predicts that Markov's inequality can be improved if one restricts to regularized functions; see, for example, \cite{EL}. This conjecture was originally stated for the discrete cube and as far as we are aware it is still an open problem in that form, but its analogue for gaussian space has been affirmatively solved by Eldan--Lee \cite{EL}. 

In differential geometry, one can use functional inequalities involving entropy in order to describe the curvature of a Riemannian manifold; this is based on fundamental work of Bakry--Emery and we refer the reader to the book by Bakry--Gentil--Ledoux \cite{BGL} for more details. One example of such an inequality is the (local) log-Sobolev inequality on a weighted Riemannian manifold \cite[Theorem 5.5.2]{BGL}. More recently, several papers have highlighted the importance of improving the log-Sobolev inequality by restricting inputs to regularized functions, which can be regarded as a certain ``curvature improvement". As far as we are aware, the first example of this kind can be found in the work of Fathi--Indrei--Ledoux \cite{FIL} where they improved the best constant for the log-Sobolev inequality under regularization in terms of the Poincar\'{e} constant. Along this line, very recently Aishwarya--Rotem \cite{AiRo} established an improvement of the convexity of entropy under regularization in terms of uniform log-concavity, and related work can also be found in Eldan--Lehec--Shenfeld \cite{ELS} and Bez--Nakamura--Tsuji  \cite{BNT}. Another important entropic inequality in information theory is the Shannon--Stam inequality, which is a special case of the entropic Brascamp--Lieb inequality (see Carlen--Lieb--Loss \cite{CLL} and Carlen--Cordero-Erausquin \cite{CC}). Inspired by important work of Ball--Barthe--Naor \cite{BBN} and Ball--Nguyen \cite{BN} on entropy jump inequalities, the stability problem for the Shannon--Stam inequality has recently attracted attention. In this direction, the regularized framework has played a role; for instance, Courtade--Fathi--Pananjady \cite{CFP} and Eldan--Mikulincer \cite{EM} established stability estimates for the Shannon--Stam inequality for uniformly log-concave random variables.

In convex geometry, regularization appears in terms of the convexity of the boundary of a convex body. 
We mention the work of Schmuckenschl\"{a}ger \cite{Schmu} (see also Klartag--Milman \cite{KlMi}) where regularized convex bodies (2-uniformly convex bodies) were investigated and, in particular, Bourgain's celebrated hyperplane conjecture for such convex bodies was established. Furthermore, the second author, together with Tsuji, pointed out a close link between the Brascamp--Lieb inequality and the volume product of a convex body \cite{NT,NT2}. 
In particular, they developed ideas which arose out of consideration of the regularized Brascamp--Lieb inequality, and succeeded in confirming Mahler's conjecture for certain regularized convex bodies (see \cite{NT}).  

Finally we mention that a regularized version of the forward Brascamp--Lieb inequality of a different (rougher) nature has recently been obtained by Maldague \cite{Maldague} (see also Zorin-Kranich \cite{ZK}), and has found applications to multilinear Kakeya-type inequalities and decoupling theory for the Fourier transform in work of Guo--Oh--Zhang--Zorin-Kranich \cite{GOZZ}. The regularization in \cite{Maldague} has a rough cut-off instead of a gaussian weight factor  and the input functions $f_j$ are constant on cubes in the unit cube lattice.

Before presenting our main results in Section \ref{section:main}, to help set the scene we give an introduction and some background regarding Brascamp--Lieb inequalities. In line with the historical development of the subject, we begin with the forward Brascamp--Lieb inequality. After this, we discuss recent developments regarding the inverse inequality \eqref{e:IBL}. Finally, we introduce the so-called forward-reverse Brascamp--Lieb inequality which originated in Liu--Courtade--Cuff--Verd\'u \cite{LCCV} and whose theory was significantly developed in Courtade--Liu \cite{CouLiu}. The forward-reverse Brascamp--Lieb framework encompasses both the forward and inverse Brascamp--Lieb inequalities as well as the reverse Brascamp--Lieb inequality due to Barthe \cite{Barthe0, Barthe1}. However, it is in fact the case that the gaussian saturation property for the inverse inequality \eqref{e:IBL} implies the gaussian saturation property for the forward-reverse Brascamp--Lieb inequality. This follows from a clever argument of Wolff (e.g. see \cite[Section 4]{CouLiu}), and we shall make use of Wolff's idea to deduce regularized forward-reverse Brascamp--Lieb inequalities from our main result for regularized inverse Brascamp--Lieb inequalities (thus, for example, recovering regularized forward Brascamp--Lieb inequalities due to Bennett--Carbery--Christ--Tao \cite{BCCT}).

\subsection{The forward Brascamp--Lieb inequality} 
Inequalities of the form
\begin{equation} \label{e:FBL}
\int_{\mathbb{R}^n}  \prod_{j=1}^mf_j(B_jx)^{c_j}\, dx \le C  \prod_{j=1}^m \bigg( \int_{\mathbb{R}^{n_j}} f_j \bigg)^{c_j}
\end{equation}
for integrable functions $f_j :\mathbb{R}^{n_j} \to \mathbb{R}_+$ are widely known as \emph{Brascamp--Lieb inequalities}\footnote{We often add the term ``forward" to clarify that we are referring to \eqref{e:FBL} rather than \eqref{e:IBL}.}. Here, the linear transformations $B_j : \mathbb{R}^n \to \mathbb{R}^{n_j}$ and positive exponents $c_j$ are given, and the pair $(\mathbf{B}, \mathbf{c})$ is referred to as a \emph{Brascamp--Lieb datum}, where $\mathbf{B}= (B_j)_{j=1}^m$ and $\mathbf{c} = (c_j)_{j=1}^m$. The best (i.e. smallest) constant $C$ is called the \emph{Brascamp--Lieb constant} and is defined by
\[
{\rm F}(\mathbf{B},\mathbf{c}) = \sup_{\mathbf{f}} {\rm BL}(\mathbf{B},\mathbf{c};\mathbf{f}) \in (0,\infty]
\]
where
\[
{\rm BL}(\mathbf{B},\mathbf{c};\mathbf{f}) = \frac{\int_{\mathbb{R}^n} \prod_{j=1}^mf_j(B_jx)^{c_j}\, dx}{\prod_{j=1}^m ( \int_{\mathbb{R}^{n_j}} f_j )^{c_j}}
\]
is the \emph{Brascamp--Lieb functional}, and the supremum is taken over measurable $f_j :\mathbb{R}^{n_j} \to \mathbb{R}_+$ such that $\int_{\mathbb{R}^{n_j}} f_j \in (0,\infty)$.

The multilinear H\"older, Loomis--Whitney and Young convolution inequalities are standard examples of Brascamp--Lieb inequalities. From a historical perspective, the pursuit of the best constant for the Young convolution inequality was influential in the emergence of the Brascamp--Lieb inequality. Indeed, Beckner \cite{Beck} and Brascamp--Lieb \cite{BL} independently identified that the best constant in non-boundary cases of the Young convolution inequality is attained (essentially uniquely so -- see \cite{BL}) on certain isotropic centred gaussians, and the systematic study of the more general inequality \eqref{e:FBL} traces back to \cite{BL}. With the main focus of the current paper in mind, we also remark that Brascamp and Lieb in \cite{BL} considered the inverse form of the Young convolution inequality in which the direction of the inequality is reversed (and negative exponents $c_j$ are admissible). They were able to obtain the best (i.e. largest) constant, a characterization of maximizers and, as a delightful application, were able to re-derive the Pr\'ekopa--Leindler inequality via a clever limiting argument.

Nowadays the theory of the Brascamp--Lieb inequality is well developed and finds applications across a vastly diverse range of fields. As but one example, we mention again the close  link to multilinear Kakeya-type inequalities and decoupling theory for the Fourier transform, both of which have found staggering applications in the last 15 years to problems in harmonic analysis, geometric analysis, dispersive PDE, and number theory; see, for example \cite{BourgainJAMS, BDG, BourgainGuth, GZ, GZK}. Whilst it feels somewhat discourteous not to include details of other kinds of applications of the Brascamp--Lieb inequality (and its variants), there are a number of papers which already contain thorough discussions of this nature and we encourage the reader to try \cite{BBRIMS, BBBCF, DT, Gardner, Zhang}.

Unlike (non-boundary cases of) the Young convolution inequality, the classes of maximizers for the multilinear H\"older and Loomis--Whitney inequalities are of a wider nature. Nevertheless, it is the case that isotropic centred gaussians are amongst the maximizers for each of these inequalities and this naturally raises the question of whether this is a general phenomenon for \eqref{e:FBL}. Remarkably, Lieb \cite{Lieb} established that centred gaussian \emph{near maximizers} always exist for \eqref{e:FBL}. We state this result next, along with a ``localized" version incorporating a centred gaussian weight factor. For this, we introduce a little more notation.

We extend the notion of Brascamp--Lieb datum to the triple $(\mathbf{B},\mathbf{c},\mathcal{Q})$, where $\mathcal{Q} : \mathbb{R}^n \to \mathbb{R}^n$ is a given self-adjoint transformation. Associated to such a datum is the Brascamp--Lieb constant 
\[
{\rm F}(\mathbf{B},\mathbf{c},\mathcal{Q}) = \sup_{\mathbf{f}} {\rm BL}(\mathbf{B},\mathbf{c},\mathcal{Q};\mathbf{f}) \in (0,\infty]
\]
which is given in terms of the Brascamp--Lieb functional
\[
{\rm BL}(\mathbf{B},\mathbf{c},\mathcal{Q};\mathbf{f}) = \frac{\int_{\mathbb{R}^n} e^{-\pi \langle x, \mathcal{Q}x\rangle} \prod_{j=1}^mf_j(B_jx)^{c_j}\, dx}{\prod_{j=1}^m ( \int_{\mathbb{R}^{n_j}} f_j )^{c_j}}.
\]
In the above, the supremum is taken over measurable $f_j :\mathbb{R}^{n_j} \to \mathbb{R}_+$ such that $\int_{\mathbb{R}^{n_j}} f_j \in (0,\infty)$. Also, associated to a positive definite transformation $A$ on a given euclidean space, we define the (normalized) centred gaussian $g_A$ by
\begin{equation}\label{e:Gaussian}
g_A(x):= ({\rm det}\, A)^{1/2} e^{-\pi \langle x,Ax\rangle}. 
\end{equation} 
On gaussian inputs $\mathbf{f}$ with $f_j = g_{A_j}$ for each $j = 1,\ldots,m$, we slightly abuse notation for the Brascamp--Lieb functional and write 
\[
{\rm BL}(\mathbf{B},\mathbf{c},\mathcal{Q};\mathbf{A}) := {\rm BL}(\mathbf{B},\mathbf{c},\mathcal{Q};\mathbf{f}).
\]

\begin{theorem}[Lieb] \label{t:Lieb}
Let $(\mathbf{B},\mathbf{c})$ be a Brascamp--Lieb datum. Then we have
\[
{\rm F}(\mathbf{B}, \mathbf{c})
=
\sup_{\mathbf{A}} {\rm BL}(\mathbf{B},\mathbf{c};\mathbf{A}),
\]
where the supremum is taken over positive definite transformations $A_j$.
More generally
\begin{equation}\label{e:Lieb}
{\rm F}(\mathbf{B}, \mathbf{c},\mathcal{Q})
=
\sup_{\mathbf{A}} {\rm BL}(\mathbf{B},\mathbf{c},\mathcal{Q};\mathbf{A}) 
\end{equation}
whenever  $\mathcal{Q} : \mathbb{R}^n \to \mathbb{R}^n$ is positive semi-definite.
\end{theorem}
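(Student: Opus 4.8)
The plan is to prove Theorem~\ref{t:Lieb} by the heat-flow method, following the strategy that Bennett--Carbery--Christ--Tao developed for the regularized forward inequality (Theorem~\ref{t:BCCT_genLieb}), and then to pass from the regularized statement to Lieb's theorem as stated. Since only ``$\le$'' in \eqref{e:Lieb} is non-trivial (gaussian inputs trivially give a lower bound on the supremum over all $\mathbf{f}$), I would fix a tuple of admissible inputs $\mathbf{f} = (f_j)_{j=1}^m$ with $0 < \int_{\mathbb{R}^{n_j}} f_j < \infty$ and show that ${\rm BL}(\mathbf{B},\mathbf{c},\mathcal{Q};\mathbf{f}) \le \sup_{\mathbf{A}} {\rm BL}(\mathbf{B},\mathbf{c},\mathcal{Q};\mathbf{A})$. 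By a routine density and truncation argument (multiplying $f_j$ by a tall thin gaussian and mollifying), it suffices to treat the case where each $f_j$ is a smooth, strictly positive Schwartz-type function, or even a finite positive combination of gaussians; I would isolate this reduction as a first step so that all the subsequent differentiations under the integral sign are justified.

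The core of the argument is to let each $f_j$ evolve under the heat semigroup: set $f_j(t,\cdot) = e^{t\Delta} f_j$ (with a suitable normalization of the Laplacian, say $\Delta/4\pi$, chosen so that gaussians evolve through the family $g_A$ cleanly), note that $\int_{\mathbb{R}^{n_j}} f_j(t,\cdot) = \int_{\mathbb{R}^{n_j}} f_j$ is conserved, and define
\[
Q(t) = \int_{\mathbb{R}^n} e^{-\pi \langle x, \mathcal{Q}x\rangle} \prod_{j=1}^m f_j(t, B_j x)^{c_j}\, dx.
\]
The key computation is to show that $Q$ is monotone non-decreasing in $t$: differentiating, using the heat equation to replace $\partial_t f_j$ by a second-order spatial derivative, integrating by parts in $x$, and organizing the resulting quadratic form, one arrives at a pointwise inequality of Cauchy--Schwarz / Brascamp--Lieb-for-gaussians type. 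Here I would introduce the ``pressure'' functions $p_j = -\log f_j(t,\cdot)$ (or work directly with $\nabla f_j / f_j$) so that the integrand of $Q'(t)$ becomes, up to the common weight $e^{-\pi\langle x,\mathcal{Q}x\rangle}\prod f_j^{c_j}$, a quadratic form in the gradients $(\nabla \log f_j)(t, B_j x)$ and the Hessians; the claim $Q'(t)\ge 0$ reduces to nonnegativity of this quadratic form, which is exactly the infinitesimal/gaussian form of the inverse Brascamp--Lieb inequality. Then, since $f_j(t,\cdot)$ converges (after rescaling) to a gaussian as $t\to\infty$ while preserving total mass, $\liminf_{t\to\infty} Q(t) \le \sup_{\mathbf{A}} {\rm BL}(\mathbf{B},\mathbf{c},\mathcal{Q};\mathbf{A})$, and combined with $Q(0) \le \lim_{t\to\infty} Q(t)$ this yields the desired bound on ${\rm BL}(\mathbf{B},\mathbf{c},\mathcal{Q};\mathbf{f})$. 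Dividing by $\prod_j (\int f_j)^{c_j}$ and taking the supremum over $\mathbf{f}$ completes the proof of \eqref{e:Lieb}; the first (unweighted) assertion is the special case $\mathcal{Q} = 0$, or follows by letting $\mathcal{Q} \to 0^+$.

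The main obstacle I anticipate is the monotonicity step $Q'(t) \ge 0$ in the presence of a genuinely indefinite-looking structure: because $\mathcal{Q}$ need only be positive semi-definite and is coupled to the $B_j$ through the geometry, the quadratic form produced by integration by parts is not manifestly nonnegative, and one must exploit the interplay between $\mathcal{Q}$, the $B_j$ and the $c_j$ to diagonalize or complete the square correctly. A secondary technical point is the convergence/normalization as $t\to\infty$: one must track the rescaling that sends $e^{t\Delta} f_j$ to a centred gaussian and verify that the limiting value of $Q(t)$ is genuinely of the form ${\rm BL}(\mathbf{B},\mathbf{c},\mathcal{Q};\mathbf{A})$ for some positive definite $\mathbf{A}$ (and handle the possibility that the natural limiting gaussians degenerate, which forces a compactness or exhaustion argument). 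I would handle the monotonicity by first recording the gaussian-input identity for ${\rm BL}(\mathbf{B},\mathbf{c},\mathcal{Q};\mathbf{A})$ explicitly, differentiating it in the gaussian parameters, and recognizing the resulting expression as precisely the quadratic form appearing in $Q'(t)$, thereby reducing the whole matter to the already-understood gaussian case.
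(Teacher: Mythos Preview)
Your proposal has a genuine gap at the monotonicity step. With the isotropic heat flow $e^{t\Delta}$ you describe, the quadratic form you obtain for $Q'(t)$ is (up to the common weight) essentially
\[
\sum_{j=1}^m c_j |\mathbf{v}_j|^2 \;-\; \bigg|\sum_{j=1}^m c_j B_j^*\mathbf{v}_j\bigg|^2,
\]
and this is nonnegative for all $\mathbf{v}_j$ only when the data are \emph{geometric} in the sense of \eqref{e:GeoCondRev}. For arbitrary $(\mathbf{B},\mathbf{c},\mathcal{Q})$ the sign simply fails. The cure in \cite{BCCT} is to run \emph{anisotropic} flows $\partial_t u_j = \tfrac{1}{4\pi}\mathrm{div}(A_j^{-1}\nabla u_j)$, but then the required pointwise inequality $\sum_j c_j\langle \mathbf{v}_j,A_j^{-1}\mathbf{v}_j\rangle \ge \langle \overline{\mathbf{w}},M^{-1}\overline{\mathbf{w}}\rangle$ only holds when $A_j^{-1}\ge B_j M^{-1}B_j^*$ for each $j$, i.e.\ when $\mathbf{A}$ is a gaussian \emph{extremizer}. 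Such an extremizer need not exist, so a second major step (an approximation by gaussian-extremizable data, via ``localized'' data in \cite{BCCT}) is indispensable. Your proposal does not supply either the correct anisotropic flow or the approximation step; the suggestion to ``recognize the quadratic form as the gaussian case'' is circular, because the gaussian case of Lieb's theorem is a tautology and does not produce the needed pointwise inequality.

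For comparison, the present paper does not prove Theorem~\ref{t:Lieb} by a direct forward heat flow at all. It first establishes the \emph{inverse} result Theorem~\ref{t:QMain} (heat flow in the gaussian-extremizable case, Theorem~\ref{t:GkerGExt}, via the closure property Theorem~\ref{t:cp_IBL} and Lemma~\ref{l:Key2}; then approximation by amplifying data, Lemmas~\ref{l:GEQNega}--\ref{l:approx}). Lieb's theorem is then obtained in Section~\ref{subsection:GenLieb} by Wolff's trick: one augments the datum with $B_0=\mathrm{id}$, $f_0=\prod_j f_j^{c_j}$, exponents $\widetilde c_0=1+t$, $\widetilde c_j=-c_jt$, applies Theorem~\ref{t:QMain} to this inverse problem, and sends $t\to\infty$ to recover \eqref{e:Lieb} via \eqref{e:C(t)bound}. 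So the route here is inverse-BL $\Rightarrow$ forward-BL, rather than a self-contained forward monotonicity argument.
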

Lieb's theorem above was proved in full generality in \cite{Lieb}. Earlier, Brascamp and Lieb \cite{BL} had established certain special cases including, for instance, rank-one linear transformations $B_j$. A computation reveals that
\begin{equation*} 
{\rm BL}(\mathbf{B}, \mathbf{c}, \mathcal{Q}; \mathbf{A})
= \bigg( \frac{\prod_{j=1}^m ({\rm det}\, A_j)^{c_j}}{ {\rm det}\, (\mathcal{Q}+\sum_{j=1}^m c_j B_j^* A_jB_j)} \bigg)^{1/2}
\end{equation*}
if $\mathcal{Q}+\sum_{j=1}^m c_j B_j^* A_jB_j$ is positive definite (${\rm BL}(\mathbf{B}, \mathbf{c}, \mathcal{Q}; \mathbf{A}) =\infty$ otherwise) and therefore Lieb's theorem dramatically reduces the complexity of understanding the Brascamp--Lieb constant. For example, Lieb's theorem played a pivotal role in the proof in \cite{BBCF} of the continuity of the Brascamp constant $\mathbf{B} \mapsto {\rm F}(\mathbf{B},\mathbf{c})$ and consequently in recent developments in understanding nonlinear variants of the Brascamp--Lieb inequality in which the underlying transformations $B_j$ are allowed to be nonlinear \cite{BBBCF}.

We end this very brief overview of the Brascamp--Lieb inequality \eqref{e:FBL} by stating a theorem from \cite{BCCT, BCCT_MRL} which provides a characterization of the \emph{finiteness} of the Brascamp--Lieb constant (see also recent work of Gressman \cite{Gressman} for new perspectives in this direction). 
\begin{theorem}[Bennett--Carbery--Christ--Tao] \label{t:BCCT_finiteness}
Let $(\mathbf{B},\mathbf{c})$ be a Brascamp--Lieb datum. Then ${\rm F}(\mathbf{B}, \mathbf{c})$ is finite if and only if
\begin{equation} \label{e:BLscaling}
n = \sum_{j=1}^m c_jn_j
\end{equation}
and
\begin{equation} \label{e:BLdimension}
\dim V \leq \sum_{j=1}^m c_j \dim B_jV \quad \text{for all subspaces $V$ of $\mathbb{R}^n$.}
\end{equation}
\end{theorem}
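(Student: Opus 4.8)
The plan is to establish the two directions separately. For the necessity of \eqref{e:BLscaling} and \eqref{e:BLdimension}, I would exploit the scaling and tensorization structure of the Brascamp--Lieb functional. First, testing \eqref{e:FBL} on a single common scaling $f_j(y) = f_j^{(0)}(y)$ composed with a dilation --- equivalently, replacing each $f_j$ by $f_j(\lambda \, \cdot\,)$ and tracking how numerator and denominator scale in $\lambda$ --- forces the homogeneity identity $n = \sum_j c_j n_j$; if it fails, sending $\lambda \to 0$ or $\lambda \to \infty$ makes ${\rm BL}$ blow up. For the subspace condition, given a subspace $V \subseteq \mathbb{R}^n$, I would take each $f_j$ to be (an approximation of) the indicator of a thin slab of width $\delta$ around $B_j V$ inside $\mathbb{R}^{n_j}$, i.e. essentially $\mathbf 1_{B_jV} \otimes \mathbf 1_{[0,\delta]^{n_j - \dim(B_jV)}}$ in suitable coordinates. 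The numerator is then bounded below by a constant times $\delta^{\,n - \dim V}$ coming from the directions transverse to $V$, while the denominator is a constant times $\delta^{\sum_j c_j (n_j - \dim(B_jV))}$; letting $\delta \to 0$ and using the already-established scaling identity converts finiteness of ${\rm F}(\mathbf B,\mathbf c)$ into exactly \eqref{e:BLdimension}.

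For the sufficiency direction --- that \eqref{e:BLscaling} and \eqref{e:BLdimension} imply ${\rm F}(\mathbf B,\mathbf c) < \infty$ --- I would invoke Lieb's theorem (Theorem \ref{t:Lieb}) to reduce to gaussian inputs: it suffices to show
\[
\sup_{\mathbf A} \frac{\prod_{j=1}^m ({\rm det}\, A_j)^{c_j}}{{\rm det}\,\big(\sum_{j=1}^m c_j B_j^* A_j B_j\big)} < \infty ,
\]
the supremum over positive definite $A_j$. This is now a purely linear-algebraic problem. The natural route is an induction on $n$: if equality holds in \eqref{e:BLdimension} for some proper nonzero subspace $V$ (a ``critical subspace''), one splits $\mathbb{R}^n$ along $V$ and its orthogonal complement, checks that the restricted data on $V$ and on $\mathbb{R}^n/V$ again satisfy the scaling and dimension conditions, and factors the gaussian Brascamp--Lieb quantity as a product of the two lower-dimensional ones via a block-triangular (Schur complement) computation of the determinant. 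If no proper critical subspace exists --- the ``simple'' case --- one argues directly that the gaussian quantity is bounded, e.g. by a compactness/normalization argument on the set of admissible $\mathbf A$ modulo the scaling symmetry, using strict inequality in \eqref{e:BLdimension} to rule out degeneration of $\sum_j c_j B_j^* A_j B_j$ toward a singular matrix.

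The main obstacle I anticipate is the sufficiency direction, and within it the bookkeeping of the critical-subspace induction: one must verify that the factorization of the gaussian functional is exact (not merely an inequality), that both pieces of the split datum inherit \eqref{e:BLscaling} and \eqref{e:BLdimension}, and that in the simple (no critical subspace) case the relevant determinant is genuinely bounded away from $0$ after quotienting by scaling --- this last point is where strictness of \eqref{e:BLdimension} must be used quantitatively, and it is delicate because the $c_j$ need not be integers and the $B_j$ need not be surjective. I would handle the degeneration analysis by passing to a subsequence of near-extremal $\mathbf A$, extracting a limiting subspace on which the eigenvalues of the $A_j$ blow up, and showing that subspace would violate \eqref{e:BLdimension} unless it is trivial. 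The necessity direction, by contrast, should be routine once the slab test functions are set up carefully.
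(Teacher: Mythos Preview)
The paper does not give its own proof of this theorem: it is stated as background and attributed to \cite{BCCT, BCCT_MRL}, so there is no in-paper argument to compare against. Your sketch is essentially the original Bennett--Carbery--Christ--Tao strategy --- scaling and slab test functions for necessity, Lieb's theorem plus induction on critical subspaces for sufficiency --- and is correct in outline; the delicate point you flag (ruling out degeneration in the ``simple'' case by extracting a limiting subspace that would violate the strict inequality) is exactly where the work lies in \cite{BCCT}.
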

The above result along with Lieb's theorem form two pillars in the theory of the Brascamp--Lieb inequality. The scaling condition \eqref{e:BLscaling} is easily shown to be necessary for the Brascamp--Lieb constant to be finite. Also, as further necessary conditions for finiteness, by considering $V = \mathbb{R}^n$ and $V = \bigcap_{j=1}^m \ker B_j$, one obtains from \eqref{e:BLscaling} and \eqref{e:BLdimension} that each $B_j$ must be surjective and that $\bigcap_{j=1}^m \ker B_j = \{0\}$; such data are usually referred to as \emph{non-degenerate}.

\begin{remark}
Maldague's regularized version of \eqref{e:FBL} quantifies the finiteness characterization in Theorem \ref{t:BCCT_finiteness}. In particular, it is shown in \cite{Maldague} that when the integral on the left-hand side of \eqref{e:FBL} is truncated to a ball of radius $R > 0$ and the input functions are restricted to be constant on cubes in a lattice of size $r \in (0,R)$, then the constant behaves like $R^\kappa r^{-\widetilde{\kappa}}$, where $\kappa = \sup_{V \leq \mathbb{R}^n} (\dim V - \sum_{j=1}^m c_j \dim B_jV)$ and $\widetilde{\kappa} := \kappa - (n - \sum_{j=1}^m c_jn_j)$. On the other hand, the regularized version of \eqref{e:FBL} proved in \cite[Corollary 8.15]{BCCT} may be viewed as an extension of Lieb's theorem, and our main result (Theorem \ref{t:QMain} below) is an extension of the inverse version of Lieb's theorem.
\end{remark}

\subsection{The inverse Brascamp--Lieb inequality} 
For the inverse Brascamp--Lieb inequality \eqref{e:IBL}, it is appropriate to introduce a different notion of non-degeneracy compared with the forward version of the inequality. The non-degeneracy condition was identified by Barthe--Wolff \cite{BW} and is as follows. Assume each $B_j : \mathbb{R}^n \to \mathbb{R}^{n_j}$ is a surjection, and $\mathbf{c}$ is arranged so that 
\[
c_1,\ldots,c_{m_+} >0 > c_{m_++1},\ldots, c_m 
\] 
for some $0\le m_+ \le m$, where we interpret the case $m_+ = 0$ to mean that all of $c_1,\ldots, c_m$ are negative. Correspondingly, we define the linear transformation $\mathbf{B}_+: \mathbb{R}^n \to \oplus_{j=1}^{m_+}\mathbb{R}^{n_j}$ by
$$
\mathbf{B}_+x := (B_1x,\ldots, B_{m_+}x). 
$$
Then, if we denote the number of positive eigenvalues of the self-adjoint transformation $\mathcal{Q} : \mathbb{R}^n \to \mathbb{R}^n$ by $s^+(\mathcal{Q})$, the Brascamp--Lieb datum\footnote{Strictly speaking, this terminology has already been used for the forward Brascamp--Lieb inequality where we imposed the condition $c_j > 0$ for all $j$. It would be more accurate to use terminology such as ``inverse Brascamp--Lieb datum", but it will always be clear from the context which notion of Brascamp--Lieb datum is being used.} $(\mathbf{B},\mathbf{c},\mathcal{Q})$ is said to be \emph{non-degenerate} if 
\begin{equation}\label{e:NondegBW}
\mathcal{Q}|_{{\rm ker}\, \mathbf{B}_+} > 0 \qquad \text{and} \qquad n \ge s^+(\mathcal{Q}) +\sum_{j=1}^{m_+} n_j 
\end{equation}
hold. Here $\mathcal{Q}|_{{\rm ker}\, \mathbf{B}_+}$ is the restriction of $\mathcal{Q}:\mathbb{R}^n\to\mathbb{R}^n$ to the subspace ${\rm ker}\, \mathbf{B}_+$. Note that if $\mathcal{Q} = 0$ then the non-degeneracy condition \eqref{e:NondegBW} is equivalent to fact that $\mathbf{B}_+$ is a bijection; in other words, 
\begin{equation}\label{e:DefNondeg}
\bigcap_{j=1}^{m_+} {\rm ker}\, B_j = \{0\}  \qquad \text{and} \qquad  {\rm Im}\, \mathbf{B}_+ = \bigoplus_{j=1}^{m_+} \mathbb{R}^{n_j}. 
\end{equation}

Next, in analogy with the notation introduced above for the forward Brascamp--Lieb inequality, we define 
\[
{\rm I}(\mathbf{B}, \mathbf{c},\mathcal{Q}) := \inf_{\mathbf{f}} {\rm BL}(\mathbf{B}, \mathbf{c},\mathcal{Q}; \mathbf{f})
\]
to be the best (i.e. largest) constant for which \eqref{e:IBL} holds, where the infimum is taken over measurable $f_j :\mathbb{R}^{n_j} \to \mathbb{R}_+$ such that $\int_{\mathbb{R}^{n_j}} f_j \in (0,\infty)$. Strictly speaking, we are extending our definition of the Brascamp--Lieb functional $\mathbf{f} \mapsto {\rm BL}(\mathbf{B}, \mathbf{c},\mathcal{Q}; \mathbf{f})$ to the case of real exponents $\mathbf{c} \subseteq \mathbb{R}^m$, which we do with the understanding that $0 \cdot \infty = 0$. 

It is not immediately apparent that \eqref{e:NondegBW} is a natural non-degeneracy condition to impose and we refer the reader to the careful discussion in \cite[Section 2]{BW} for the details. Here we simply extract from \cite[Section 2]{BW} that ${\rm I}(\mathbf{B}, \mathbf{c},\mathcal{Q}) = 0$ or $\infty$ when the non-degeneracy condition is dropped. 

The following inverse version of Lieb's theorem was proved in \cite[Theorem 2.9]{BW}.
\begin{theorem}[Barthe--Wolff] \label{t:BWQ}
For any non-degenerate Brascamp--Lieb datum $(\mathbf{B},\mathbf{c},\mathcal{Q})$, we have 
$$
{\rm I}(\mathbf{B},\mathbf{c},\mathcal{Q}) = \inf_{\mathbf{A}} {\rm BL}(\mathbf{B}, \mathbf{c},\mathcal{Q}; \mathbf{A}).
$$
\end{theorem}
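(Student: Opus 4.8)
The plan is to establish the nontrivial inequality ${\rm I}(\mathbf{B},\mathbf{c},\mathcal{Q}) \ge \inf_{\mathbf{A}} {\rm BL}(\mathbf{B},\mathbf{c},\mathcal{Q};\mathbf{A})$; the reverse inequality is immediate since centred gaussians form a subclass of the admissible inputs. Thus, given any admissible $\mathbf{f}$, I want to produce a positive definite gaussian datum $\widetilde{\mathbf{A}}$ with ${\rm BL}(\mathbf{B},\mathbf{c},\mathcal{Q};\mathbf{f}) \ge {\rm BL}(\mathbf{B},\mathbf{c},\mathcal{Q};\widetilde{\mathbf{A}})$. Following the heat-flow philosophy of Carlen--Lieb--Loss and Bennett--Carbery--Christ--Tao, as adapted to the inverse setting by Chen--Dafnis--Paouris, the idea is to deform each $f_j$ through a one-parameter family $u_j(t,\cdot)$, equal to $f_j$ at $t=0$ and converging as $t\to\infty$ to a centred gaussian, in such a way that the Brascamp--Lieb functional evaluated along the deformation is monotone in the favourable direction.

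Because the factors $f_j^{c_j}$ with $c_j<0$ sit, in effect, in the denominator, the monotonicity computation is delicate for those indices and only works for sufficiently regular inputs; I would therefore first prove the bound when each $f_j$ is \emph{regularized} — concretely, when $f_j$ is the evolution at some positive time of a positive finite Borel measure under a heat-type semigroup, so that $f_j$ is smooth with controlled growth and log-Hessian bounds — and then recover the general case by a limiting argument: replace $f_j$ by a regularized approximant $f_j^{(\varepsilon)}$ of the same mass, apply the regularized result, and send $\varepsilon\to0$. Care is needed here, since one must rule out escape of mass and degeneration of the emergent gaussian datum, and the convergence of the $n$-dimensional integral must be controlled from the correct side.

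For the core deformation, fix a positive definite datum $\mathbf{A}$ and let $u_j(t,\cdot)$ solve a heat-type equation $\partial_t u_j = \mathcal{L}_j u_j$ whose generator is built from $A_j$ but, crucially, adapted to the gaussian weight $\mathcal{Q}$ — incorporating a linear drift, or equivalently built from a Mehler-type kernel, with diffusion directions and signs coordinated with the $\pm$ split of $\mathbf{c}$ — so that $u_j(t,\cdot)$ converges as $t\to\infty$ to a genuine centred gaussian rather than spreading to a flat profile. Writing $N(t) := \int_{\mathbb{R}^n} e^{-\pi\langle x,\mathcal{Q}x\rangle}\prod_{j=1}^m u_j(t,B_jx)^{c_j}\,dx$, with $\int_{\mathbb{R}^{n_j}} u_j(t,\cdot)$ either constant in $t$ or explicitly computable, one differentiates, uses the equations, and integrates by parts to express $\tfrac{d}{dt}$ of a suitably renormalized ${\rm BL}(\mathbf{B},\mathbf{c},\mathcal{Q};\mathbf{u}(t))$ as an integral, against the nonnegative density $e^{-\pi\langle x,\mathcal{Q}x\rangle}\prod_j u_j(t,B_jx)^{c_j}$, of a quadratic form in the gradients $\nabla\log u_j(t,B_jx)$ whose coefficient matrix is assembled from $\mathcal{Q}$, the $A_j$, and $\sum_j c_j B_j^*A_jB_j$. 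The non-degeneracy hypothesis \eqref{e:NondegBW} — after decomposing $\mathbb{R}^n$ along ${\rm ker}\,\mathbf{B}_+$ and the eigenspaces of $\mathcal{Q}$, and reducing to a normalized ``geometric'' position by composing with linear maps on the domain and targets in the spirit of Chen--Dafnis--Paouris — is exactly what permits a choice of the $A_j$ (and of the flow) making this quadratic form semidefinite of one sign, hence the renormalized functional monotone. Letting $t\to\infty$ yields ${\rm BL}(\mathbf{B},\mathbf{c},\mathcal{Q};\widetilde{\mathbf{A}})$ for an explicit $\widetilde{\mathbf{A}}$ via the gaussian formula recorded above, and letting $t\to0^+$ (legitimate for regularized inputs by dominated convergence) returns ${\rm BL}(\mathbf{B},\mathbf{c},\mathcal{Q};\mathbf{f})$; monotonicity then gives ${\rm BL}(\mathbf{B},\mathbf{c},\mathcal{Q};\mathbf{f}) \ge {\rm BL}(\mathbf{B},\mathbf{c},\mathcal{Q};\widetilde{\mathbf{A}}) \ge \inf_{\mathbf{A}} {\rm BL}(\mathbf{B},\mathbf{c},\mathcal{Q};\mathbf{A})$.

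The main obstacle, I expect, is precisely the design of the flow together with the sign of the monotonicity. Unlike the forward inequality treated by Bennett--Carbery--Christ--Tao — where the scaling identity $n=\sum_j c_jn_j$ makes the long-time limit of the naive heat flow automatically land on a gaussian datum and the Fisher-information-type computation has an unambiguous sign — here $\mathcal{Q}\neq0$ (which is forced, since one needs $\mathcal{Q}|_{{\rm ker}\,\mathbf{B}_+}>0$) causes the naive heat flow to degenerate as $t\to\infty$, and the negative exponents deprive the differentiated identity of any automatic sign. Getting the renormalized functional to be genuinely monotone therefore demands the right modified flow and a careful use of both parts of \eqref{e:NondegBW}; this is the heart of the matter and is where the extension beyond the geometric case of Chen--Dafnis--Paouris requires real work. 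A secondary difficulty is coordinating the two limits $t\to0^+$ and $t\to\infty$ with the passage from regularized to general inputs, uniformly enough that no mass is lost and the limiting gaussian datum is positive definite.
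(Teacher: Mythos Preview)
Your outline captures the broad heat-flow philosophy, but there is a genuine gap at the point where you assert that non-degeneracy ``is exactly what permits a choice of the $A_j$ (and of the flow) making this quadratic form semidefinite of one sign.'' In the paper's argument the monotonicity of the flow does \emph{not} follow from non-degeneracy alone: it requires that $\mathbf{A}$ satisfy the Euler--Lagrange type conditions $c_j(A_j^{-1}-B_j\widetilde{M}^{-1}B_j^*)\le 0$ together with $(A_j^{-1}-B_j\widetilde{M}^{-1}B_j^*)(G_j-A_j)=0$, and these are available precisely when $\mathbf{A}$ is a gaussian \emph{extremizer} for the constrained infimum (Lemma~\ref{l:mono_key}). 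General non-degenerate data need not be gaussian extremizable, so the flow argument cannot be run directly. The paper closes this gap not by reducing to geometric position --- which is not possible for arbitrary data by linear changes of variable --- but by an ``amplifying data'' trick: one augments the datum with an extra map $B_{m+1}=\mathrm{id}_{\mathbb{R}^n}$ carrying a sufficiently negative exponent, proves that such augmented data are always gaussian extremizable (Lemma~\ref{l:GEQNega}), and then removes the augmentation by a limiting argument (Lemma~\ref{l:approx}). Your sketch has no counterpart to this step.

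A second, smaller divergence: the paper does not use an Ornstein--Uhlenbeck/Mehler flow with drift to absorb the weight $e^{-\pi\langle x,\mathcal{Q}x\rangle}$. Instead it invokes the Barthe--Wolff decomposition $\mathcal{Q}=B_0^*\mathcal{Q}_+B_0-B_{m+1}^*\mathcal{Q}_-B_{m+1}$ (Proposition~\ref{p:BWdecomp}) and treats the two pieces as explicit gaussian inputs $u_0,u_{m+1}$ evolving under ordinary heat equations; the quantity $\mathfrak{Q}(t)=t^{-\alpha}\int\prod_{j=0}^{m+1}u_j(t,B_jx)^{c_j}\,dx$ then has the right large-$t$ asymptotics without any drift. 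Your proposed drift-based flow may be workable, but you would still need to produce the sign of the quadratic form, and for that you are back to needing a gaussian extremizer.
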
 
For completeness, we note that a counterpart to Theorem \ref{t:BCCT_finiteness} regarding the strict positivity of $\mathrm{I}(\mathbf{B},\mathbf{c},\mathcal{Q})$ was proved by Barthe--Wolff; see \cite[Theorem 1.5]{BW} for the case $\mathcal{Q} = 0$ and \cite[Theorem 8.9]{BW} for the general case.

\subsection{The forward-reverse Brascamp--Lieb inequality} \label{section:FRBL}
Next we introduce the broad class of inequalities first considered in \cite{LCCV} called \emph{forward-reverse Brascamp--Lieb inequalities}. To do so, we fix linear transformations $T_j : E \to E(j)$, $j = 1,\ldots,J$, with $E(j) = \mathbb{R}^{n(j)}$ and where the base space is given by $E = \oplus_{i=1}^I E_i$ with $E_i =  \mathbb{R}^{n_i}$, $i = 1,\ldots,I$. The collection of such linear transformations is written $\mathbf{T} = (T_j)_{j = 1}^J$, and also we introduce the notation $\pi_i$ for the orthogonal projection from $E$ to $E_i$. Finally, we fix two collections of positive real numbers $(d_i)_{i =1}^I$ and $(d(j))_{j =1}^J$, and write $\mathbf{d} = ((d_i)_{i=1}^I, (d(j))_{j=1}^J )$. 

The forward-reverse Brascamp--Lieb inequality associated with the datum $(\mathbf{T},\mathbf{d})$ takes the form
\begin{equation} \label{e:FRBLnokernel}
\prod_{i=1}^I \bigg( \int_{E_i} f_i \bigg)^{d_i}
		\le C\prod_{j=1}^J \bigg( \int_{E(j)} h_j \bigg)^{d(j)}
\end{equation}
for input functions $f_i : E_i \to \mathbb{R}_+$, $h_j : E(j) \to \mathbb{R}_+$ which satisfy 
\begin{equation} \label{e:FRBLnokernelf}
\prod_{i=1}^I f_i(\pi_i x)^{d_i} \le 
\prod_{j=1}^J h_j(T_jx)^{d(j)} \qquad \text{for all $x \in E$.}
\end{equation}
As described in \cite{CouLiu}, this framework encompasses the forward, reverse and inverse forms of the Brascamp--Lieb inequality in the case where there is no gaussian kernel. For example, taking $I = 1$ and $d_1 = 1$, it is clear that the optimal choice in \eqref{e:FRBLnokernelf} is $f = \prod_{j=1}^J (h_j \circ T_j)^{d(j)}$, in which case \eqref{e:FRBLnokernel} reduces to \eqref{e:FBL}. 

Also, Barthe's reverse form of the Brascamp--Lieb inequality, originating in \cite{Barthe1}, corresponds to the case $J = 1$ and $d(1) = 1$. In other words, given linear mappings $B_i : \mathbb{R}^n \to \mathbb{R}^{n_i}$ and positive real numbers $d_i$, $i = 1,\ldots,I$, the reverse Brascamp--Lieb inequality takes the form
\begin{equation} \label{e:reverseBarthe}
C \prod_{i=1}^I \bigg( \int_{\mathbb{R}^{n_i}} f_i \bigg)^{d_i} \leq \int_{\mathbb{R}^n} h_1
\end{equation}
for functions satisfying
\begin{equation}\label{e:CondRBarthe}
\prod_{i=1}^I f_i(x_i)^{d_i} \leq h_1\bigg( \sum_{i=1}^I d_i B_i^*x_i \bigg)  \qquad \text{for $(x_1,\ldots,x_I) \in \oplus_{i = 1}^I \mathbb{R}^{n_i}$.}
\end{equation}
As has been pointed out elsewhere, arguably it would be more natural to refer to inequalities of the form \eqref{e:reverseBarthe} as \emph{dual} (rather than reverse) Brascamp--Lieb inequalities in light of the fact that  
\begin{equation} \label{e:BLduality}
\mathrm{R}(\mathbf{B},\mathbf{d}) \mathrm{F}(\mathbf{B},\mathbf{d})  = 1,
\end{equation}
where $\mathrm{R}(\mathbf{B},\mathbf{d})$ denotes the best (i.e. largest) constant $C$ such that \eqref{e:reverseBarthe} holds under \eqref{e:CondRBarthe}. This remarkable fact was proved by Barthe in \cite{Barthe1}. 

We also remark that the Pr\'ekopa--Leindler inequality is the special case of \eqref{e:reverseBarthe} with $C=1$ obtained by taking $I=2$, $B_1 = B_2 = \mathrm{id}_{\mathbb{R}^n}$, and $d_1 + d_2 = 1$. From the Pr\'ekopa--Leindler inequality one can derive the famous Brunn--Minkowski inequality
\[
\mathrm{vol}_n(X + Y)^{1/n} \geq \mathrm{vol}_n(X)^{1/n} + \mathrm{vol}_n(Y)^{1/n}
\] 
for appropriate $X,Y \subseteq \mathbb{R}^n$. The stimulus to introduce the general form of the inequality \eqref{e:reverseBarthe} appears to have arisen from convex geometry and we refer the reader to \cite{BallJLMS, Barthe1, Barthesimplex} for further discussion and applications.

Finally, to deduce the inverse Brascamp--Lieb inequality, take a forward-reverse Brascamp--Lieb datum $(\mathbf{T},\mathbf{d})$ and set
\[
h_{J + 1}(x) = \prod_{i=1}^I f_i(\pi_i x)^{d_i}  \prod_{j=1}^J h_j(T_jx)^{-d(j)}.
\]
Then \eqref{e:FRBLnokernelf} holds if we replace $J$ by $J + 1$, take $T_{J + 1}$ to be the identity transformation, and $d_{J + 1} = 1$. The resulting inequality \eqref{e:FRBLnokernel} reduces to the inverse Brascamp--Lieb inequality
\begin{equation*} 
\prod_{i=1}^I \bigg( \int_{E^i} f_i \bigg)^{d_i} \prod_{j=1}^J \bigg( \int_{E(j)} h_j \bigg)^{-d(j)} \le C \int_E \prod_{i=1}^I f_i(\pi_i x)^{d_i}  \prod_{j=1}^J h_j(T_jx)^{-d(j)} \, dx.
\end{equation*}
With the non-degeneracy condition in mind (i.e. $\mathbf{B}_+$ is bijective), this framework is as general as the one presented in the previous section in the case of no gaussian kernel.\footnote{We also refer the reader to the Appendix for a more complete discussion along these lines.}

The analogue of Lieb's theorem holds in the setting of the forward-reverse Brascamp--Lieb inequality, as shown in \cite[Theorem 2]{LCCV}.
\begin{theorem}[Liu--Courtade--Cuff--Verd\'u] \label{t:LCCV}
For any datum $(\mathbf{T},\mathbf{d})$, if the input functions $f_i : E_i \to \mathbb{R}_+$, $h_j : E(j) \to \mathbb{R}_+$ satisfy \eqref{e:FRBLnokernelf}, then they also satisfy \eqref{e:FRBLnokernel} with constant $C$ given by
\[
\sup_{\substack{A_i > 0 \\ A(j) > 0}} \prod_{i=1}^I ({\rm det}\, A_i)^{-\frac{d_i}2} \prod_{j=1}^J ( {\rm det}\, A(j) )^{\frac{d(j)}2}. 
\]
\end{theorem}
The proof of Theorem \ref{t:LCCV} in \cite{LCCV} rests on an equivalent entropic formulation of the forward-reverse Brascamp--Lieb inequality\footnote{In the context of the forward Brascamp--Lieb inequality, duality with subadditivity of entropy can be found in \cite{CLL} and \cite{CC}.} and ideas similar to those in Geng--Nair \cite{GN} and Lieb's original proof of the gaussian saturation property for the forward Brascamp--Lieb inequality in \cite{Lieb}. A different proof of Theorem \ref{t:LCCV} was found by Courtade--Liu \cite{CouLiu}, again utilizing entropic duality, but incorporating ideas from Bennett--Carbery--Christ--Tao \cite{BCCT} and Lehec \cite{Lehec}. We also remark that \cite{CouLiu} embarked on a systematic study of the forward-reverse Brascamp--Lieb inequality and, for example, established a characterization of the finiteness of the constant in the spirit of Theorem \ref{t:BCCT_finiteness} (see \cite[Theorem 1.27]{CouLiu}) and extended Barthe's duality relation \eqref{e:BLduality} to all forward-reverse Brascamp--Lieb data (see \cite[Theorem 1.3]{CouLiu}).

\begin{remark}
Given the above discussion, it is clear that one may use Theorem \ref{t:LCCV} to deduce Theorem \ref{t:BWQ} in the case where there is no gaussian kernel. Surprisingly, the converse is true and this fact follows from a clever limiting argument due to Pawe\l{} Wolff which is explained in \cite[Section 4.1]{CouLiu}. In the presence of a gaussian kernel, as far as we are aware, a version of such an equivalence has not appeared in the literature (see \cite[Section 4]{CouLiu} for some partial results along these lines).
\end{remark}

In the following section we state our main result -- an extension of Theorem \ref{t:BWQ} to certain regularized input functions -- and some consequences. For example, we can recover all versions of the gaussian saturation property stated above (i.e. Theorems \ref{t:Lieb}, \ref{t:BWQ} and \ref{t:LCCV}). In fact, we present a framework\footnote{This framework was suggested to us by an anonymous referee to whom we are extremely grateful.} for the forward-reverse Brascamp--Lieb inequality which allows for gaussian kernels, and, in particular, we shall see that the gaussian saturation property in this framework is equivalent to that of Barthe--Wolff in Theorem  \ref{t:BWQ}. Despite this equivalence between forward-reverse and inverse Brascamp--Lieb inequalities, our heat-flow monotonicity approach seems best suited to the inverse Brascamp--Lieb inequality and so the inverse inequality should be regarded as the focal point of the paper. In fact, it is not at all clear to us whether one can expect a heat-flow monotonicity approach to succeed in the setting of the general forward-reverse Brascamp--Lieb inequality. Heat-flow arguments of a different nature have been successfully implemented in the case of the reverse Brascamp--Lieb inequality with geometric data -- see Barthe--Cordero-Erausquin \cite{BC-E} (rank one transformations) and Barthe--Huet \cite{BH} (general rank transformations), as well as papers by Borell \cite{Borell1, Borell2, Borell3} which seem to have been a source of inspiration for \cite{BC-E} and \cite{BH}. For example, it is shown in \cite[Theorem 4]{BH} that, for geometric data, the relation \eqref{e:CondRBarthe} is preserved if one replaces all functions with their evolution under classical heat flow $e^{t\Delta}$ \emph{for all} $t > 0$; the reverse Brascamp--Lieb inequality for geometric data then follows by taking a limit $t \to \infty$. It would be very interesting to see if such an approach can be extended to general data in the forward-reverse framework, but it is certainly not clear to us whether such an approach can be profitable in proving, say, Theorem \ref{t:RegFRBL}.

\section{Main results} \label{section:main}

\subsection{Regularized inverse Brascamp--Lieb inequalities}
From now on, we denote the class of $n\times n$ real and self-adjoint transformations by $S(\mathbb{R}^n)$ and 
\begin{equation*}
S_+(\mathbb{R}^n) := \{ A \in S(\mathbb{R}^n): A>0 \}
\end{equation*}
for the subclass of positive definite transformations. 

For $G\in S_+(\mathbb{R}^n)$ consider the solution $u : \mathbb{R}_+ \times \mathbb{R}^n \to \mathbb{R}_+$ to the heat equation 
\begin{equation}\label{e:HeatG}
\partial_t u = \frac1{4\pi} {\rm div}\, (G^{-1} \nabla u), \qquad  u(0) = \mu,
\end{equation}
where $\mu$ is a positive finite Borel measure with non-zero mass. Then we call $f(x)= u(1,x)$ as a \textit{type $G$ function}. Explicitly $u(1,x)$ can be written in convolution form
\[
u(1,x) = g_{G}\ast \mu(x) = (\det G)^{1/2} \int_{\mathbb{R}^n} e^{-\pi \langle x-y,G(x-y) \rangle} \, d\mu(y).
\]
We consider the inverse Brascamp--Lieb inequality restricted to regularized input functions of this type, and thus we introduce the notation
\begin{equation*}\label{e:TypeG}
\mathcal{T}(G):= \{ g_{G}\ast \mu : \text{$\mu$ is a positive finite Borel measure with non-zero mass} \}. 
\end{equation*}
Note that the class $\mathcal{T}(G)$ is clearly subset of the class of all non-negative and integrable functions, and its members are smooth and strictly positive. Also, it is not difficult to see that we have the nesting property
\begin{equation} \label{e:nesting}
G_1,G_2\in S_+(\mathbb{R}^n), \,\,  G_1\le G_2 \,\, \Rightarrow \,\, \mathcal{T}(G_1) \subseteq \mathcal{T}(G_2). 
\end{equation}
It also formally makes sense\footnote{For example, we know that $g_{\lambda {\rm id}}$ converges to the Dirac delta supported at the origin as $\lambda \to \infty$.} to view
\begin{equation}\label{Def:Typeinfty}
\mathcal{T}(\infty) = \bigg\{ f:\mathbb{R}^n\to \mathbb{R}_+ : \; \int_{\mathbb{R}^n} f < \infty \bigg\}
\end{equation}   
as the class of all inputs. 

For each Brascamp--Lieb datum $(\mathbf{B},\mathbf{c},\mathcal{Q})$ and $\mathbf{G} = (G_j)_{j=1}^m$, with $G_j \in S_+(\mathbb{R}^{n_j})$, we refer to $(\mathbf{B},\mathbf{c},\mathcal{Q},\mathbf{G})$ as a \textit{generalized Brascamp--Lieb datum}\footnote{When $\mathcal{Q} = 0$, we shall simply say that $(\mathbf{B},\mathbf{c},\mathbf{G})$ is a \textit{generalized Brascamp--Lieb datum}.} and define 
\begin{equation}\label{Def:GIBL}
{\rm I}(\mathbf{B}, \mathbf{c}, \mathcal{Q}, \mathbf{G})
:= 
\inf_{\mathbf{f} \in \mathcal{T}(\mathbf{G})} {\rm BL}(\mathbf{B},\mathbf{c},\mathcal{Q};\mathbf{f})
\end{equation}
to be the best (i.e. largest) constant such that \eqref{e:IBL} holds for input functions $f_j \in \mathcal{T}(G_j)$; accordingly, the notation $\mathbf{f} \in \mathcal{T}(\mathbf{G})$ means $f_j \in \mathcal{T}(G_j)$ for each $j=1,\ldots,m$.

For generalized Brascamp--Lieb data $(\mathbf{B},\mathbf{c},\mathcal{Q},\mathbf{G})$, we add a further condition to the non-degeneracy condition \eqref{e:NondegBW}. We shall say that  $(\mathbf{B},\mathbf{c},\mathcal{Q},\mathbf{G})$ is non-degenerate if \eqref{e:NondegBW} holds and
\begin{equation}\label{e:NondegG}
\mathcal{Q} + \sum_{j=1}^{m_+} c_j B_j^* G_j B_j >0.
\end{equation}
This appears to be a reasonable condition in the following sense. If $(\mathbf{B},\mathbf{c},\mathcal{Q})$ is non-degenerate (that is, \eqref{e:NondegBW} holds), then as discussed in the proof of \cite[Proposition 2.2]{BW}, there exists $A_1,\ldots,A_{m_+} > 0$ such that 
\begin{equation*}
\mathcal{Q} + \sum_{j=1}^{m_+} c_j B_j^* A_j B_j >0
\end{equation*}
and hence $(\mathbf{B},\mathbf{c},\mathcal{Q},\mathbf{G})$ is non-degenerate whenever $G_j \geq A_j$ for $j=1,\ldots,m_+$. From such considerations, our main result below recovers the Barthe--Wolff result in Theorem \ref{t:BWQ} as a limiting case; see the remark after Lemma \ref{l:limit} for further details. In addition, we note that $\inf_{\mathbf{A} \leq \mathbf{G}} \mathrm{BL}(\mathbf{B},\mathbf{c},\mathcal{Q};\mathbf{A}) = \infty$ if \eqref{e:NondegG} is not satisfied.
\begin{theorem}\label{t:QMain}
For any non-degenerate generalized Brascamp--Lieb datum $(\mathbf{B}, \mathbf{c},\mathcal{Q},\mathbf{G})$, we have
\begin{equation}\label{e:RevLiebG}
{\rm I} (\mathbf{B}, \mathbf{c},\mathcal{Q},\mathbf{G}) = \inf_{\mathbf{A} \leq \mathbf{G}} {\rm BL}(\mathbf{B},\mathbf{c},\mathcal{Q};\mathbf{A}).
\end{equation}
\end{theorem}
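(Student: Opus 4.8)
The plan is to run a heat-flow argument in the spirit of Bennett--Carbery--Christ--Tao and Chen--Dafnis--Paouris, but adapted to the inverse setting and to the regularized inputs $f_j \in T(G_j)$. First I would set up the monotone quantity: given inputs $f_j = g_{G_j} * \mu_j \in T(G_j)$, introduce the flow $f_j(t,\cdot) = g_{G_j/t} * \mu_j$ (equivalently, run the heat equation \eqref{e:HeatG} with data $\mu_j$ and read off the solution at time $t$), which interpolates from a near-delta at $t \to 0^+$ to the given input $f_j$ at $t=1$. One checks that $f_j(t,\cdot)$ is, up to normalization, a type $G_j/t$ function, and $f_j(t,\cdot) \to \mu_j$ weakly as $t \to 0^+$. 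Define the functional $\mathcal{F}(t)$ to be the ratio $\mathrm{BL}(\mathbf{B},\mathbf{c},\mathcal{Q}; \mathbf{f}(t,\cdot))$ (with the appropriate normalization of masses so the denominator is constant in $t$), and the aim is to show $\mathcal{F}$ is monotone in the direction that forces $\mathcal{F}(1) \ge \lim_{t\to 0^+}\mathcal{F}(t)$, and then to identify the limit at $t\to 0^+$ with a gaussian-type quantity of the form $\inf_{\mathbf{A}\le\mathbf{G}}\mathrm{BL}(\mathbf{B},\mathbf{c},\mathcal{Q};\mathbf{A})$.

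The core computation is the differentiation of $\log \mathcal{F}(t)$. Writing $u_j = f_j(t,\cdot)$, each solves $\partial_t u_j = \frac{1}{4\pi t^2}\,\mathrm{div}(G_j^{-1}\nabla u_j)$ (the $t^{-2}$ coming from the scaling $G_j \mapsto G_j/t$), and the numerator $N(t) = \int_{\mathbb{R}^n} e^{-\pi\langle x,\mathcal{Q}x\rangle}\prod_j u_j(B_jx)^{c_j}\,dx$ has $t$-derivative obtained by the chain rule, integration by parts, and completing the square. The upshot should be a pointwise inequality: after integration by parts, $\frac{d}{dt}\log\mathcal{F}(t)$ is controlled by a quadratic form in the gradients $\nabla\log u_j$ evaluated against the ``heat data'' matrices $G_j$, and one wants to bound this using that for each fixed $x$ the vectors $v_j := (4\pi)^{-1}\nabla\log u_j(B_jx)$ satisfy a Brascamp--Lieb-type inequality with gaussian inputs — indeed the log-concavity-type bound $\nabla^2 \log u_j \le G_j$ (or its scaled version), which is exactly where the hypothesis $\mathbf{A}\le\mathbf{G}$ enters: the Hessian of $\log$ of a type $G_j$ function is bounded above by $G_j$. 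Plugging these pointwise bounds into the gaussian inverse Brascamp--Lieb functional and using the sign pattern $c_1,\dots,c_{m_+}>0>c_{m_++1},\dots,c_m$ together with non-degeneracy \eqref{e:NondegBW} and \eqref{e:NondegG} should give monotonicity. The matching lower bound $\mathrm{I}(\mathbf{B},\mathbf{c},\mathcal{Q},\mathbf{G}) \le \inf_{\mathbf{A}\le\mathbf{G}}\mathrm{BL}(\mathbf{B},\mathbf{c},\mathcal{Q};\mathbf{A})$ is the easy direction: gaussians $g_{A_j}$ with $A_j \le G_j$ lie in $T(G_j)$ by the nesting property \eqref{e:nesting}, so they are admissible competitors in the infimum defining $\mathrm{I}(\mathbf{B},\mathbf{c},\mathcal{Q},\mathbf{G})$.

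To close the argument I would, for each fixed $\mathbf{f}\in T(\mathbf{G})$: (i) justify that $\mathcal{F}(t)$ is finite and differentiable for $t\in(0,1]$ — here the non-degeneracy condition \eqref{e:NondegG} guarantees the numerator integral converges for the regularized inputs, since near $t=0$ the inputs with positive exponents concentrate like gaussians of width $\sim G_j/t$ and the weight $\mathcal{Q} + \sum_{j\le m_+} c_j B_j^* G_j B_j > 0$ keeps things integrable, while $c_j<0$ factors only help; (ii) establish the monotonicity giving $\mathcal{F}(1) \ge \lim_{t\to 0^+}\mathcal{F}(t)$; and (iii) evaluate $\lim_{t\to 0^+}\mathcal{F}(t)$, which by a Laplace/stationary-phase analysis localizes to the gaussian computation and yields a bound from below by $\inf_{\mathbf{A}\le\mathbf{G}}\mathrm{BL}(\mathbf{B},\mathbf{c},\mathcal{Q};\mathbf{A})$; taking the infimum over $\mathbf{f}$ then gives ``$\ge$'' in \eqref{e:RevLiebG}. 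The main obstacle I anticipate is step (iii) combined with the sign bookkeeping in step (ii): in the inverse setting the gaussian functional is an infimum rather than a supremum, so the pointwise quadratic-form inequality has to go the ``right'' way simultaneously for the positive-exponent block (where one exploits $\nabla^2\log u_j \le G_j$) and the negative-exponent block (where the relevant monotonicity is reversed), and reconciling these — essentially showing the worst case in the pointwise estimate is attained by honest gaussians $A_j\le G_j$ — will require care with the variational structure, likely mirroring and extending the geometric-case analysis of Chen--Dafnis--Paouris \cite{CDP} and the linear-algebra reductions in Barthe--Wolff \cite{BW}.
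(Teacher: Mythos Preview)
Your proposal captures the heat-flow spirit but misses the structure that makes it work, and has a genuine gap at the heart of the argument.

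\textbf{Wrong endpoint, wrong flow.} You flow $f_j(t,\cdot)=g_{G_j/t}*\mu_j$ and send $t\to 0^+$, hoping the limit ``localizes to the gaussian computation'' via Laplace analysis. But at $t\to 0^+$ the inputs concentrate to the measures $\mu_j$, not to gaussians; there is no mechanism by which $\lim_{t\to 0^+}\mathcal{F}(t)$ becomes $\inf_{\mathbf{A}\le\mathbf{G}}\mathrm{BL}(\mathbf{B},\mathbf{c},\mathcal{Q};\mathbf{A})$. The paper does the opposite: it runs the flow \emph{forward} on $(1,\infty)$ driven by a carefully chosen $\mathbf{A}$ (not $\mathbf{G}$), namely $\partial_t u_j=\tfrac{1}{4\pi}\mathrm{div}(A_j^{-1}\nabla u_j)$ with $u_j(1)=f_j$, and uses the parabolic rescaling $t^{n_j/2}u_j(t,\sqrt{t}\,\cdot\,)\to(\int f_j)g_{A_j}$ so that the large-time limit of the (power-corrected) quantity is exactly $\mathrm{BL}(\mathbf{B},\mathbf{c},\mathcal{Q};\mathbf{A})$.

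\textbf{Missing the extremizer and its Euler--Lagrange conditions.} The monotonicity step is not a matter of ``Hessian bounded above by $G_j$'' (in fact Lemma~\ref{l:LiYau} is the \emph{lower} bound $D^2\log u_j\ge -2\pi G_j/t$). What drives the sign is the linear-algebraic inequality of Lemma~\ref{l:Key2}, \emph{together with} the conditions $c_j(A_j^{-1}-B_j\widetilde{M}^{-1}B_j^*)\le 0$ and $(A_j^{-1}-B_j\widetilde{M}^{-1}B_j^*)(G_j-A_j)=0$. These are shown in Lemma~\ref{l:mono_key} to be precisely the first-order conditions for $\mathbf{A}$ to be a gaussian minimizer over $\{\mathbf{A}\le\mathbf{G}\}$. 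Without selecting such an $\mathbf{A}$, your pointwise quadratic-form estimate will not close; flowing with $G_j$ gives no reason for the required sign.

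\textbf{Missing the reduction to the extremizable case.} Gaussian minimizers need not exist, so even with the correct flow one cannot run the argument directly. The paper handles this with an extra device: it augments the datum by an additional index with $B={\rm id}_{\mathbb{R}^n}$ and a sufficiently negative exponent (``amplifying data'', Definition~\ref{d:globalised_data}), proves compactness of gaussian minimizers for such data (Lemma~\ref{l:GEQNega}), and then passes back to the original datum by a limiting argument (Lemma~\ref{l:approx}). Your outline has no analogue of this step, and it is essential.
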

As we have already mentioned, we use a heat-flow approach to prove Theorem \ref{t:QMain}. This may be viewed as a significant extension of the analysis in \cite{CDP} which handled a special class of Brascamp--Lieb data (so-called geometric data -- see the discussion at the beginning of Section \ref{section:prelims}).

\subsection{Regularized forward-reverse Brascamp Lieb inequalities} \label{subsection:RegFRBL}

We shall see that Theorem \ref{t:QMain} yields a gaussian saturation property for a regularized version of the forward-reverse Brascamp--Lieb inequality with gaussian kernels. To present this application, as far as possible, we use similar notation to that in Section \ref{section:FRBL}. 

We fix a collection of linear transformations $\mathbf{T} = (T_j)_{j = 1}^J$. Here, $T_j : E \to E(j)$, $j = 1,\ldots,J$, with $E(j) = \mathbb{R}^{n(j)}$ and where the base space is given by $E = \oplus_{i=0}^I E_i$ with $E_i =  \mathbb{R}^{n_i}$, $i = 0,1,\ldots,I$. Next, we take two collections of positive real number $\mathbf{d} = ((d_i)_{i=1}^I, (d(j))_{j=1}^J )$. In addition, fix $Q_L\in S_+(E_0)$, $Q_R$ to be a positive semi-definite transformation on $E$, and write $Q= (Q_L,Q_R)$. Finally, we fix two collections of positive definite transformations $\mathbf{G}=( (G_i)_{i=1}^I, (G(j))_{j=1}^J )$, where $G_i \in S_+(E_i)$ and $G(j) \in S_+(E(j))$.

The datum $(\mathbf{T}, \mathbf{d}, Q,\mathbf{G} )$ is said to be non-degenerate if 
\begin{equation}\label{e:NondegFRBL-Nov12}
	E_0 \oplus \{0\} \oplus \cdots \oplus \{0\} \subseteq {\rm ker}\, Q_R,\quad \pi_0^* Q_L \pi_0 + \sum_{i=1}^I d_i \pi_i^* G_i \pi_i > Q_R \quad \textrm{on $E$,}
\end{equation}
where, as before, $\pi_i$ denotes the orthogonal projection from $E$ to $E_i$. The second condition is reasonable since it is necessary for \eqref{e:AssumpFRBL} below to hold with\footnote{In this setting it is slightly more convenient to  work with non-normalized gaussians.} $f_i = \gamma_{G_i}, h_j = \gamma_{G(j)}$, where $\gamma_A(x):= e^{-\pi\langle x, Ax\rangle}$. In fact, if the second condition in \eqref{e:NondegFRBL-Nov12} fails to be true, then there is no $A_i \le G_i, A(j) \le G(j)$ satisfying \eqref{e:AssumpFRBL} with $f_i = \gamma_{A_i}, h_j = \gamma_{A(j)}$. 
Indeed, \eqref{e:AssumpFRBL} with $f_i = \gamma_{A_i}, h_j = \gamma_{A(j)}$ is equivalent to 
\begin{equation}\label{e:GaussAssump}
	\pi_0^* Q_L \pi_0 
	+ \sum_{i=1}^I d_i \pi_i^* A_i \pi_i
	\ge 
	Q_R 
	+
	\sum_{j=1}^J d(j) T_j^* A(j) T_j
	\quad \textrm{on $E$,}
\end{equation}
which in particular implies 
$$
	\pi_0^* Q_L \pi_0 
	+ \sum_{i=1}^I d_i \pi_i^* A_i \pi_i
	>
	Q_R.
$$
Under such a non-degeneracy condition, we have the following generalization of Theorem \ref{t:LCCV}.
\begin{theorem}\label{t:RegFRBL}
	For any non-degenerate data $\mathfrak{D} = (\mathbf{T}, \mathbf{d}, Q,\mathbf{G} )$, we have the following. If $f_i \in \mathcal{T}(G_i)$ and $h_j \in \mathcal{T}(G(j))$ satisfy 
	\begin{equation}\label{e:AssumpFRBL}
		e^{ -\pi \langle \pi_0 x,Q_L \pi_0 x \rangle }
		\prod_{i=1}^I f_i(\pi_i x)^{d_i}
		\le 
		e^{ - \pi \langle x,Q_Rx\rangle } 
		\prod_{j=1}^J h_j(T_jx)^{d(j)} \qquad \text{for all $x \in E$,}
	\end{equation}
 then they also satisfy 
	\begin{equation}\label{e:ConsFRBL}
		\prod_{i=1}^I \bigg( \int_{E_i} f_i \bigg)^{d_i}
		\le {\rm FR}(\mathfrak{D}) \prod_{j=1}^J \bigg( \int_{E(j)} h_j \bigg)^{d(j)},
	\end{equation}
	where the constant is given by 
	$$
	{\rm FR}(\mathfrak{D}) =\sup\bigg\{ \prod_{i=1}^I ({\rm det}\, A_i)^{-\frac{d_i}2} \prod_{j=1}^J ( {\rm det}\, A(j) )^{\frac{d(j)}2} : A_i\le G_i, A(j)\le G(j)\;{\rm s.t.}\; \eqref{e:GaussAssump}\bigg\}. 
	$$
\end{theorem}
We shall deduce the above theorem from Theorem \ref{t:QMain} by means of Wolff's limiting argument alluded to at the end of Section \ref{section:intro}. In fact, the theorems are equivalent and we shall prove this in the Appendix. 

Capitalizing on the fact that the forward Brascamp--Lieb inequality is a special case of the forward-reverse Brascamp--Lieb inequality, we can quickly deduce the following generalized version of Lieb's theorem. 
\begin{theorem} \label{t:BCCT_genLieb}
For any generalized Brascamp--Lieb data $(\mathbf{B},\mathbf{c},\mathcal{Q},\mathbf{G})$ with $c_j > 0$ for each $j$, and positive semi-definite $\mathcal{Q}$, we have 
$$
\sup_{\mathbf{f} \in \mathcal{T}(\mathbf{G})} {\rm BL}(\mathbf{B},\mathbf{c}, \mathcal{Q};\mathbf{f})
= 
\sup_{\mathbf{A} \leq \mathbf{G}} {\rm BL}(\mathbf{B},\mathbf{c},\mathcal{Q};\mathbf{A}). 
$$
\end{theorem}
A limiting argument shows that Theorem \ref{t:BCCT_genLieb} implies Theorem \ref{t:Lieb}, so in this sense, we see that Lieb's result is also a consequence of Theorem \ref{t:QMain}. Also, the case $\mathcal{Q} = 0$ was proved by Bennett--Carbery--Christ--Tao \cite[Corollary 8.11]{BCCT} and we shall follow the approach taken in \cite{BCCT} in proving Theorem \ref{t:QMain}.

In a similar manner, one may also quickly obtain a regularized version of Barthe's reverse Brascamp--Lieb inequality \eqref{e:reverseBarthe} from Theorem \ref{t:RegFRBL} (by taking $J = 1$ and $d(1) = 1$).

\begin{remark}
Valdimarsson \cite{V} obtained a regularized version of \eqref{e:reverseBarthe} in which the input functions took the form $f_j(x) = \exp(-\pi \langle x,G_j^{-1}x \rangle - H_j(x))$ with $H_j$ a convex function (so-called ``inverse class $G_j$"). This particular set-up appears to be independent from ours in Theorem \ref{t:RegFRBL}. We  remark that Valdimarsson's setting of inverse class $G_j$ input functions allowed him to extend the ideas of Barthe \cite{Barthe1} and obtain a generalization of the duality relation \eqref{e:BLduality} involving $\mathrm{F}(\mathbf{B},\mathbf{c},\mathbf{G})$. It is unclear to us whether a duality result is possible for the regularization we consider in Theorem \ref{t:RegFRBL} and it seems it may be natural to adapt the framework somehow to include inverse class $G$ functions in some appropriate way.

In addition, a certain regularized version of the forward-reverse Brascamp--Lieb inequality (in its dual entropic representation) was considered by Liu--Courtade--Cuff--Verd\'u in  \cite[Section 4]{LCCV}. Again, we believe that this has no direct connection with the kind of regularization that we study in the present paper.
\end{remark}

\begin{organization}
The remainder of the paper is primarily devoted to proving Theorem \ref{t:QMain}. Section \ref{section:prelims} contains several preliminary observations, mostly related to the heat-flow monotonicity approach that we will use to prove Theorem \ref{t:QMain}. Although the main body of the proof of Theorem \ref{t:QMain} is given in Section \ref{section:MainProof}, the key heat-flow result needed for the proof has been isolated in Theorem \ref{t:cp_IBL} in Section \ref{section:cp}; we take the opportunity to present this result in the form of a ``closure property" for sub/supersolutions to certain heat equations and thus contribute to the emerging theory of such closure properties in, for example, \cite{ABBMMS, BB, BBCrell}. After the proof of Theorem \ref{t:QMain} in Section \ref{section:MainProof}, in Section \ref{section:furtherapps} we present some further applications and remarks. Finally, in the Appendix, we prove the equivalence between Theorems \ref{t:QMain} and \ref{t:RegFRBL}.
\end{organization}

\section{Preliminaries} \label{section:prelims}

It will be convenient to introduce the notation
\[
{\rm I}_{\mathrm{g}}(\mathbf{B},\mathbf{c},\mathcal{Q},\mathbf{G}) = \inf_{\mathbf{A} \leq \mathbf{G}} \mathrm{BL}(\mathbf{B},\mathbf{c},\mathcal{Q};\mathbf{A}) 
\]
for the best constant such that \eqref{e:IBL} holds for gaussian input functions $f_j = g_{A_j}$ with $A_j \leq G_j$ for each $j$. When $\mathbf{G} = (\infty,\ldots,\infty)$ or $\mathcal{Q} = 0$ we simply omit from the above notation.

\subsection{Geometric data and a key linear algebraic lemma}

Before embarking on the full proof of Theorem \ref{t:QMain}, as a highly instructive preliminary first step, let us consider the so-called \emph{geometric} case and, for simplicity, we set $\mathcal{Q} = 0$ and $\mathbf{G} = (\infty,\ldots,\infty)$. The geometric condition on the data  is
\begin{equation} \label{e:GeoCondRev}
 B_jB_j^* = \mathrm{id} \quad (j=1,\ldots,m), \quad \text{and} \quad \sum_{j=1}^m c_j B_j^*B_j = \mathrm{id}.
\end{equation}
In a such a case, it is clear that $\ker \mathbf{B}_+ = \{0\}$ and it follows that the non-degeneracy condition \eqref{e:NondegBW} is equivalent to the surjectivity of $\mathbf{B}_+$.
\begin{theorem} \label{t:Geo}
Suppose the Brascamp--Lieb datum $ (\mathbf{B}, \mathbf{c})$ satisfies \eqref{e:GeoCondRev} and non-degenerate in the sense that $\mathbf{B}_+$ is surjective.
Then   
$$
{\rm I} (\mathbf{B},\mathbf{c}) = {\rm I}_{\mathrm{g}} (\mathbf{B},\mathbf{c}) = {\rm BL} (\mathbf{B},\mathbf{c};\mathbf{A}) = 1, 
$$
where $A_j$ is the identity transformation on $\mathbb{R}^{n_j}$ for $j=1,\ldots,m$.
\end{theorem}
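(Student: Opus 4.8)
The plan is to establish Theorem \ref{t:Geo} by proving the single inequality ${\rm I}(\mathbf{B},\mathbf{c}) \geq 1$, since the reverse bound and all the identifications are easy. Indeed, the geometric conditions \eqref{e:GeoCondRev} immediately give ${\rm BL}(\mathbf{B},\mathbf{c};\mathbf{A}) = 1$ when each $A_j = {\rm id}$: the gaussian formula for the Brascamp--Lieb functional gives $\big(\prod_j (\det A_j)^{c_j}\big)^{1/2} / \big(\det \sum_j c_j B_j^* A_j B_j\big)^{1/2} = 1/(\det {\rm id})^{1/2} = 1$. Consequently ${\rm I}_{\mathrm g}(\mathbf{B},\mathbf{c}) \leq {\rm BL}(\mathbf{B},\mathbf{c};\mathbf{A}) = 1$, and trivially ${\rm I}(\mathbf{B},\mathbf{c}) \leq {\rm I}_{\mathrm g}(\mathbf{B},\mathbf{c})$ since the gaussian infimum is over a subclass. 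So everything collapses to the assertion that \eqref{e:IBL} holds with $\mathcal{Q}=0$ and constant $C = 1$ for \emph{all} admissible inputs, i.e.
\[
\int_{\mathbb{R}^n} \prod_{j=1}^m f_j(B_jx)^{c_j}\, dx \;\geq\; \prod_{j=1}^m \Big( \int_{\mathbb{R}^{n_j}} f_j \Big)^{c_j}.
\]

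\textbf{Heat-flow monotonicity.} The natural route, in the spirit of \cite{CDP} and the heat-flow method the paper is built on, is to flow the inputs under the heat semigroup and prove that the Brascamp--Lieb quantity is monotone in the right direction. Concretely, after normalizing $\int f_j = 1$, set $u_j(t,\cdot) = e^{t\Delta/4\pi} f_j$ (standard heat flow on $\mathbb{R}^{n_j}$, so that $\int_{\mathbb{R}^{n_j}} u_j(t,\cdot) = 1$ for all $t$) and define
\[
Q(t) := \int_{\mathbb{R}^n} \prod_{j=1}^m u_j(t,B_jx)^{c_j}\, dx.
\]
As $t\to\infty$ the functions $u_j(t,\cdot)$ spread out and $Q(t) \to \prod_j (\int f_j)^{c_j} = 1$ (this limit has to be justified, e.g. by rescaling $u_j(t,x) = t^{-n_j/2} v_j(t, x/\sqrt t)$ with $v_j \to g_{\mathrm{id}}$-type profile, together with the geometric scaling relation $n = \sum_j c_j n_j$ which is forced here). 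At $t=0$ we have $Q(0) = \int \prod_j f_j(B_jx)^{c_j}\,dx$, the left side of \eqref{e:IBL}. Thus it suffices to show $Q'(t) \leq 0$, and then $Q(0) \geq \lim_{t\to\infty} Q(t) = 1$.

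\textbf{The derivative computation.} Differentiating under the integral, $Q'(t)$ splits into a sum over $j$ of terms involving $\partial_t u_j = \frac{1}{4\pi}\Delta u_j$. Writing $\rho_j = \nabla \log u_j$ (and using that $u_j > 0$, smooth, after a first regularization step), an integration by parts in $x\in\mathbb{R}^n$ converts $Q'(t)$ into an integral of $\prod_j u_j(B_jx)^{c_j}$ against a quadratic form in the pulled-back gradients $B_j^* \rho_j(B_jx)$. The geometric conditions $B_jB_j^* = {\rm id}$ and $\sum_j c_j B_j^* B_j = {\rm id}$ are exactly what is needed to show this quadratic form has a sign: one expects to arrive at an expression of the shape $-\int \prod_j u_j(B_jx)^{c_j}\, \langle \text{(positive-semidefinite form)}\, \mathbf{v}, \mathbf{v}\rangle\, dx$ after completing the square, where $\mathbf{v}$ records the $\rho_j(B_jx)$. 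Since all $c_j$ here can be negative, the sign bookkeeping must be done carefully, but \eqref{e:GeoCondRev} is precisely the algebraic input that makes the Cauchy--Schwarz / completion-of-square step go through with the correct orientation of the inequality — this is the point where surjectivity of $\mathbf{B}_+$ (the non-degeneracy hypothesis) also enters, to guarantee the relevant form is genuinely defined and the boundary terms in the integration by parts vanish.

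\textbf{Main obstacle.} The routine parts are the gaussian computation and the reduction to $Q' \le 0$; the real work is (i) justifying all the manipulations — finiteness of $Q(t)$ for $t>0$, legitimacy of differentiating under the integral and integrating by parts, which typically requires first replacing $f_j$ by a nice dense subclass (bounded, compactly supported, or already heat-regularized, using the nesting property \eqref{e:nesting} in spirit) and then removing the truncation by monotone/dominated convergence; and (ii) the $t\to\infty$ limit $Q(t)\to 1$, which needs the scaling identity $n=\sum_j c_j n_j$ (a consequence of $\sum_j c_j B_j^*B_j = {\rm id}$ together with $B_jB_j^* = {\rm id}$, by taking traces) and uniform control of the rescaled profiles. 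I expect the sign of $Q'(t)$ — i.e. extracting the positive-semidefinite quadratic form from the integration-by-parts identity using only \eqref{e:GeoCondRev} — to be the crux, and this is presumably the ``key linear algebraic lemma'' advertised in the subsection heading; the rest is careful but standard heat-flow bookkeeping.
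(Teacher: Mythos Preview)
Your proposal is correct and follows essentially the same heat-flow monotonicity argument the paper sketches: define $\mathfrak{Q}(t)=\int_{\mathbb{R}^n}\prod_j u_j(t,B_jx)^{c_j}\,dx$ with $u_j$ heat-evolved, show $\mathfrak{Q}(0)$ is the left side and $\lim_{t\to\infty}\mathfrak{Q}(t)=\prod_j(\int f_j)^{c_j}$, and compute $\mathfrak{Q}'(t)=\int U\big[\sum_j c_j|\mathbf{v}_j|^2-|\sum_j c_jB_j^*\mathbf{v}_j|^2\big]\,dx\le 0$ by the key pointwise inequality $|\sum_j c_jB_j^*\mathbf{v}_j|^2\ge\sum_j c_j|\mathbf{v}_j|^2$ (which is the special case of Lemma~\ref{l:Key2} you correctly anticipate as the ``key linear algebraic lemma''). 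One minor correction: the surjectivity of $\mathbf{B}_+$ enters precisely in proving this algebraic inequality (it guarantees the existence of the element $x_*$ in Lemma~\ref{l:Key2}), not in handling boundary terms.
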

Theorem \ref{t:Geo} was proved in Barthe--Wolff \cite[Theorem 4.7]{BW} based on a mass transportation argument.  A very closely related result was established using heat flow by Chen--Dafnis--Paouris in \cite[Theorem 2]{CDP}; in particular, it was shown that 
\begin{equation} \label{e:GeoRev}
\int_{\mathbb{R}^n} \prod_{j=1}^m f_j (B_jx)^{c_j}\, dx \ge \prod_{j=1}^m \bigg( \int_{\mathbb{R}^{n_j}} f_j \bigg)^{c_j}
\end{equation}
holds under the assumption that $B_jB_j^* = \mathrm{id}$ for each $j$, $n = \sum_{j=1}^m c_j n_j$ and 
\begin{equation} \label{e:CDP}
\mathbf{B} \mathbf{B}^* \geq C^{-1}
\end{equation}
hold, where $\mathbf{B} = (B_1,\ldots,B_m) : \mathbb{R}^n \to \prod_{j=1}^m \mathbb{R}^{n_j}$, and 
\[
C := \mathrm{diag}\, (c_1\mathrm{id}_{\mathbb{R}^{n_1}},\ldots,c_m\mathrm{id}_{\mathbb{R}^{n_m}}).
\]
We refer the reader to \cite[Section 4]{BW} for a precise clarification of how one may obtain \cite[Theorem 2]{CDP} from their work.

As observed in \cite{CDP}, assumption \eqref{e:CDP} is clearly equivalent to
\begin{equation}\label{e:GeoKey}
\bigg| \sum_{j=1}^m c_j B_j^*\mathbf{v}_j \bigg|^2  \ge  \sum_{j=1}^m c_j |\mathbf{v}_j|^2  \qquad \text{for all $\mathbf{v}_j \in \mathbb{R}^{n_j}$,}
\end{equation}
and this fact was key to their heat-flow proof of \eqref{e:GeoRev}. Here we exhibit a sketch proof of Theorem \ref{t:Geo}, following the heat flow argument used in \cite{CDP} (which itself is based on similar heat flow proofs of the forward Brascamp--Lieb inequality in \cite{BCCT}, \cite{CLL}, \cite{Vald_heat}), which will help to understand our argument in the case of more general non-degenerate Brascamp--Lieb data $(\mathbf{B},\mathbf{c},\mathbf{G},\mathcal{Q})$ in Section \ref{section:MainProof}. The geometric case is particularly well suited to a heat-flow argument since gaussian maximizers always exist in such a case (in fact, as stated above, isotropic gaussians are maximizers). Including some details in the specific case of geometric data will also motivate the crucial linear algebraic result (a generalization of \eqref{e:GeoKey} in the forthcoming Lemma \ref{l:Key2}) which underpins the heat-flow argument in our proof of Theorem \ref{t:QMain}.

\emph{A heat-flow approach to Theorem \ref{t:Geo}}.
From \eqref{e:GeoCondRev}, it is easy to check that  
$$
{\rm BL} (\mathbf{B},\mathbf{c};\mathbf{A}) = 1
$$ if each $A_j$ is the identity transformation, so it suffices to prove
\eqref{e:GeoRev} holds for sufficiently nice non-negative $f_j$. (We remark that identifying a sufficiently nice class of test functions will be taken up in Section \ref{subsection:nicef}.) To this end, we define the quantity 
\begin{equation} \label{e:GeoQ}
\mathfrak{Q}(t):= \int_{\mathbb{R}^n} U(t,x) \, dx, 
\end{equation}
where $u_j : (0,\infty) \times \mathbb{R}^{n_j} \to (0,\infty)$ solves the heat equation
\begin{equation} \label{e:isoheat}
\partial_t u_j = \Delta u_j, \qquad u_j(0) = f_j
\end{equation}
and $U := \prod_{j=1}^m (u_j \circ B_j)^{c_j}$. Formally, we have 
\begin{align*}
\mathfrak{Q}(0) & = \int_{\mathbb{R}^n} \prod_{j=1}^m f_j (B_jx)^{c_j}\, dx \\
\lim_{t \to \infty} \mathfrak{Q}(t) & = \prod_{j=1}^m \bigg( \int_{\mathbb{R}^{n_j}} f_j \bigg)^{c_j},
\end{align*}
where the argument for the limit at infinity made use of \eqref{e:GeoCondRev}. Thus our aim is to show that $\mathfrak{Q}$ is non-increasing in time. To this end, we note that
\begin{align*}
\partial_t U(t,x) & = U(t,x) \sum_{j=1}^m c_j \partial_t \log u_j (t,B_jx) \\
& = U(t,x) \sum_{j=1}^m c_j[|\mathbf{v}_j|^2 + {\rm div} (\mathbf{v}_j)](t,B_jx),
\end{align*}
where $\mathbf{v}_j := \nabla \log u_j$. By the divergence theorem, it follows (at least formally) that
\begin{align*}\label{e:Q'Geo}
\mathfrak{Q}'(t) = \int_{\mathbb{R}^n} U(t,x) \bigg[  \sum_{j=1}^m c_j |\mathbf{v}_j(t,B_jx)|^2 - \bigg| \sum_{j=1}^m c_j B_j^*\mathbf{v}_j(t,B_jx) \bigg|^2\bigg]\, dx.
\end{align*}
Thus, the argument above has reduced the proof of Theorem \ref{t:Geo} to showing \eqref{e:GeoKey}. This fact is a special case of Lemma \ref{l:Key2} below (with $A_j = {\rm id}$ for each  $j=1,\ldots,m$); the general case in Lemma \ref{l:Key2} will be crucial for the argument in Section \ref{section:MainProof} to prove Theorem \ref{t:QMain}. Before stating the lemma, we remark that in the case of the forward Brascamp--Lieb inequality with $c_j > 0$ for each $j$, for geometric data $(\mathbf{B},\mathbf{c})$ we have that \eqref{e:GeoKey} holds \emph{in reverse}; to see this, if we define 
\[
\overline{\mathbf{w}} := \sum_{j=1}^m c_jB_j^*\mathbf{v}_j,
\]
then by the Cauchy--Schwarz inequality and the geometric condition $\sum_{j=1}^m c_j B_j^*B_j = \mathrm{id}$, we get
\begin{align*}
|\overline{\mathbf{w}} |^2 = \sum_{j=1}^m \langle \sqrt{c_j}B_j\overline{\mathbf{w}},\sqrt{c_j}\mathbf{v}_j \rangle & \leq \bigg(\sum_{j=1}^m c_j |B_j\overline{\mathbf{w}}|^2 \bigg)^{1/2} \bigg(\sum_{j=1}^m c_j |\mathbf{v}_j|^2\bigg)^{1/2} \\
& = |\overline{\mathbf{w}} | \bigg(\sum_{j=1}^m c_j |\mathbf{v}_j|^2\bigg)^{1/2}
\end{align*}
which rearranges to give \eqref{e:GeoKey} in reverse. Interestingly, it was shown in \cite{BW} that one can obtain \eqref{e:GeoKey} from $\sum_{j=1}^m c_j |\mathbf{v}_j|^2 \le  | \sum_{j=1}^m c_j B_j^*\mathbf{v}_j |^2 $ ($c_j > 0$); more generally, a proof along such lines was used to derive the more general inequality in \eqref{e:GeneKey} below.
\begin{lemma}\label{l:Key2}
Suppose the Brascamp--Lieb datum $(\mathbf{B},\mathbf{c})$ is such that $\mathbf{B}_+$ is surjective, and $A_j \in S_+(\mathbb{R}^{n_j})$, $j = 1,\ldots,m$, are such that the transformation $M := \sum_{j=1}^m c_jB_j^* A_j B_j$ is positive definite.  Let $\mathbf{v}_j \in \mathbb{R}^{n_j}$ for $j=1,\ldots, m$, and let $x_* \in \mathbb{R}^n$ be any non-zero element of $\mathbf{B}_+^{-1}(A_1^{-1}\mathbf{v}_1,\ldots, A_{m_+}^{-1}\mathbf{v}_{m_+})$. Then we have the identity
\begin{equation}\label{e:GeneId}
\langle \overline{\mathbf{w}} , M^{-1} \overline{\mathbf{w}} \rangle -\sum_{j=1}^m c_j \langle \mathbf{v}_j, A_j^{-1} \mathbf{v}_j \rangle
= \langle \overline{\mathbf{w}}', M\overline{\mathbf{w}}'\rangle + 
\sum_{j=m_++1}^m |c_j| \langle \mathbf{v}_j', A_j \mathbf{v}_j' \rangle,
\end{equation}
where  $\overline{\mathbf{w}} := \sum_{j=1}^m c_j B_j^* \mathbf{v}_j$, $\overline{\mathbf{w}}' := x_* - M^{-1} \overline{\mathbf{w}}$, and $\mathbf{v}_j' := B_jx_* - A_j^{-1} \mathbf{v}_j$ for $j = m_++1,\ldots,m$. In particular, we have 
\begin{equation}\label{e:GeneKey}
\sum_{j=1}^m c_j \langle \mathbf{v}_j, A_j^{-1} \mathbf{v}_j \rangle \le \langle \overline{\mathbf{w}} , M^{-1} \overline{\mathbf{w}} \rangle 
\end{equation}
for any $\mathbf{v}_j \in \mathbb{R}^{n_j}$, $j=1,\ldots, m$.
\end{lemma}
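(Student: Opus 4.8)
The plan is to verify the identity \eqref{e:GeneId} by a direct expansion, organizing the computation around the splitting of the index set into $\{1,\ldots,m_+\}$ (positive exponents) and $\{m_++1,\ldots,m\}$ (negative exponents), and then to read off \eqref{e:GeneKey} as an immediate consequence since the right-hand side of \eqref{e:GeneId} is manifestly non-negative (recall $M>0$ and each $A_j>0$, and $|c_j| = -c_j$ for $j>m_+$). The only subtlety in deducing \eqref{e:GeneKey} is that the identity is stated for a \emph{particular} choice of $x_*$ in the affine fibre $\mathbf{B}_+^{-1}(A_1^{-1}\mathbf{v}_1,\ldots,A_{m_+}^{-1}\mathbf{v}_{m_+})$; this fibre is nonempty precisely because $\mathbf{B}_+$ is surjective, so such an $x_*$ always exists, and since \eqref{e:GeneKey} does not mention $x_*$, the identity for any single valid choice suffices.

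First I would set up notation: write $\mathbf{w} := \sum_{j=1}^{m_+} c_j B_j^* \mathbf{v}_j$ for the ``positive part'' and note $\overline{\mathbf{w}} = \mathbf{w} + \sum_{j=m_++1}^m c_j B_j^*\mathbf{v}_j$, while the defining property of $x_*$ is exactly $B_j x_* = A_j^{-1}\mathbf{v}_j$ for $j=1,\ldots,m_+$, equivalently $\mathbf{v}_j = A_j B_j x_*$ for those $j$. The natural way to exploit this is to substitute $\mathbf{v}_j = A_j B_j x_*$ into the terms with $j \le m_+$ wherever they appear. Then $\mathbf{w} = \sum_{j=1}^{m_+} c_j B_j^* A_j B_j x_*$, and for $j > m_+$ the definition $\mathbf{v}_j' = B_j x_* - A_j^{-1}\mathbf{v}_j$ gives $\mathbf{v}_j = A_j(B_j x_* - \mathbf{v}_j')$, i.e. $c_j B_j^* \mathbf{v}_j = c_j B_j^* A_j B_j x_* - c_j B_j^* A_j \mathbf{v}_j'$. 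Summing over all $j$ yields the clean relation
\[
\overline{\mathbf{w}} = M x_* - \sum_{j=m_++1}^m c_j B_j^* A_j \mathbf{v}_j',
\]
which is the algebraic heart of the matter: it expresses $\overline{\mathbf{w}}$ in terms of $x_*$ and the ``excess'' vectors $\mathbf{v}_j'$. From here, $\overline{\mathbf{w}}' = x_* - M^{-1}\overline{\mathbf{w}} = M^{-1}\sum_{j=m_++1}^m c_j B_j^* A_j \mathbf{v}_j'$.

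The remaining work is bookkeeping: expand $\langle \overline{\mathbf{w}}, M^{-1}\overline{\mathbf{w}}\rangle$ using the displayed relation, expand $\sum_j c_j \langle \mathbf{v}_j, A_j^{-1}\mathbf{v}_j\rangle$ by splitting at $m_+$ and substituting $\mathbf{v}_j = A_j B_j x_*$ in the first block and $\mathbf{v}_j = A_j(B_j x_* - \mathbf{v}_j')$ in the second, and check that the difference collapses to $\langle \overline{\mathbf{w}}', M \overline{\mathbf{w}}'\rangle + \sum_{j>m_+}|c_j|\langle \mathbf{v}_j', A_j \mathbf{v}_j'\rangle$. In this expansion many cross terms involving $\langle x_*, \cdot\rangle$ and $\langle x_*, M x_*\rangle$ should cancel against each other; the key cancellation is that $\sum_{j=1}^{m_+} c_j\langle B_j x_*, A_j B_j x_*\rangle = \langle x_*, (\sum_{j=1}^{m_+} c_j B_j^*A_jB_j) x_*\rangle$ while $\langle x_*, M x_*\rangle$ carries \emph{all} $j$, so the positive block precisely accounts for part of $\langle x_*, Mx_*\rangle$ and the negative block supplies the rest together with the quadratic-in-$\mathbf{v}_j'$ corrections. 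I do not expect any genuine obstacle here — the identity is an exact quadratic-form computation — but the main place to be careful is tracking signs (since $c_j<0$ for $j>m_+$, so $c_j$ and $|c_j| = -c_j$ must not be confused) and making sure the substitution $\mathbf{v}_j = A_j B_j x_*$ is only used for $j \le m_+$, where it is justified by the choice of $x_*$, and never for $j>m_+$, where $B_j x_*$ and $A_j^{-1}\mathbf{v}_j$ are genuinely different. Finally, \eqref{e:GeneKey} follows by dropping the two non-negative terms on the right of \eqref{e:GeneId}.
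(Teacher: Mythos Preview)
Your proposal is correct and follows essentially the same approach as the paper: a direct expansion of the quadratic forms using the defining relation $B_jx_*=A_j^{-1}\mathbf{v}_j$ for $j\le m_+$. The paper expands the right-hand side of \eqref{e:GeneId} and groups the result into three pieces $I_a+I_b+I_c$ which are then simplified to match the left-hand side, whereas you first isolate the clean intermediate relation $\overline{\mathbf{w}}=Mx_*-\sum_{j>m_+}c_jB_j^*A_j\mathbf{v}_j'$ (and hence $\overline{\mathbf{w}}'=M^{-1}\sum_{j>m_+}c_jB_j^*A_j\mathbf{v}_j'$) and expand from there; this is a minor organizational difference, not a different argument.
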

Inequality \eqref{e:GeneKey} can be obtained directly from \cite[Lemma 3.5]{BW}. Since the above result is pivotal to the proof of Theorem \ref{t:QMain}, we include our own brief justification. 
\begin{proof}[Proof of Lemma \ref{l:Key2}]
First note that 
\begin{equation}\label{e:GenChoice}
B_jx_* = A_j^{-1}\mathbf{v}_j \quad (1\le j \le m_+)
\end{equation}
and with this in mind we will expand the right-hand side of \eqref{e:GeneId}. 
For $j = m_++1,\ldots,m$,  we have
\begin{align*}
\langle \mathbf{v}_j', A_j \mathbf{v}_j' \rangle 
&= 
\langle B_j x_*, A_j B_jx_*\rangle 
-2 
\langle B_j x_*, \mathbf{v}_j\rangle  
+ 
\langle A_j^{-1}\mathbf{v}_j, \mathbf{v}_j\rangle
\end{align*}
and 
\begin{align*}
\langle \overline{\mathbf{w}}', M\overline{\mathbf{w}}' \rangle 
&= 
\langle x_* , Mx_*  \rangle 
-
2\langle x_* ,  \overline{\mathbf{w}}  \rangle 
+
\langle M^{-1}\overline{\mathbf{w}} ,  \overline{\mathbf{w}}  \rangle. 
\end{align*}
So, if we set $M_+:= \sum_{j=1}^{m_+} c_j B_j^* A_j B_j$, the right-hand side of \eqref{e:GeneId} can be written $I_a+I_b+I_c$ where 
\begin{align*}
I_a 
&= 
\langle x_* , M_+x_*  \rangle,\\
I_b
&=
- 2\langle x_* ,  \overline{\mathbf{w}}  \rangle -2 \sum_{j=m_++1}^m |c_j|\langle B_j x_*, \mathbf{v}_j\rangle   , \\
I_c
&= 
\langle M^{-1}\overline{\mathbf{w}},  \overline{\mathbf{w}}  \rangle + \sum_{j=m_++1}^m |c_j| \langle A_j^{-1}\mathbf{v}_j, \mathbf{v}_j\rangle.
\end{align*}
From \eqref{e:GenChoice} we have
\begin{align*}
I_a
&= 
\bigg\langle x_*, \sum_{j=1}^{m_+} c_j B_j^*  \mathbf{v}_j \bigg\rangle 
=
\sum_{j=1}^{m_+} c_j \langle B_j x_*, \mathbf{v}_j\rangle 
=
\sum_{j=1}^{m_+} c_j \langle A_j^{-1}\mathbf{v}_j, \mathbf{v}_j\rangle 
\end{align*}
and 
\begin{align*}
I_b =
-2 \bigg\langle x_*, \sum_{j=1}^{m_+} c_j B_j^*\mathbf{v}_j \bigg\rangle = 
-2\sum_{j=1}^{m_+} c_j \langle B_j x_*, \mathbf{v}_j\rangle 
=
-2 \sum_{j=1}^{m_+} c_j \langle A_j^{-1}\mathbf{v}_j, \mathbf{v}_j\rangle.
\end{align*}
So, putting things together, $I_a+I_b+I_c$ clearly coincides with the left-hand side of \eqref{e:GeneId}.
\end{proof}

In order to make rigorous the formal considerations in our above sketch proof of Theorem \ref{t:Geo}, it is important to identify an appropriately nice class of functions $f_j$ on which we may reduce matters. Similar considerations will also be necessary for our proof of Theorem \ref{t:QMain} in Section \ref{section:MainProof} and the content of Section \ref{subsection:nicef} below has been prepared primarily for this purpose.

\subsection{A log-convexity estimate}

Taking trace on both sides of the geometric condition $\sum_{j=1}^m c_j B_j^*B_j = \mathrm{id}$, we see that $\sum_{j=1}^m c_jn_j = n$ follows. In fact, in the case $\mathbf{G} = (\infty,\ldots,\infty)$ and $\mathcal{Q} = 0$, the condition $\sum_{j=1}^m c_jn_j = n$ is easily seen to be necessary for the strict positivity of the inverse Brascamp--Lieb constant by standard scaling considerations. For general $\mathbf{G}$, such a scaling condition is no longer a requirement and it is natural to modify the quantity $\mathfrak{Q}$ considered in \eqref{e:GeoQ} and mitigate for the loss of scaling by considering quantities of the form
\[
\mathfrak{Q}(t) = t^{-\alpha} \int_{\mathbb{R}^n} \prod_{j=1}^m u_j(t,B_jx)^{c_j} \, dx,
\]
where $\alpha := \frac{1}{2}(n - \sum_{j=1}^m c_jn_j)$. In such a case, the heat-flow monotonicity argument will make important use of the following log-convexity estimate.
\begin{lemma}[Log-convexity] \label{l:LiYau}
Let $G \in S_+(\mathbb{R}^n)$ and $u:\mathbb{R}^n \to (0,\infty)$ be of type $G$. Then
\begin{equation*}
D^2(\log u) \geq -2\pi G.
\end{equation*}
\end{lemma}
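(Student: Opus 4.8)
The plan is to reduce the statement to a pointwise computation with the heat kernel and then invoke convexity. First I would recall that a type $G$ function $u$ has the explicit form
\[
u(x) = (\det G)^{1/2} \int_{\mathbb{R}^n} e^{-\pi \langle x-y, G(x-y)\rangle}\, d\mu(y)
\]
for some positive finite Borel measure $\mu$ with non-zero mass. The claim $D^2(\log u) \geq -2\pi G$ is equivalent, after writing $v(x) := e^{\pi \langle x, Gx\rangle} u(x)$, to showing that $v$ is log-convex. Indeed $\log u = \log v - \pi\langle x, Gx\rangle$, so $D^2(\log u) = D^2(\log v) - 2\pi G$, and the inequality holds precisely when $D^2(\log v) \geq 0$.

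The key step is therefore to observe that
\[
v(x) = (\det G)^{1/2} \int_{\mathbb{R}^n} e^{2\pi \langle x, Gy\rangle} e^{-\pi \langle y, Gy\rangle}\, d\mu(y) = (\det G)^{1/2}\int_{\mathbb{R}^n} e^{2\pi \langle x, Gy\rangle}\, d\nu(y),
\]
where $d\nu(y) := e^{-\pi\langle y, Gy\rangle}\, d\mu(y)$ is again a positive finite measure. Thus $v$ is an integral superposition (a Laplace-transform-type average) of the exponentials $x \mapsto e^{2\pi\langle x, Gy\rangle}$, each of which is log-linear, hence log-convex. Log-convexity is preserved under such averaging: this is exactly Hölder's inequality. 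Concretely, for $x_0, x_1 \in \mathbb{R}^n$ and $\theta \in (0,1)$, writing $x_\theta = (1-\theta)x_0 + \theta x_1$,
\[
v(x_\theta) = (\det G)^{1/2}\int \left(e^{2\pi\langle x_0, Gy\rangle}\right)^{1-\theta}\left(e^{2\pi\langle x_1, Gy\rangle}\right)^{\theta}\, d\nu(y) \leq v(x_0)^{1-\theta} v(x_1)^{\theta},
\]
where the inequality is Hölder with exponents $\frac{1}{1-\theta}$ and $\frac{1}{\theta}$ (and the prefactor $(\det G)^{1/2}$ is split as $(\det G)^{(1-\theta)/2}(\det G)^{\theta/2}$). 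This shows $\log v$ is convex; since $v$ is smooth and strictly positive, this is equivalent to $D^2(\log v) \geq 0$, and the lemma follows.

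The only point requiring mild care — and the closest thing to an obstacle — is justifying that $v$ is smooth with the differentiations under the integral sign being legitimate, so that the pointwise convexity of $\log v$ genuinely translates to the Hessian inequality $D^2(\log v)\geq 0$; this is routine since $\nu$ is a finite measure and the Gaussian-type factor in $d\nu$ guarantees that all the relevant integrals $\int |y|^k e^{2\pi\langle x, Gy\rangle}\, d\nu(y)$ converge locally uniformly in $x$. Alternatively, one may avoid differentiation altogether and simply record that $D^2(\log u)\geq -2\pi G$ is, by definition, the statement that $\log u(x) + \pi\langle x, Gx\rangle = \log v(x)$ is convex, which is precisely what the Hölder computation establishes.
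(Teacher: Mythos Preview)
Your proof is correct. The paper does not give its own argument but simply refers to \cite[Lemma 8.6]{BCCT}, while noting (exactly as you do) that the lemma is equivalent to the log-convexity of $u/g_G$; your H\"older computation with the Laplace-type representation $v(x)=\int e^{2\pi\langle x,Gy\rangle}\,d\nu(y)$ is the standard way to complete this, and matches the approach in \cite{BCCT}.
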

Obviously $D^2(\log g_G) = -2\pi G$ and thus Lemma \ref{l:LiYau} is equivalent to the log-convexity of the ratio $u/g_G$.
For a proof of Lemma \ref{l:LiYau}, we refer the reader to \cite[Lemma 8.6]{BCCT}. We also remark that estimates of the type in Lemma \ref{l:LiYau} are also known as \emph{Li--Yau gradient estimates} and hold much more generally in the framework of Riemannian manifolds; see \cite{LiYau}.

\subsection{The key decomposition of $\mathcal{Q}$}
In order to handle the gaussian kernel, our proof of Theorem \ref{t:QMain} is underpinned by the following decomposition of the transformation $\mathcal{Q}$ into positive and negative parts. 
\begin{proposition}[\cite{BW}] \label{p:BWdecomp}
	The transformation $\mathcal{Q} \in S(\mathbb{R}^n)$ satisfies \eqref{e:NondegBW} if and only if there exist linear surjections $B_0:\mathbb{R}^n \to \mathbb{R}^{n_0}$ and $B_{m+1}:\mathbb{R}^n \to \mathbb{R}^{n_{m+1}}$, and $\mathcal{Q}_+\in S_+(\mathbb{R}^{n_0})$ and $\mathcal{Q}_-\in S_+(\mathbb{R}^{n_{m+1}})$ such that 
\begin{equation}\label{e:DecompQ}
\mathcal{Q} = B_0^*\mathcal{Q}_+ B_0 - B_{m+1}^*\mathcal{Q}_-  B_{m+1},
\end{equation} 
the transformation $(B_0,\mathbf{B}_+) : \mathbb{R}^n \to \oplus_{j=0}^{m_+} \mathbb{R}^{n_j}$ is bijective, and ${\rm ker}\, \mathbf{B}_+ \subseteq {\rm ker}\, B_{m+1}$.
\end{proposition}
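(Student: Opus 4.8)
The plan is to prove the two implications of the stated equivalence separately; the direction in which the decomposition is assumed is routine, and essentially all of the work lies in constructing the decomposition from \eqref{e:NondegBW}.

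For the routine direction, assume \eqref{e:DecompQ} holds together with the stated properties. Bijectivity of $(B_0,\mathbf{B}_+)$ gives $n = n_0 + \sum_{j=1}^{m_+} n_j$. If $0 \neq x \in {\rm ker}\,\mathbf{B}_+$, then $B_{m+1}x = 0$ (since ${\rm ker}\,\mathbf{B}_+ \subseteq {\rm ker}\, B_{m+1}$), so $\langle x, \mathcal{Q}x\rangle = \langle B_0 x, \mathcal{Q}_+ B_0 x\rangle > 0$ because $B_0 x \neq 0$ (else $(B_0,\mathbf{B}_+)x = 0$) and $\mathcal{Q}_+ > 0$; hence $\mathcal{Q}|_{{\rm ker}\,\mathbf{B}_+} > 0$. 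Finally, on any subspace $E$ on which $\mathcal{Q}$ is positive definite we have $\langle x, B_0^*\mathcal{Q}_+ B_0 x\rangle \geq \langle x, \mathcal{Q}x\rangle > 0$ for $x \neq 0$, so $E \cap {\rm ker}(B_0^*\mathcal{Q}_+ B_0) = \{0\}$ and thus $\dim E \leq {\rm rank}(B_0^*\mathcal{Q}_+ B_0) = {\rm rank}\, B_0 = n_0$; taking $\dim E = s^+(\mathcal{Q})$ yields $s^+(\mathcal{Q}) \leq n_0$, which combined with $n = n_0 + \sum_{j=1}^{m_+} n_j$ gives \eqref{e:NondegBW}.

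For the main direction, set $W := {\rm ker}\,\mathbf{B}_+$. First I would note that \eqref{e:NondegBW} forces the dimension inequality to be an equality: since $\mathcal{Q}|_W > 0$ and the maximal dimension of a subspace on which a self-adjoint form is positive definite equals its number of positive eigenvalues, $\dim W \leq s^+(\mathcal{Q})$, while $\dim W = n - {\rm rank}\,\mathbf{B}_+ \geq n - \sum_{j=1}^{m_+} n_j \geq s^+(\mathcal{Q})$ by \eqref{e:NondegBW}; hence $\dim W = s^+(\mathcal{Q})$, $\mathbf{B}_+$ is surjective, and $n = s^+(\mathcal{Q}) + \sum_{j=1}^{m_+} n_j$. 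Since $\mathcal{Q}|_W$ is nondegenerate, $W$ is a nondegenerate subspace for $\mathcal{Q}$, so $\mathbb{R}^n = W \oplus U$ with $U := \{ x \in \mathbb{R}^n : \langle \mathcal{Q}x, w\rangle = 0 \text{ for all } w \in W\}$, and by additivity of the signature under this $\mathcal{Q}$-orthogonal splitting together with $\dim W = s^+(\mathcal{Q})$ we get $\mathcal{Q}|_U \leq 0$. Let $\pi_W, \pi_U$ be the projections associated with $\mathbb{R}^n = W \oplus U$ and define the self-adjoint $N$ by $\langle x, Nx\rangle := -\langle \pi_U x, \mathcal{Q}\pi_U x\rangle$; then $N \geq 0$ and $W \subseteq {\rm ker}\, N$. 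Setting $P := \mathcal{Q} + N$ and using $\langle w, \mathcal{Q}u\rangle = 0$ for $w \in W$, $u \in U$, one computes $\langle x, Px\rangle = \langle \pi_W x, \mathcal{Q}\pi_W x\rangle$, so $P \geq 0$ and ${\rm ker}\, P = U$, whence ${\rm rank}\, P = \dim W = s^+(\mathcal{Q})$.

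It then remains to present $P$ and $N$ in factored form. Pick surjections $B_0 : \mathbb{R}^n \to \mathbb{R}^{n_0}$ with ${\rm ker}\, B_0 = U$ (so $n_0 = s^+(\mathcal{Q})$) and $B_{m+1} : \mathbb{R}^n \to \mathbb{R}^{n_{m+1}}$ with ${\rm ker}\, B_{m+1} = {\rm ker}\, N$; since $P$ and $N$ are positive semi-definite with these kernels, they descend to positive definite forms on $\mathbb{R}^n/U$ and $\mathbb{R}^n/{\rm ker}\, N$, which transport along the isomorphisms induced by $B_0$ and $B_{m+1}$ to $\mathcal{Q}_+ \in S_+(\mathbb{R}^{n_0})$ and $\mathcal{Q}_- \in S_+(\mathbb{R}^{n_{m+1}})$ satisfying $P = B_0^*\mathcal{Q}_+ B_0$ and $N = B_{m+1}^*\mathcal{Q}_- B_{m+1}$; this gives \eqref{e:DecompQ}. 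Moreover $(B_0,\mathbf{B}_+)$ is injective since ${\rm ker}\, B_0 \cap {\rm ker}\,\mathbf{B}_+ = U \cap W = \{0\}$, hence bijective because $n_0 + \sum_{j=1}^{m_+} n_j = n$, and ${\rm ker}\,\mathbf{B}_+ = W \subseteq {\rm ker}\, N = {\rm ker}\, B_{m+1}$. The step I expect to be the main obstacle is the linear algebra underpinning the converse: recognising that \eqref{e:NondegBW} is exactly the statement that $W = {\rm ker}\,\mathbf{B}_+$ is a maximal positive-definite subspace for $\mathcal{Q}$ (which fixes all relevant dimensions), and then exploiting the $\mathcal{Q}$-orthogonal complement of $W$ to peel off the negative part of $\mathcal{Q}$ in a manner compatible with the required kernel conditions; granted these, the remainder is bookkeeping.
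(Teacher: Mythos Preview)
Your argument is correct. The paper does not give its own proof of this proposition; it simply cites \cite[Lemma 3.1]{BW} and records the single remark that Barthe--Wolff take $B_0$ to be the projection onto $\ker\mathbf{B}_+$. So there is no detailed proof to compare against, but your write-up is a complete and self-contained justification.

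A couple of comments comparing your construction to the hint in the paper. First, your observation that \eqref{e:NondegBW} forces $\dim(\ker\mathbf{B}_+)=s^+(\mathcal{Q})$, that $\mathbf{B}_+$ is surjective, and that the inequality $n\ge s^+(\mathcal{Q})+\sum_{j=1}^{m_+}n_j$ is in fact an equality, is correct and is exactly the linear-algebraic core of the result. Second, your choice of $B_0$ is slightly different from the one indicated in the paper: you take $B_0$ to be any surjection with kernel $U$, the $\mathcal{Q}$-orthogonal complement of $W=\ker\mathbf{B}_+$, whereas the paper notes that Barthe--Wolff take $B_0$ to be the (Euclidean) projection onto $W$, whose kernel is $W^\perp$. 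Both choices yield $n_0=\dim W$ and make $(B_0,\mathbf{B}_+)$ bijective; they simply lead to different (but equally valid) factorizations $P=B_0^*\mathcal{Q}_+B_0$ of the positive part. Your route via the $\mathcal{Q}$-orthogonal splitting $\mathbb{R}^n=W\oplus U$ is arguably the cleanest way to see simultaneously that $\mathcal{Q}|_U\le 0$ and to peel off the positive and negative parts with the required kernel inclusions.
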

The above decomposition is due to Barthe and Wolff and we refer the reader to \cite[Lemma 3.1]{BW} for a proof.  We note here that, assuming the non-degeneracy condition \eqref{e:NondegBW}, Barthe and Wolff take $B_0$ to be the projection onto $\ker \mathbf{B}_+$ and we understand that in the case $\ker \mathbf{B}_+ = \{0\}$ we have $n_0 = 0$ and \eqref{e:DecompQ} becomes $\mathcal{Q} = - B_{m+1}^*\mathcal{Q}_-  B_{m+1}$ (similarly when $n_{m+1} = 0$ we understand that $\mathcal{Q} = B_0^*\mathcal{Q}_+B_0$). As we shall see in the forthcoming proof of Theorem \ref{t:QMain}, the decisive scenario is when $\mathbf{B}_+$ is bijective.

\subsection{Nice classes of input functions} \label{subsection:nicef}
The main purpose of this remaining part of Section \ref{section:prelims} is to identify appropriate classes of test functions which approximate general inputs and are sufficiently well behaved in order to facilitate, as far as possible, a proof of Theorem \ref{t:QMain} which is free from burdensome technicalities.

\begin{definition}
The class $\mathcal{N}$ is defined to be those inputs $\mathbf{f} = (f_j)_{j=1}^m$ satisfying the conditions
\begin{align}
f_j(x_j) \leq C_\mathbf{f} \1_{|x_j| \leq C_\mathbf{f}} \qquad &(1\le j \le m_+), \label{e:Linftyc}\\
f_j(x_j) \geq C_\mathbf{f}^{-1} (1+ |x_j|^2)^{-n_j} \qquad &(m_++1\le j \le m) \label{e:slowdecay},
\end{align}
for some constant $C_\mathbf{f} \in (0,\infty)$.
\end{definition}

\begin{lemma}\label{l:Approx1}
For any nondegenerate Brascamp--Lieb datum $(\mathbf{B},\mathbf{c},\mathcal{Q})$, we have
\[
\mathrm{I}(\mathbf{B},\mathbf{c},\mathcal{Q}) = \inf_{\mathbf{f} \in \mathcal{N}} \mathrm{BL}(\mathbf{B},\mathbf{c},\mathcal{Q};\mathbf{f}).
\]
\end{lemma}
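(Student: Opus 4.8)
The plan is to prove both inequalities in the identity $\mathrm{I}(\mathbf{B},\mathbf{c},\mathcal{Q}) = \inf_{\mathbf{f} \in \mathcal{N}} \mathrm{BL}(\mathbf{B},\mathbf{c},\mathcal{Q};\mathbf{f})$. One direction, $\mathrm{I}(\mathbf{B},\mathbf{c},\mathcal{Q}) \le \inf_{\mathbf{f} \in \mathcal{N}} \mathrm{BL}(\mathbf{B},\mathbf{c},\mathcal{Q};\mathbf{f})$, is immediate since $\mathcal{N}$ is a subclass of all admissible inputs and $\mathrm{I}$ is defined as an infimum over all of them. So the real content is the reverse inequality: given an arbitrary admissible $\mathbf{f} = (f_j)_{j=1}^m$ with $\int f_j \in (0,\infty)$, I must produce a sequence $\mathbf{f}^{(k)} \in \mathcal{N}$ with $\mathrm{BL}(\mathbf{B},\mathbf{c},\mathcal{Q};\mathbf{f}^{(k)}) \to \mathrm{BL}(\mathbf{B},\mathbf{c},\mathcal{Q};\mathbf{f})$, or at least with $\limsup_k \mathrm{BL}(\mathbf{B},\mathbf{c},\mathcal{Q};\mathbf{f}^{(k)}) \le \mathrm{BL}(\mathbf{B},\mathbf{c},\mathcal{Q};\mathbf{f})$.

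The construction splits along the sign of $c_j$, mirroring the two defining conditions of $\mathcal{N}$. For the positive indices $1 \le j \le m_+$, I would truncate: replace $f_j$ by $f_j^{(k)} := \min(f_j, k)\1_{|x_j|\le k}$. Since $c_j > 0$ and we are bounding the left-hand side of \eqref{e:IBL} from below, shrinking $f_j$ can only decrease the numerator of $\mathrm{BL}$; meanwhile $\int f_j^{(k)} \uparrow \int f_j$ by monotone convergence, so the denominator increases to its target. Thus $\mathrm{BL}(\mathbf{B},\mathbf{c},\mathcal{Q};\mathbf{f}^{(k)})$ stays below a quantity tending to $\mathrm{BL}(\mathbf{B},\mathbf{c},\mathcal{Q};\mathbf{f})$ — and the condition \eqref{e:Linftyc} now holds. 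For the negative indices $m_+ + 1 \le j \le m$, where $c_j < 0$, I instead need a lower bound of the form \eqref{e:slowdecay}, so I would enlarge: replace $f_j$ by $f_j^{(k)} := f_j + \tfrac1k (1+|x_j|^2)^{-n_j}$ (note $(1+|x_j|^2)^{-n_j}$ is integrable on $\mathbb{R}^{n_j}$ precisely because the exponent $2n_j > n_j$). Since $c_j < 0$, the factor $f_j(B_jx)^{c_j}$ is \emph{decreasing} in $f_j$, so enlarging $f_j$ decreases the integrand on the left of \eqref{e:IBL}, again pushing the numerator of $\mathrm{BL}$ down; and $\int f_j^{(k)} \downarrow \int f_j$ by dominated convergence (the added term has mass $\tfrac{c_n}{k}\to 0$), so the denominator converges to its target from the right. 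Combining all $m$ modifications and using $0\cdot\infty = 0$ to handle the degenerate arithmetic carefully, one gets $\mathbf{f}^{(k)} \in \mathcal{N}$ with $\limsup_k \mathrm{BL} \le \mathrm{BL}(\mathbf{B},\mathbf{c},\mathcal{Q};\mathbf{f})$, which is exactly what is needed.

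The step I expect to be the main obstacle is controlling the numerator $\int_{\mathbb{R}^n} e^{-\pi\langle x,\mathcal{Q}x\rangle}\prod_j f_j(B_jx)^{c_j}\,dx$ when modifying the negative-exponent factors: one must verify that adding the slowly-decaying bump does not cause the numerator to jump \emph{down} to $0$ in a way that isn't compensated, i.e. one needs the convergence $\prod_j f_j^{(k)}(B_jx)^{c_j} \to \prod_j f_j(B_jx)^{c_j}$ to be usable inside the integral (via monotone convergence, since each negative-index factor decreases monotonically in $k$ while the positive-index factors, once truncated at a fixed level, increase monotonically — so after first fixing the positive truncations and then letting the negative bumps shrink, the integrand is monotone). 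A secondary technical point is the bookkeeping when $\mathrm{BL}(\mathbf{B},\mathbf{c},\mathcal{Q};\mathbf{f}) = 0$ or $= \infty$: in the former case there is nothing to prove beyond exhibiting \emph{some} element of $\mathcal{N}$, and in the latter the inequality is trivial; the interesting regime is $\mathrm{BL} \in (0,\infty)$, where the monotone/dominated convergence arguments above apply cleanly and the non-degeneracy of $(\mathbf{B},\mathbf{c},\mathcal{Q})$ guarantees the relevant integrals are finite. I would present the positive-index truncation and negative-index regularization as two separate lemmas or as two paragraphs, then assemble them, taking the modifications one index at a time so that each limit is a genuine monotone limit.
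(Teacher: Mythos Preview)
Your proposal is correct and follows essentially the same approach as the paper: truncate $f_j$ to $\min(f_j,k)\1_{|x_j|\le k}$ for $1\le j\le m_+$ and add $k^{-1}(1+|x_j|^2)^{-n_j}$ for $m_++1\le j\le m$, then observe that $(f_j^{(k)})^{c_j}\uparrow f_j^{c_j}$ for \emph{every} $j$ so that a single application of monotone convergence handles the numerator, while the denominators converge trivially. The paper carries out all $m$ modifications simultaneously rather than one index at a time, which is slightly cleaner than the two-step scheme you sketch, but the substance is identical.
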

\begin{proof}
For $N \geq 1$ and an arbitrary input $\mathbf{f}=(f_j)_{j=1}^m$, define the input $\mathbf{f}^{(N)}$ by
\begin{equation*}
f_j^{(N)}(x_j) :=
\left\{ 
\begin{array}{ll}
\1_{|x_j| \leq N}  \1_{f_j\le N}(x_j)  f_j(x_j) \quad & (1\le j \le m_+), \vspace{2mm}  \\ 
N^{-1} (1+|x_j|^2)^{-n_j} +  f_j(x_j)  \quad & (m_++1\le j \le m).
  \end{array} 
\right.
\end{equation*}
Clearly $\mathbf{f}^{(N)} \in \mathcal{N}$. Also, it is obvious that $f_j^{(N)} \uparrow f_j$ for $1\le j \le m_+$ and $f_j^{(N)} \downarrow f_j$ for $m_++1\le j \le m$ as $N\to \infty$. 
For $1\le j \le m_+$, an application of the monotone convergence theorem shows 
$
\int f_j^{(N)} \to \int f_j. 
$
For $m_++1\le j \le m$, since $(1+|x_j|^2)^{-n_j} \in L^1(\mathbb{R}^{n_j})$, we have
\begin{align*}
\lim_{N\to\infty} \int_{\mathbb{R}^{n_j}} f_j^{(N)}
&= 
\lim_{N\to\infty} \frac1N \int_{\mathbb{R}^{n_j}} (1+|x_j|^2)^{-n_j}\, dx_j + \int_{\mathbb{R}^{n_j}}  f_j 
=
\int_{\mathbb{R}^{n_j}} f_j. 
\end{align*}
Also, since 
$
(f_j^{(N)})^{c_j} \uparrow f_j^{c_j} 
$
for each $j=1,\ldots, m$, another application of the monotone convergence theorem gives
\begin{align*}
\lim_{N\to\infty} 
\int_{\mathbb{R}^n} e^{-\pi \langle x, \mathcal{Q} x\rangle} \prod_{j=1}^m f_j^{(N)}(B_jx)^{c_j}  \, dx
=
\int_{\mathbb{R}^n} e^{-\pi \langle x, \mathcal{Q} x\rangle} \prod_{j=1}^m f_j(B_jx)^{c_j} \, dx.
\end{align*}
The above shows
\[
\mathrm{BL}(\mathbf{B},\mathbf{c},\mathcal{Q};\mathbf{f}) = \lim_{N \to \infty} \mathrm{BL}(\mathbf{B},\mathbf{c},\mathcal{Q};\mathbf{f}^{(N)}) \geq  \inf_{\mathbf{\widetilde{f}} \in \mathcal{N}} \mathrm{BL}(\mathbf{B},\mathbf{c},\mathcal{Q};\mathbf{\widetilde{f}})
\]
and thus 
$
\mathrm{I}(\mathbf{B},\mathbf{c},\mathcal{Q}) \geq \inf_{\mathbf{f} \in \mathcal{N}} \mathrm{BL}(\mathbf{B},\mathbf{c},\mathcal{Q};\mathbf{f}).
$
The reverse inequality is trivial and thus we conclude the proof of the lemma.
\end{proof}

Thanks to the previous approximation lemma, we can deduce the following result. 
\begin{lemma}\label{l:limit}
If $(\mathbf{B}, \mathbf{c},\mathcal{Q},\mathbf{G})$ is a non-degenerate generalized Brascamp--Lieb datum, then
$$
\lim_{\lambda\to\infty}  {\rm I}(\mathbf{B},\mathbf{c},\mathcal{Q},\lambda \mathbf{G}) = {\rm I}(\mathbf{B},\mathbf{c},\mathcal{Q}),\;\;\; \lim_{\lambda\to\infty}  {\rm I}_{\mathrm{g}}(\mathbf{B},\mathbf{c},\mathcal{Q},\lambda \mathbf{G}) = {\rm I}_{\mathrm{g}}(\mathbf{B},\mathbf{c},\mathcal{Q}).
$$
\end{lemma}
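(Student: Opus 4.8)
The plan is to handle the two limits separately. For ${\rm I}_{\rm g}$ the statement is essentially formal: the family of sets $\{\mathbf{A}:\mathbf{A}\le\lambda\mathbf{G}\}$ increases with $\lambda$ and its union is all of $\prod_{j=1}^m S_+(\mathbb{R}^{n_j})$, since any $A_j>0$ satisfies $A_j\le\lambda G_j$ once $\lambda$ exceeds $\|G_j^{-1/2}A_jG_j^{-1/2}\|$. Hence $\lambda\mapsto{\rm I}_{\rm g}(\mathbf{B},\mathbf{c},\mathcal{Q},\lambda\mathbf{G})$ is non-increasing and $\lim_{\lambda\to\infty}{\rm I}_{\rm g}(\mathbf{B},\mathbf{c},\mathcal{Q},\lambda\mathbf{G})=\inf_{\lambda}\inf_{\mathbf{A}\le\lambda\mathbf{G}}{\rm BL}(\mathbf{B},\mathbf{c},\mathcal{Q};\mathbf{A})=\inf_{\mathbf{A}}{\rm BL}(\mathbf{B},\mathbf{c},\mathcal{Q};\mathbf{A})={\rm I}_{\rm g}(\mathbf{B},\mathbf{c},\mathcal{Q})$. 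For ${\rm I}$, the nesting property \eqref{e:nesting} shows $\lambda\mapsto{\rm I}(\mathbf{B},\mathbf{c},\mathcal{Q},\lambda\mathbf{G})$ is non-increasing and, since $T(\lambda\mathbf{G})\subseteq T(\infty)$, bounded below by ${\rm I}(\mathbf{B},\mathbf{c},\mathcal{Q})$; so the limit exists and is $\ge{\rm I}(\mathbf{B},\mathbf{c},\mathcal{Q})$, and only the reverse inequality requires work. By Lemma \ref{l:Approx1} it is enough to show $\limsup_{\lambda\to\infty}{\rm I}(\mathbf{B},\mathbf{c},\mathcal{Q},\lambda\mathbf{G})\le{\rm BL}(\mathbf{B},\mathbf{c},\mathcal{Q};\mathbf{f})$ for every $\mathbf{f}\in\mathcal{N}$. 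Given such an $\mathbf{f}$, I would take $f_j^{(\lambda)}:=g_{\lambda G_j}\ast(f_j\,dx)\in T(\lambda G_j)$, note that the denominators are unaffected because $\int f_j^{(\lambda)}=\int f_j$, and reduce to showing that $\int_{\mathbb{R}^n}e^{-\pi\langle x,\mathcal{Q}x\rangle}\prod_{j=1}^m f_j^{(\lambda)}(B_jx)^{c_j}\,dx\to\int_{\mathbb{R}^n}e^{-\pi\langle x,\mathcal{Q}x\rangle}\prod_{j=1}^m f_j(B_jx)^{c_j}\,dx$. Pointwise a.e.\ convergence of the integrands is standard (each $g_{\lambda G_j}$ is an approximate identity, so $f_j^{(\lambda)}\to f_j$ at every Lebesgue point, and surjectivity of $B_j$ makes $B_j$-preimages of null sets null), so everything comes down to producing a $\lambda$-independent integrable majorant and invoking dominated convergence.

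Constructing this majorant is where the conditions built into $\mathcal{N}$ and the non-degeneracy hypothesis are used. For $m_+<j\le m$, the lower bound \eqref{e:slowdecay} together with a uniform-in-$\lambda$ lower bound on the mass that $g_{\lambda G_j}$ ($\lambda\ge1$) places on a fixed small ball gives $f_j^{(\lambda)}(y)\gtrsim(1+|y|^2)^{-n_j}$ uniformly in $\lambda\ge1$, so that $f_j^{(\lambda)}(B_jx)^{c_j}$ is dominated by a fixed polynomial in $x$. For $1\le j\le m_+$, the bound \eqref{e:Linftyc} gives both $f_j^{(\lambda)}\le C_{\mathbf f}$ and, since $f_j$ is supported in a fixed ball, $f_j^{(\lambda)}(y)\lesssim\lambda^{n_j/2}e^{-\pi\lambda\gamma_j(|y|-C_{\mathbf f})^2}$ for $|y|\ge C_{\mathbf f}$, where $\gamma_j>0$ is the least eigenvalue of $G_j$; moreover, from $\partial_\lambda\log g_{\lambda G_j}(w)=\tfrac{n_j}{2\lambda}-\pi\langle w,G_jw\rangle\le0$ for $|w|\ge1$ and $\lambda$ large, the map $\lambda\mapsto f_j^{(\lambda)}(y)$ is non-increasing on $\{|y|\ge C_{\mathbf f}+1\}$ once $\lambda\ge\lambda_1$, where $\lambda_1$ depends only on $\mathbf{G}$ and the $n_j$. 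Fixing $\lambda_2\ge\lambda_1$, one then obtains for every $\lambda\ge\lambda_2$ a bound on the integrand of the form $C\,e^{-\pi\langle x,(\mathcal{Q}+t\,\mathbf{B}_+^*\mathbf{B}_+)x\rangle}(1+|x|^2)^N$ with $N$ fixed and $t=t(\lambda_2)\to\infty$ as $\lambda_2\to\infty$. Since $\mathbf{B}_+^*\mathbf{B}_+\ge0$ vanishes exactly on $\ker\mathbf{B}_+$ and $\mathcal{Q}|_{\ker\mathbf{B}_+}>0$ by \eqref{e:NondegBW}, a routine perturbation argument gives $\mathcal{Q}+t\,\mathbf{B}_+^*\mathbf{B}_+>0$ for $t$ large; choosing $\lambda_2$ accordingly makes the majorant integrable over $\mathbb{R}^n$, dominated convergence on $[\lambda_2,\infty)$ gives the numerator convergence, hence ${\rm BL}(\mathbf{B},\mathbf{c},\mathcal{Q};\mathbf{f}^{(\lambda)})\to{\rm BL}(\mathbf{B},\mathbf{c},\mathcal{Q};\mathbf{f})$, and taking the infimum over $\mathbf{f}\in\mathcal{N}$ closes the argument.

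The delicate point — and the step I expect to be the main obstacle — is precisely the integrability of the majorant. Because $\mathcal{Q}$ is merely self-adjoint, $e^{-\pi\langle x,\mathcal{Q}x\rangle}$ grows along the negative directions of $\mathcal{Q}$, while a crude $\lambda$-uniform bound on $f_j^{(\lambda)}$ ($j\le m_+$) supplies only Gaussian decay at the fixed rate $\sim\gamma_j$, which in general is too weak to compensate (it would suffice only when $\mathcal{Q}+\tfrac{\pi}{4}\sum_{j\le m_+}\gamma_j c_j B_j^*B_j>0$, which can fail). The remedy is to exploit that, for $\lambda$ large, the heat evolution $f_j^{(\lambda)}$ is pointwise \emph{decreasing} in $\lambda$ outside the support of $f_j$, so one may dominate by $f_j^{(\lambda_2)}$ with $\lambda_2$ as large as needed, boosting the effective decay rate to $\sim\lambda_2\gamma_j$; the non-degeneracy condition $\mathcal{Q}|_{\ker\mathbf{B}_+}>0$ is then exactly what guarantees that adding a sufficiently large multiple of $\mathbf{B}_+^*\mathbf{B}_+$ to $\mathcal{Q}$ restores positive definiteness. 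Once this is arranged, the rest is routine bookkeeping.
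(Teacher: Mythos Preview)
Your proof is correct and follows the same overall strategy as the paper: for ${\rm I}_{\rm g}$ argue via monotonicity of the constraint sets $\{\mathbf{A}\le\lambda\mathbf{G}\}$, and for ${\rm I}$ reduce to inputs in $\mathcal{N}$ via Lemma~\ref{l:Approx1}, convolve with $g_{\lambda\mathbf{G}}$, and pass to the limit in the numerator by dominated convergence. The only substantive difference is in how the integrable majorant is built. The paper bounds $g_{\lambda G_j}\ast f_j(x_j)$ for $j\le m_+$ by a Gaussian with the full matrix $G_j$ in the exponent (not just its smallest eigenvalue), so that after taking products one gets a majorant $C\,e^{-\pi\langle x,(\mathcal{Q}+M_+)x\rangle}(1+|x|^2)^N$ with $M_+=\sum_{j\le m_+}c_jB_j^*G_jB_j$, and then invokes the second non-degeneracy condition \eqref{e:NondegG}, which says exactly $\mathcal{Q}+M_+>0$; no monotonicity-in-$\lambda$ device is needed. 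Your route---boosting the decay rate by exploiting that $\lambda\mapsto f_j^{(\lambda)}$ is eventually decreasing outside the support of $f_j$, and then appealing only to \eqref{e:NondegBW} to force $\mathcal{Q}+t\,\mathbf{B}_+^*\mathbf{B}_+>0$ for $t$ large---is also correct and in fact slightly more robust (it would go through even if \eqref{e:NondegG} failed), but under the stated hypotheses the paper's argument is shorter and avoids the extra threshold bookkeeping.
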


\begin{remark}
Lemma \ref{l:limit} allows us to show that Theorem \ref{t:QMain} recovers Theorem \ref{t:BWQ} (and justifies the notation \eqref{Def:Typeinfty}). To deduce Theorem \ref{t:BWQ}, we take $(\mathbf{B},\mathbf{c},\mathcal{Q})$ such that \eqref{e:NondegBW} holds and use \cite[Proposition 2.2]{BW} to obtain the existence of $\lambda_0 > 0$ such that
\[
\mathcal{Q} + \lambda_0 \sum_{j=1}^{m_+} c_j B_j^*B_j > 0.
\] 
Thus, setting $G_j := \lambda_0\mathrm{id}$ for each $j=1,\ldots,m$, it follows that $(\mathbf{B},\mathbf{c},\mathcal{Q},\lambda \mathbf{G})$ is non-degenerate for all $\lambda \geq 1$. By using Lemma \ref{l:limit} and Theorem \ref{t:QMain} we obtain
\begin{align*}
\mathrm{I}(\mathbf{B},\mathbf{c},\mathcal{Q}) & = \lim_{\lambda \to \infty} {\rm I}(\mathbf{B},\mathbf{c},\mathcal{Q},\lambda \mathbf{G}) \\
& = \lim_{\lambda \to \infty} {\rm I}_{\mathrm{g}}(\mathbf{B},\mathbf{c},\mathcal{Q},\lambda \mathbf{G}) = \mathrm{I}_{\mathrm{g}}(\mathbf{B},\mathbf{c},\mathcal{Q})
\end{align*}
and we recover Theorem \ref{t:BWQ}.
\end{remark}

\begin{proof}[Proof of Lemma \ref{l:limit}]
To see $\lim_{\lambda\to\infty}  {\rm I}(\mathbf{B},\mathbf{c},\mathcal{Q},\lambda \mathbf{G}) = {\rm I}(\mathbf{B},\mathbf{c},\mathcal{Q})$, it clearly suffices to show
\begin{equation}\label{e:Goal23Mar}
\lim_{\lambda \to \infty}{\rm I}(\mathbf{B},\mathbf{c},\mathcal{Q},\lambda \mathbf{G}) \le {\rm I}(\mathbf{B},\mathbf{c},\mathcal{Q}).
\end{equation}
To see this, we  argue that
\[
\lim_{\lambda \to \infty}{\rm I}(\mathbf{B},\mathbf{c},\mathcal{Q},\lambda \mathbf{G}) \leq \lim_{\lambda \to \infty}{\rm BL}(\mathbf{B},\mathbf{c},\mathcal{Q};g_{\lambda\mathbf{G}} \ast \mathbf{f}) = {\rm BL}(\mathbf{B},\mathbf{c},\mathcal{Q};\mathbf{f})
\]
for arbitrary $\mathbf{f} \in \mathcal{N}$ and use Lemma \ref{l:Approx1}; thus it suffices to show
\begin{align}\label{e:limitchange}
\lim_{\lambda\to\infty} \int_{\mathbb{R}^n} e^{-\pi \langle x, \mathcal{Q} x\rangle} \prod_{j=1}^m g_{\lambda G_j}\ast f_j(B_jx)^{c_j}   \, dx 
= 
\int_{\mathbb{R}^n}  e^{-\pi \langle  x, \mathcal{Q} x\rangle} \prod_{j=1}^m  f_j(B_jx)^{c_j}   \, dx.
\end{align}
To see this, we use the fact that $f \in \mathcal{N}$ to get the bounds
\begin{align*}
g_{\lambda G_j } \ast f_j(x_j) & \leq C e^{-\pi \langle x_j, \frac{\lambda}{4} G_j x_j\rangle} \qquad (1\le j \le m_+) \\
g_{\lambda G_j } \ast f_j(x_j) &\ge C^{-1} (1+|x_j|^2)^{-n_j} \qquad  (m_++1\le j \le m)  
\end{align*}
for all $x_j \in \mathbb{R}^{n_j}$, and where $C \in (0,\infty)$ is a constant depending on $G_j$ and $C_\mathbf{f}$. Then
\begin{align*}
e^{-\pi \langle x, \mathcal{Q} x\rangle} \prod_{j=1}^m g_{\lambda G_j}\ast f_j(B_jx)^{c_j}  \label{e:6May3} \leq C e^{-\pi \langle x, (\mathcal{Q} + \frac{\lambda}{4}M_+) x\rangle}
 (1+|x|^2)^{N}
\end{align*}
for all $x \in \mathbb{R}^n$, where $M_+:= \sum_{j=1}^{m_+} c_jB_j^*G_jB_j$, $N := \sum_{j=m_++1}^m |c_j|n_j$, and $C$ is a finite constant depending on $\mathbf{B}, \mathbf{G}$ and $\mathbf{f}$, but independent of $\lambda$. Thanks to the non-degeneracy condition \eqref{e:NondegG}, it follows that $\mathcal{Q} + \frac{\lambda}{4}M_+$ is positive definite for $\lambda \geq 4$. Hence we may apply the dominated convergence theorem to deduce  \eqref{e:limitchange}.

Similarly, we can prove $\lim_{\lambda\to\infty}  {\rm I}_{\mathrm{g}}(\mathbf{B},\mathbf{c},\mathcal{Q},\lambda \mathbf{G}) = {\rm I}_{\mathrm{g}}(\mathbf{B},\mathbf{c},\mathcal{Q})$.
\end{proof}

Now we introduce $\mathbf{G} = (G_j)_{j=1}^m$, where $G_j \in S_+(\mathbb{R}^{n_j})$, and assume the generalized Brascamp--Lieb datum $(\mathbf{B},\mathbf{c},\mathbf{G},\mathcal{Q})$ is non-degenerate. The following class of functions will play a prominent role in the heat flow proof of Theorem \ref{t:QMain}. 
\begin{definition}
The class $\mathcal{N}(\mathbf{G})$ is defined to be those inputs $\mathbf{f} = (f_j)_{j=1}^m \in \mathcal{T}(\mathbf{G})$ of the form
$
f_j = g_{G_j} * h_j
$
with $h_j \in \mathcal{N}$, $j=1,\ldots,m$.
\end{definition}
Regarding this function class, we first note that the following analogue of Lemma \ref{l:Approx1} holds.
\begin{lemma} \label{l:ApproxG}
We have
\[
\mathrm{I}(\mathbf{B},\mathbf{c},\mathcal{Q},\mathbf{G}) = \inf_{\mathbf{f} \in \mathcal{N(\mathbf{G})}} \mathrm{BL}(\mathbf{B},\mathbf{c},\mathcal{Q};\mathbf{f}).
\]
\end{lemma}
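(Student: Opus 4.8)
The plan is to reduce the general inverse Brascamp--Lieb constant over the class $T(\mathbf{G})$ to the smaller class $\mathcal{N}(\mathbf{G})$ by an approximation argument that mirrors the proof of Lemma~\ref{l:Approx1}, but carried out one layer below the heat regularization. Since every $\mathbf{f} \in T(\mathbf{G})$ has the form $f_j = g_{G_j} * \mu_j$ for a positive finite Borel measure $\mu_j$, and since members of $\mathcal{N}(\mathbf{G})$ have the form $f_j = g_{G_j} * h_j$ with $h_j \in \mathcal{N}$, the natural strategy is: given $\mathbf{f} \in T(\mathbf{G})$, first approximate each $\mu_j$ by a nice absolutely continuous measure $h_j^{(N)}\,dx_j$ lying in $\mathcal{N}$, and then transfer the resulting convergence through the (linear, positivity-preserving) convolution with $g_{G_j}$ into convergence of $\mathrm{BL}(\mathbf{B},\mathbf{c},\mathcal{Q};\mathbf{f})$.

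First I would set up the approximation at the level of measures. Fix $\mathbf{f} = (g_{G_j} * \mu_j)_j \in T(\mathbf{G})$. For $1 \le j \le m_+$, the obstruction to membership in $\mathcal{N}$ is the $L^\infty$ and compact-support bound \eqref{e:Linftyc}; here I would let $\mu_j^{(N)}$ be the measure $\mathbf{1}_{|y|\le N}\, d\mu_j$ smoothed slightly (e.g. convolved with a tiny fixed bump, or simply replaced by $N \cdot (\text{a truncation of its density after one more mollification})$) so that the resulting $h_j^{(N)}$ is bounded and compactly supported; by monotone convergence $\int h_j^{(N)} \uparrow \int \mu_j$ and $h_j^{(N)} * g_{G_j} \uparrow g_{G_j}*\mu_j$ pointwise. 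For $m_+ < j \le m$, the obstruction is the slow-decay lower bound \eqref{e:slowdecay}; here I would add $N^{-1}(1+|y|^2)^{-n_j}$ to the density of $\mu_j$ (after ensuring $\mu_j$ has been given a density by one harmless extra smoothing), giving $h_j^{(N)} \ge N^{-1}(1+|y|^2)^{-n_j}$ and hence, after convolution, $g_{G_j}*h_j^{(N)} \ge c_N (1+|x|^2)^{-n_j}$ for a suitable constant; and $\int h_j^{(N)} \downarrow \int \mu_j$ using $(1+|y|^2)^{-n_j}\in L^1$, with $(g_{G_j}*h_j^{(N)})^{c_j}\uparrow (g_{G_j}*\mu_j)^{c_j}$ since $c_j<0$.

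Next I would feed these monotone limits into the Brascamp--Lieb functional. On the product side, $\prod_j (\int h_j^{(N)})^{c_j} \to \prod_j (\int \mu_j)^{c_j} = \prod_j(\int f_j)^{c_j}$ by the one-sided convergences above. On the integral side, $\prod_{j=1}^m (g_{G_j}*h_j^{(N)})(B_jx)^{c_j} \uparrow \prod_{j=1}^m f_j(B_jx)^{c_j}$ monotonically (increasing factors for $j\le m_+$ with $c_j>0$, and increasing factors for $j>m_+$ with $c_j<0$ acting on decreasing bases), so the monotone convergence theorem gives convergence of $\int_{\mathbb{R}^n} e^{-\pi\langle x,\mathcal{Q}x\rangle}\prod_j(\cdot)^{c_j}\,dx$. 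Combining, $\mathrm{BL}(\mathbf{B},\mathbf{c},\mathcal{Q};\mathbf{f}^{(N)}) \to \mathrm{BL}(\mathbf{B},\mathbf{c},\mathcal{Q};\mathbf{f})$ with $\mathbf{f}^{(N)} = (g_{G_j}*h_j^{(N)})_j \in \mathcal{N}(\mathbf{G})$, which yields $\mathrm{I}(\mathbf{B},\mathbf{c},\mathcal{Q},\mathbf{G}) \ge \inf_{\mathbf{f}\in\mathcal{N}(\mathbf{G})}\mathrm{BL}(\mathbf{B},\mathbf{c},\mathcal{Q};\mathbf{f})$; the reverse inequality is immediate since $\mathcal{N}(\mathbf{G}) \subseteq T(\mathbf{G})$.

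The main obstacle I anticipate is a bookkeeping one rather than a conceptual one: an arbitrary $\mu_j \in T(\mathbf{G})$ need not be absolutely continuous, so the ``extra smoothing'' step must be handled cleanly — one wants $h_j^{(N)}$ genuinely in $\mathcal{N}$ (a function, with the stated pointwise bounds) while still being comparable to $\mu_j$ so that the convolved quantities converge monotonically, and one must check that convolving a positive measure with a fixed small gaussian and then truncating/mollifying does not destroy the one-sided monotonicity needed for the monotone convergence theorem. A safe route is to note that $g_{G_j}*\mu_j = g_{G_j'}*(g_{G_j''}*\mu_j)$ for any splitting $G_j^{-1} = (G_j')^{-1} + (G_j'')^{-1}$ with $G_j', G_j'' > G_j$ small perturbations — but since $T(G_j'')$ need not sit inside $\mathcal{N}$ directly, it is cleaner to absorb the mollification into the definition of $h_j^{(N)}$ and simply verify the bounds \eqref{e:Linftyc}--\eqref{e:slowdecay} by hand, exactly as in the proof of Lemma~\ref{l:Approx1}. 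I expect the write-up to be a close paraphrase of that proof with convolution by $g_{G_j}$ inserted throughout.
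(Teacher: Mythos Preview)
Your proposal is correct and aligns with the paper's own treatment: the paper does not give a separate proof of this lemma but simply states that it can be proved ``in a similar manner to the proof of Lemma~\ref{l:Approx1}'' and omits the details, which is precisely the strategy you outline. Your identification of the one genuine bookkeeping issue---that the underlying $\mu_j$ are measures rather than functions, so a preliminary mollification is needed before the truncation/padding from Lemma~\ref{l:Approx1} can be applied---is apt, and your suggested resolutions (splitting $g_{G_j}$ or absorbing an extra mollifier into $h_j^{(N)}$) are adequate for this purpose.
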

One can prove Lemma \ref{l:ApproxG} in a similar manner to the proof of Lemma \ref{l:Approx1} and so we omit the details.

Although we considered isotropic heat flow \eqref{e:isoheat} for geometric data, for more general data it will be appropriate to consider certain anisotropic heat flows. Associated with such flows, we have the following pointwise estimates whose role will be to make rigorous the forthcoming heat-flow proof of Theorem \ref{t:QMain} for data such that gaussian maximizers exist.
\begin{lemma} \label{l:heatdecay}
Let $(\mathbf{B},\mathbf{c})$ be a Brascamp--Lieb datum. Suppose $\mathbf{f}\in \mathcal{N}(\mathbf{G})$ and $\mathbf{A} \leq \mathbf{G}$. For each $j=1,\ldots,m$, suppose further that $u_j : (1,\infty) \times \mathbb{R}^{n_j} \to (0,\infty)$ is a solution to the heat equation
\begin{equation} \label{e:heateqn}
\partial_t {u}_j = \frac1{4\pi} {\rm div}\, (A_j^{-1}\nabla {u}_j ), \qquad {u}_j(1,x_j) = f_j(x_j).
\end{equation}
Fix $t > 1$ and $\varepsilon > 0$. Then 
\begin{equation} \label{e:vbound}
|\nabla \log u_j (t,B_jx)| \leq C_*(t)(1+|x|^2)^{n_j} \qquad (j = 1,\ldots,m),
\end{equation}
\begin{equation} \label{e:timebound}
|\partial_t \log u_j (t,B_jx)| \leq C_*(t)(1+|x|^2)^{\max\{2,n_j\}} \qquad (j = 1,\ldots,m),
\end{equation}
and
\begin{equation} \label{e:Ubound}
\prod_{j=1}^m u_j(t,B_jx)^{c_j}  \leq C_*(t,\varepsilon) (1 + |x|^2)^N e^{-(1-\varepsilon) \pi \langle  x,t^{-1}M_+x  \rangle}
\end{equation}
for all $x \in \mathbb{R}^n$. Here, $M_+ := \sum_{j=1}^{m_+} c_j B_j^*A_jB_j$ and $N := \sum_{j=m_++1}^m |c_j|n_j$. Also, $C_*(t)$ denotes a strictly positive and finite constant which depends on $t$, $\mathbf{B}$, $\mathbf{c}$, $\mathbf{A}$, $\mathbf{G}$, $\mathbf{f}$ and is locally uniformly bounded in $t$, and $C_*(t,\varepsilon)$ denotes such a constant which also depends on $\varepsilon$.
\end{lemma}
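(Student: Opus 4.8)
The plan is to exploit the explicit convolution form of each $u_j(t,\cdot)$, which reduces all three estimates to elementary Gaussian manipulations played off against the two defining features of the class $\mathcal{N}$. First I would record the representation. Since $\mathbf{f}\in\mathcal{N}(\mathbf{G})$ we may write $f_j=g_{G_j}*h_j$ with $h_j\in\mathcal{N}$, and by parabolic scaling from the convolution identity recorded just after \eqref{e:HeatG} the fundamental solution at time $s$ of $\partial_t u_j=\frac1{4\pi}\mathrm{div}(A_j^{-1}\nabla u_j)$ is $g_{s^{-1}A_j}$; hence for $t>1$
\[
u_j(t,\cdot)=g_{(t-1)^{-1}A_j}*f_j=g_{\Gamma_j(t)}*h_j,\qquad \Gamma_j(t)^{-1}:=(t-1)A_j^{-1}+G_j^{-1}.
\]
From $A_j\le G_j$ one reads off $t^{-1}A_j\le\Gamma_j(t)\le G_j$, so $u_j(t,\cdot)$ stays of type $\Gamma_j(t)\le G_j$; moreover $\Gamma_j(t)$, $\|\Gamma_j(t)\|$, $\det\Gamma_j(t)$ depend continuously on $t$ and stay bounded, and bounded away from degeneracy, on compact subsets of $(1,\infty)$. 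This is what makes every constant below locally uniformly bounded in $t$.

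Next I would establish pointwise bounds on $u_j$ and its first two derivatives. Differentiating under the convolution, $\partial^\alpha u_j(t,\cdot)=(\partial^\alpha g_{\Gamma_j(t)})*h_j$, and $|\partial^\alpha g_\Gamma(w)|\le Q_{|\alpha|}(|w|)\,g_\Gamma(w)$ for a polynomial $Q_{|\alpha|}$ of degree $|\alpha|$ with coefficients controlled by $\|\Gamma\|$. For $1\le j\le m_+$, where $h_j$ is bounded and supported in $\{|y|\le C_{\mathbf f}\}$, on that support $|z-y|\lesssim 1+|z|$, which gives $|\partial^\alpha u_j(t,z)|\le C_*(t)(1+|z|^2)^{|\alpha|/2}u_j(t,z)$ for $|\alpha|\le 2$, and completing the square in $\langle z-y,\Gamma_j(t)(z-y)\rangle$ yields, for each $\varepsilon>0$, $u_j(t,z)\le C_*(t,\varepsilon)e^{-(1-\varepsilon)\pi\langle z,\Gamma_j(t)z\rangle}$. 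For $m_++1\le j\le m$ I would instead use only $h_j\in L^1$, obtaining the uniform bound $|\partial^\alpha u_j(t,z)|\le\|\partial^\alpha g_{\Gamma_j(t)}\|_{L^\infty}\|h_j\|_{L^1}\le C_*(t)$ for $|\alpha|\le 2$, together with the polynomial lower bound $u_j(t,z)\ge c_*(t)(1+|z|^2)^{-n_j}$, which follows from $h_j\ge C_{\mathbf f}^{-1}(1+|\cdot|^2)^{-n_j}$ by restricting the convolution integral to the unit ball centred at $z$.

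Then I would assemble the three conclusions. For \eqref{e:vbound}, $|\nabla\log u_j|=|\nabla u_j|/u_j$: for $j\le m_+$ the factors $u_j$ cancel, leaving $C_*(t)(1+|z|^2)^{1/2}$; for $j>m_+$ the numerator is a constant and $u_j\ge c_*(t)(1+|z|^2)^{-n_j}$, leaving $C_*(t)(1+|z|^2)^{n_j}$; evaluating at $z=B_jx$, absorbing the factor $\|B_j\|^2$, and using $n_j\ge 1$ gives the claim. For \eqref{e:timebound}, $\partial_t\log u_j=\frac1{4\pi}\mathrm{tr}(A_j^{-1}D^2u_j)/u_j$, which by the bounds just established is $\le C_*(t)(1+|z|^2)$ for $j\le m_+$ and $\le C_*(t)(1+|z|^2)^{n_j}$ for $j>m_+$, both dominated by $(1+|x|^2)^{\max\{2,n_j\}}$ at $z=B_jx$. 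For \eqref{e:Ubound}, split the product at $j=m_+$: on the positive factor, $u_j(t,B_jx)\le C_*(t,\varepsilon)e^{-(1-\varepsilon)\pi\langle B_jx,\Gamma_j(t)B_jx\rangle}$ with $c_j>0$ gives $\prod_{j\le m_+}u_j(t,B_jx)^{c_j}\le C_*(t,\varepsilon)\exp(-(1-\varepsilon)\pi\sum_{j=1}^{m_+}c_j\langle B_jx,\Gamma_j(t)B_jx\rangle)$, and $\Gamma_j(t)\ge t^{-1}A_j$ together with $c_j>0$ bounds $\sum_{j=1}^{m_+}c_j\langle B_jx,\Gamma_j(t)B_jx\rangle$ below by $\langle x,t^{-1}M_+x\rangle$; on the negative factor, $u_j\ge c_*(t)(1+|B_jx|^2)^{-n_j}$ with $c_j<0$ gives $\prod_{j>m_+}u_j(t,B_jx)^{c_j}\le C_*(t)(1+|x|^2)^N$. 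Multiplying yields \eqref{e:Ubound}.

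Nothing here is deep, but the bookkeeping needs care, and I expect the main work to lie in two places: (i) verifying that every constant is genuinely \emph{locally uniformly bounded in $t$}, which rests entirely on the continuity and nondegeneracy of $t\mapsto\Gamma_j(t)$ on compact subsets of $(1,\infty)$ and on the fact that $u_j(t,\cdot)$ remains of type $\Gamma_j(t)\le G_j$; and (ii) the split according to the sign of $c_j$ (equivalently, $j\le m_+$ versus $j>m_+$), since the two regimes exploit genuinely different structural features of $\mathcal{N}$ (compact support versus the slow-decay lower bound) and genuinely different estimates (cancellation of $u_j$ in a quotient versus a uniform numerator bound against a polynomial denominator bound). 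A minor point is tracking the $(1-\varepsilon)$ loss produced by completing the square and confirming that $\sum_{j\le m_+}c_jB_j^*\Gamma_j(t)B_j$ dominates $t^{-1}M_+$.
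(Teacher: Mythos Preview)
Your proposal is correct and follows essentially the same route as the paper: the explicit convolution representation $u_j(t,\cdot)=g_{\Gamma_j(t)}*h_j$ with $\Gamma_j(t)=(G_j^{-1}+(t-1)A_j^{-1})^{-1}$, the split according to $j\le m_+$ versus $j>m_+$ exploiting respectively the compact support and the slow-decay lower bound from $\mathcal{N}$, and the comparison $\Gamma_j(t)\ge t^{-1}A_j$ to pass to $M_+$. The one minor deviation is in \eqref{e:timebound}: you reduce $\partial_t\log u_j$ to second spatial derivatives via the PDE, whereas the paper indicates differentiating the explicit kernel in $t$; both are straightforward and yield the stated bound.
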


\begin{proof}
Since $\mathbf{f} \in \mathcal{N}(\mathbf{G})$, we may write $f_j = g_{G_j} \ast h_j$, where $h_j \in \mathcal{N}$. For $j \in \{1,\ldots,m_+\}$ we let $R_j > 0$ be such that the support of $h_j$ is contained in the ball of radius $R_j$ with centre at the origin.

It is easily verified (using, say, the Fourier transform) that $u_j$ can be explicitly written down as 
\begin{equation} \label{e:heatsol}
u_j(t,x_j)
=
g_{P_j(t)} \ast h_j(x_j), 
\end{equation}
where $P_j(t) := (G_j^{-1} + (t-1) A_j^{-1})^{-1}$.

First we prove \eqref{e:Ubound}. If $j \in \{1,\ldots,m_+\}$, then one can easily check that
\begin{align*}
u_j(t,x_j) & = (\det P_j(t))^{1/2} \int_{\mathbb{R}^{n_j}} e^{-\pi \langle x_j -y_j , P_j(t)(x_j-y_j) \rangle} h_j(y_j) \, dy_j  \\
& \leq (\det P_j(t))^{1/2} \bigg( \int_{\mathbb{R}^{n_j}} h_j \bigg) e^{-(1-\varepsilon)\pi \langle x_j,P_j(t)x_j \rangle}
\end{align*}
for $|x_j| \geq \frac{1}{\varepsilon \pi}\|P(t)^{1/2}\| \| P(t)^{-1/2} \| R_j$. It follows that
\begin{equation*} 
u_j(t,x_j) \le C_*(t,\varepsilon) e^{-(1-\varepsilon)\pi \langle x_j,P_j(t)x_j\rangle}
\end{equation*}
for all $x_j \in \mathbb{R}^{n_j}$. By assumption we have $A_j\le G_j $ and as a consequence $P_j(t) \ge A_j/t$. Hence 
\begin{equation*}  
u_j(t,x_j) 
\le
C_*(t,\varepsilon) e^{-(1-\varepsilon)\pi \langle x_j, t^{-1}A_j x_j\rangle}
\end{equation*}
for all $x_j \in \mathbb{R}^{n_j}$.

For $j \in \{m_++1,\ldots,m\}$, we have
\begin{align*}
u_j(t,x_j) & \geq  \int_{\mathbb{R}^{n_j}} (1 + |x_j-P_j(t)^{-1/2}y_j|^2)^{-n_j} e^{-\pi |y_j|^2}  \, dy_j  \\
& \geq C_*(t)^{-1} (1 + |x_j|^2)^{-n_j} 
\end{align*}
for all $x_j \in \mathbb{R}^{n_j}$, where the second lower bound follows by restricting the domain of integration to $|y_j| \leq \|P_j(t)^{-1/2}\|^{-1}$. From the above, we clearly obtain \eqref{e:Ubound}.
 
Next we check \eqref{e:vbound}. For $j \in \{1,\ldots,m_+\}$, we use the compactness of the support of $h_j$ to obtain
\begin{align*}
|\nabla u_j (t,x_j)| & \leq 2\pi \|P_j(t)\| (\det P_j(t))^{1/2} \int_{\mathbb{R}^{n_j}} |x_j-y_j| e^{-\pi \langle x_j -y_j , P_j(t)(x_j-y_j) \rangle} h_j(y_j) \, dy_j  \\
& \leq C_*(t)(1 + |x_j|) u_j(t,x_j)
\end{align*}
for all $x_j \in \mathbb{R}^{n_j}$, and this clearly suffices for \eqref{e:vbound} for such $j$. For $j \in \{m_++1,\ldots,m\}$, we use the fact that $P_j(t) > 0$ to obtain $|w_j| e^{-\pi \langle w_j , P_j(t)w_j \rangle} \leq C_*(t)$ for all $w_j \in \mathbb{R}^{n_j}$, and therefore
$
|\nabla u_j (t,x_j)| \leq C_*(t)
$
for all $x_j \in \mathbb{R}^{n_j}$. It follows that
\[
|\nabla \log u_j (t,x_j)| \leq C_*(t)  (1 + |x_j|^2)^{n_j} 
\]
for all $x_j \in \mathbb{R}^{n_j}$. 

Finally we note that one can essentially follow the above argument for \eqref{e:vbound} in order to show \eqref{e:timebound} and so we omit the details.
\end{proof}
We shall also need the large time asymptotics of the solution to \eqref{e:heateqn}. From the explicit form of the solution \eqref{e:heatsol} we easily see that
\begin{equation} \label{e:largetime}
\lim_{t \to \infty} t^{n_j/2} u_j(t,\sqrt{t}x_j) = \bigg( \int_{\mathbb{R}^{n_j}} f_j \bigg) g_{A_j}(x_j)
\end{equation}
for each $x_j \in \mathbb{R}^{n_j}$.

\section{Closure properties of sub/supersolutions of heat equations} \label{section:cp}

The heat-flow proof of Theorem \ref{t:Geo} in the previous section rested on the non-increasingness of 
\[
\mathfrak{Q}(t) = \int_{\mathbb{R}^n} U(t,x) \, dx,
\]
where $U$ is the ``anisotropic geometric mean" given by
\[
U(t,x) = \prod_{j=1}^m (u_j \circ B_j)^{c_j}
\]
and $u_j$ is a solution to the heat equation $\partial_t u_j = \Delta u_j$ with non-negative initial data. Although it is not immediate from the argument we presented in the previous section, one can show that $U$ satisfies $\partial_t U \leq \Delta U$; that is, $U$ is a \emph{subsolution} of the corresponding heat equation. Formally at least, the monotonicity of $\mathfrak{Q}$ can be seen in this way since
\begin{equation} \label{e:mono_formal}
\mathfrak{Q}'(t) =  \int_{\mathbb{R}^n} \partial_t U(t,x) \, dx \leq  \int_{\mathbb{R}^n} \Delta U(t,x) \, dx = 0.
\end{equation}
In fact, in the case of geometric Brascamp--Lieb data, it is more generally true that if $u_j$ are non-negative \emph{subsolutions} for $j=1,\ldots,m_+$ and non-negative \emph{supersolutions} for $j=m_++1,\ldots,m$, then $\partial_t U \leq \Delta U$; in other words
\begin{equation} \label{e:cp_Igeo}
c_j(\partial_t u_j - \Delta u_j) \leq 0 \quad (j=1,\ldots,m) \qquad \Rightarrow \qquad \partial_t U - \Delta U \leq 0.
\end{equation}
This observation is a kind of reverse counterpart to the observation, or ``closure property", 
\begin{equation} \label{e:cp_Fgeo}
\partial_t u_j - \Delta u_j \geq 0 \quad (j=1,\ldots,m) \qquad \Rightarrow \qquad \partial_t U - \Delta U \geq 0
\end{equation}
recently presented in \cite{BBCrell}; in the manner described above, the closure property \eqref{e:cp_Fgeo} generates the forward version of the geometric Brascamp--Lieb inequality (i.e. $\mathrm{F}(\mathbf{B},\mathbf{c}) = 1$ for geometric Brascamp--Lieb data). The perspective taken in the article \cite{BBCrell} is that it is natural to place oneself in the framework of sub/supersolutions to heat equations since collections of sub/supersolutions are closed under a wide variety of operations (and this is not the case for \emph{bona fide} solutions). As a result, one is able to generate, in a \emph{systematic manner} by combining various closure properties, monotone quantities which underpin a number of fundamental inequalites in geometric analysis and neighbouring fields. 

In this section, we build on \cite{BBCrell} and present a new closure property which generalizes \eqref{e:cp_Igeo} to general Brascamp--Lieb data; later we apply our closure property as a key step in our proof of Theorem \ref{t:QMain}. 

For any Brascamp--Lieb datum $(\mathbf{B},\mathbf{c},\mathcal{Q})$ and gaussian input $\mathbf{A} = (A_j)_{j=1}^m$, with $A_j \in S_+(\mathbb{R}^{n_j})$, we write
\[
M(\mathbf{B},\mathbf{c};\mathbf{A}) := \sum_{j=1}^m c_j B_j^*A_jB_j
\]
and
\[ 
\widetilde{M}(\mathbf{B},\mathbf{c},\mathcal{Q};\mathbf{A}) := M(\mathbf{B},\mathbf{c};\mathbf{A})+\mathcal{Q}.
\]
When there is no danger of any confusion, we shall often omit the dependence on the data and simply write $M(\mathbf{A})$ and $\widetilde{M}(\mathbf{A})$ (sometimes we may also drop the dependence on $\mathbf{A}$).
\begin{theorem} \label{t:cp_IBL}
Let $(\mathbf{B},\mathbf{c},\mathbf{G},\mathcal{Q})$ be a non-degenerate  generalized Brascamp--Lieb datum and suppose $\widetilde{M}(\mathbf{A})$ is positive definite. Assume further that
\begin{equation} \label{e:closurekey1}
c_j(A_j^{-1}  - B_j\widetilde{M}(\mathbf{A})^{-1}B_j^*) \leq 0 
\end{equation}
and
\begin{equation} \label{e:closurekey2}
(A_j^{-1} - B_j\widetilde{M}(\mathbf{A})^{-1}B_j^*)(G_j-A_j) = 0
\end{equation}
for $j=1,\ldots,m$. Given $u_j : (1,\infty) \times \mathbb{R}^{n_j} \to (0,\infty)$ for $j=1,\ldots,m$, let $U : (1,\infty) \times \mathbb{R}^n \to (0,\infty)$ be given by
\begin{equation*}
U(t,x) := t^{-\alpha} \prod_{j=0}^{m+1} u_j(t,B_jx)^{c_j}.
\end{equation*}
Here
$
\alpha := \frac{1}{2}(n - \sum_{j=0}^{m+1} c_jn_j),
$
$c_0=1$, $c_{m+1}=-1$, and $u_0$ and $u_{m+1}$ are given by  
\begin{align*}
	\partial_t u_0 &= \frac1{4\pi} {\rm div}\, (\mathcal{Q}_+^{-1} \nabla u_0),\;\;\; u_0(1) = g_{\mathcal{Q}_+},\\
	\partial_t u_{m+1} &= \frac1{4\pi} {\rm div}\, (\mathcal{Q}_-^{-1} \nabla u_{m+1}),\;\;\; u_{m+1}(1) = g_{\mathcal{Q}_-}. 
\end{align*}

If
\begin{equation} \label{e:u_jsub}
c_j(\partial_t u_j - \frac{1}{4\pi} {\rm div}  (A_j^{-1} \nabla u_j) ) \leq 0 \quad \text{and} \quad D^2(\log u_j) \geq -\frac{2\pi G_j}{t}
\end{equation}
for $j=1,\ldots,m$, then
\begin{equation} \label{e:closuregoal}
\partial_t U - \frac{1}{4\pi} {\rm div}  (\widetilde{M}(\mathbf{A})^{-1} \nabla U) \leq 0. 
\end{equation}
\end{theorem}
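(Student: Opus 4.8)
The plan is to prove \eqref{e:closuregoal} by a direct pointwise computation, expanding the parabolic operator applied to the anisotropic geometric mean $U$ and reducing the resulting inequality to the linear algebraic identity in Lemma \ref{l:Key2}. Write $\mathbf{v}_j := \nabla \log u_j(t,B_jx)$ for $j = 0,1,\ldots,m+1$ (so $\mathbf{v}_0,\mathbf{v}_{m+1}$ correspond to the gaussian factors coming from the decomposition $\mathcal{Q} = B_0^*\mathcal{Q}_+B_0 - B_{m+1}^*\mathcal{Q}_-B_{m+1}$ of Proposition \ref{p:BWdecomp}). First I would compute $\partial_t \log U$ using the heat equations: the subsolution hypothesis \eqref{e:u_jsub} gives, for each $j=1,\ldots,m$,
\[
c_j \partial_t \log u_j(t,B_jx) \le \tfrac{c_j}{4\pi}\big( \langle \mathbf{v}_j, A_j^{-1}\mathbf{v}_j\rangle + {\rm tr}(A_j^{-1} D^2\log u_j) \big),
\]
and equalities for $j=0,m+1$ (with $A_0=\mathcal{Q}_+$, $A_{m+1}=\mathcal{Q}_-$), while the $t^{-\alpha}$ prefactor contributes $-\alpha/t$. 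On the other side, one expands $\frac{1}{4\pi}{\rm div}(\widetilde M^{-1}\nabla U)/U = \frac{1}{4\pi}(\langle \overline{\mathbf{w}}, \widetilde M^{-1}\overline{\mathbf{w}}\rangle + {\rm tr}(\widetilde M^{-1} D^2\log U))$, where $\overline{\mathbf{w}} := \nabla \log U = \sum_{j=0}^{m+1} c_j B_j^*\mathbf{v}_j = \mathcal{Q}$-weighted combination, matching the $\overline{\mathbf{w}}$ of Lemma \ref{l:Key2} once one absorbs the $B_0,B_{m+1}$ terms appropriately.

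The computation then splits into a \emph{first-order part} and a \emph{second-order (Hessian) part}. For the first-order part, I would apply Lemma \ref{l:Key2} (in the form \eqref{e:GeneKey}, with the extended datum that includes $B_0$ with exponent $+1$ and weight $\mathcal{Q}_+$, and $B_{m+1}$ with the role of a negative index), which yields exactly
\[
\sum_{j} c_j \langle \mathbf{v}_j, A_j^{-1}\mathbf{v}_j\rangle \le \langle \overline{\mathbf{w}}, \widetilde M(\mathbf{A})^{-1}\overline{\mathbf{w}}\rangle;
\]
here the surjectivity of $(B_0,\mathbf{B}_+)$ from Proposition \ref{p:BWdecomp} supplies the hypothesis ``$\mathbf{B}_+$ surjective'' needed in Lemma \ref{l:Key2}, and non-degeneracy together with $\widetilde M(\mathbf{A})>0$ supplies ``$M>0$''. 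For the second-order part, I would use the Li--Yau-type bound $D^2(\log u_j) \ge -2\pi G_j/t$ from \eqref{e:u_jsub} (valid for $j=1,\ldots,m$; for $j=0,m+1$ one has the exact identity $D^2(\log u_j)(t) = -2\pi A_j/t \cdot(\text{something})$, in fact $D^2\log g_{P} = -2\pi P$ with $P = (\mathcal{Q}_\pm^{-1}+(t-1)\mathcal{Q}_\pm^{-1})^{-1} = \mathcal{Q}_\pm/t$, giving $-2\pi\mathcal{Q}_\pm/t$). Combining, $D^2(\log U) = \sum_j c_j B_j^* D^2(\log u_j) B_j$, and one needs
\[
{\rm tr}\big(\widetilde M^{-1} D^2\log U\big) \ge -\frac{2\pi}{t} {\rm tr}\Big(\widetilde M^{-1} \sum_j c_j B_j^* G_j B_j\Big) \overset{?}{\ge} -\frac{2\pi}{t}\,n + 4\pi\alpha/\!\ldots,
\]
and this is precisely where the compatibility conditions \eqref{e:closurekey1} and \eqref{e:closurekey2} enter: \eqref{e:closurekey1} controls the sign of the contribution of each index to ${\rm tr}(\widetilde M^{-1} c_j B_j^*(G_j-A_j)B_j)$, while \eqref{e:closurekey2} forces the ``error'' $G_j - A_j$ to live in the kernel of $A_j^{-1}-B_j\widetilde M^{-1}B_j^*$, so that $\sum_j c_j\,{\rm tr}(\widetilde M^{-1}B_j^*G_jB_j)$ collapses to $\sum_j c_j\,{\rm tr}(\widetilde M^{-1}B_j^*A_jB_j) = {\rm tr}(\widetilde M^{-1}\widetilde M) - {\rm tr}(\widetilde M^{-1}\mathcal{Q}) = n - 2\alpha - (\text{the }B_0,B_{m+1}\text{ bookkeeping})$, matching $4\pi\alpha/t$ after accounting for the $j=0,m+1$ terms. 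Assembling the first- and second-order estimates gives $\partial_t\log U \le \frac{1}{4\pi}{\rm div}(\widetilde M^{-1}\nabla U)/U$, which is \eqref{e:closuregoal}.

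The main obstacle I expect is the \textbf{bookkeeping with the gaussian factors $u_0, u_{m+1}$} and verifying that the extended datum $((B_0,\mathbf{B},B_{m+1}), (1,\mathbf{c},-1))$ genuinely satisfies the hypotheses of Lemma \ref{l:Key2} with $M$ there equal to $\widetilde M(\mathbf{A})$ here — in particular that $\ker \mathbf{B}_+ \subseteq \ker B_{m+1}$ (from Proposition \ref{p:BWdecomp}) is exactly what makes the restriction of the new ``$\mathbf{B}_+$'' (which is $(B_0,\mathbf{B}_+)$) bijective, so the choice of $x_*$ in Lemma \ref{l:Key2} is legitimate. A secondary technical point is checking that $D^2(\log u_0) = -2\pi \mathcal{Q}_+/t \le -2\pi \mathcal{Q}_+/t$ trivially satisfies its Li--Yau bound with $G_0 = \mathcal{Q}_+$, i.e.\ that the gaussian factors are consistent with treating $G_0 = A_0 = \mathcal{Q}_+$ and $G_{m+1} = A_{m+1} = \mathcal{Q}_-$ (for which \eqref{e:closurekey2} holds vacuously since $G_j - A_j = 0$, and \eqref{e:closurekey1} must be checked by hand). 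Once these structural identifications are in place, the remaining manipulations — combining the two trace computations with the quadratic-form inequality and tracking the power of $t$ against $\alpha$ — are routine, though somewhat lengthy, and I would organize them so that the role of each hypothesis \eqref{e:closurekey1}, \eqref{e:closurekey2}, and the second half of \eqref{e:u_jsub} is isolated.
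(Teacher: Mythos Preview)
Your proposal is correct and follows essentially the same route as the paper: expand $\partial_t U/U$ and $\tfrac{1}{4\pi}\,{\rm div}(\widetilde M^{-1}\nabla U)/U$, split into a first-order quadratic term (handled by \eqref{e:GeneKey} in Lemma~\ref{l:Key2} applied to the extended datum with $(B_0,\mathbf{B}_+)$ surjective) and a second-order trace term (handled by \eqref{e:closurekey1}, the Li--Yau bound in \eqref{e:u_jsub}, and \eqref{e:closurekey2}). One small clarification: for $j=0,m+1$ you do \emph{not} need to verify \eqref{e:closurekey1}; since $u_0,u_{m+1}$ are exact heat evolutions of Gaussians one has the \emph{equality} $D^2(\log u_j)=-2\pi A_j/t$, so the trace term ${\rm tr}\big(c_j(A_j^{-1}-B_j\widetilde M^{-1}B_j^*)D^2\log u_j\big)$ is computed exactly regardless of the sign of $A_j^{-1}-B_j\widetilde M^{-1}B_j^*$, which is how the paper handles those indices.
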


\begin{remarks} (1) When we apply Theorem \ref{t:cp_IBL}, $\mathbf{A}$ will be such that the gaussian input $(g_{A_j})_{j=1}^m$ minimizes the inverse Brascamp--Lieb constant over all gaussian inputs of type $\mathbf{G}$. For such a minimizer, we shall see (in Lemma \ref{l:mono_key}) that conditions of the form \eqref{e:closurekey1} and \eqref{e:closurekey2} are satisfied.

(2) Suppose $u_j : (1,\infty) \times \mathbb{R}^{n_j} \to (0,\infty)$ is a \emph{solution} to the heat equation
\begin{equation*}
\partial_t {u}_j = \frac1{4\pi} {\rm div}\, (A_j^{-1}\nabla {u}_j ), \qquad {u}_j(1,x_j) = f_j(x_j),
\end{equation*}
where $f_j \in \mathcal{T}(G_j)$.
Then we know from \eqref{e:heatsol} that $u_j \in \mathcal{T}((G_j^{-1} + (t-1) A_j^{-1})^{-1})$. Thus, if $A_j \leq G_j$ then we have $u_j \in \mathcal{T}(t^{-1}G_j)$ and it follows from Lemma \ref{l:LiYau} that $D^2(\log u_j) \geq -2\pi t^{-1} G_j$. In this sense, the log-convexity component in the assumption \eqref{e:u_jsub} is reasonable. At the end of this subsection, we add a few further remarks on this.

(3) Theorem \ref{t:cp_IBL} is very much in the spirit of \cite[Theorem 3.7]{BBCrell} in which it is shown that $U$ is a supersolution in the case $\mathcal{Q}=0$, $c_j > 0$, and $A_j = G_j$ for each $j$; that is, in such a setting, if $A_j^{-1}  - B_jM(\mathbf{A})^{-1}B_j^* \geq 0$ for each $j$, then we have $\partial_t U - \frac{1}{4\pi} {\rm div}  (M(\mathbf{A})^{-1} \nabla U) \geq 0$ under the assumption that 
\begin{equation*}
\partial_t u_j - \frac{1}{4\pi} {\rm div}  (A_j^{-1} \nabla u_j) \geq 0 \quad \text{and} \quad D^2(\log u_j) \geq -\frac{2\pi A_j}{t}.
\end{equation*}
In fact, \cite[Theorem 3.7]{BBCrell} also provides the conclusion that $U$ obeys the log-convexity inequality
\[
D^2(\log U) \geq -\frac{2\pi M(\mathbf{A})}{2t},
\]
which can be easily deduced from the identity
\begin{equation} \label{e:hessian}
D^2(\log U) = \sum_{j=1}^m c_j B_j^* D^2(\log u_j) B_j
\end{equation}
and the positivity of each $c_j$. In the setting of mixed signatures for the $c_j$ in Theorem \ref{t:cp_IBL}, a closure property related  to the log-convexity assumption seems less apparent.
\end{remarks}

\begin{proof}[Proof of Theorem \ref{t:cp_IBL}]
For simplicity, we write $\widetilde{M}= \widetilde{M}(\mathbf{A})$ in this proof. 
Let us denote $A_0 := \mathcal{Q}_+$ and $A_{m+1} := \mathcal{Q}_-$ in which case the decomposition \eqref{e:DecompQ} can be seen as $\mathcal{Q} = c_0 B_0^*A_0 B_0 +c_{m+1} B_{m+1}^* A_{m+1}B_{m+1}$.  
By straightforward computations
\begin{equation} \label{e:Utime}
\frac{\partial_t U}{U}(t,x) = -\frac{\alpha}{t} + \sum_{j=0}^{m+1} c_j \frac{\partial_tu_j}{u_j}(t,B_jx).
\end{equation}
Similarly, we have
\begin{equation} \label{e:Uspace}
\nabla U(t,x) = U(t,x) \sum_{j=0}^{m+1} c_j B_j^*\mathbf{v}_j(t,B_jx),
\end{equation}
where $\mathbf{v}_j:= \nabla \log u_j$, from which it follows that
\[
\frac{ {\rm div}  (\widetilde{M}^{-1} \nabla U)}{U}(t,x) = \sum_{j=0}^{m+1} c_j {\rm div}(B_j\widetilde{M}^{-1}B_j^*\mathbf{v}_j)  + \langle \overline{\mathbf{w}},\widetilde{M}^{-1}\overline{\mathbf{w}}\rangle, 
\]
where $\overline{\mathbf{w}} := \sum_{j=0}^{m+1} c_j B_j^*\mathbf{v}_j$. Here, and in what follows, on the right-hand side we are suppressing the argument of the functions; precisely speaking, we should write $u_j(t,B_jx)$, $\mathbf{v}_j(t,B_jx)$ and $\overline{\mathbf{w}}(t,x)$.

From \eqref{e:u_jsub} and definitions of $u_0, u_{m+1}$, we have that 
\begin{align*}
\frac{\partial_t U}{U}(t,x) & \leq -\frac{\alpha}{t} + \frac{1}{4\pi} \sum_{j=0}^{m+1} c_j \frac{{\rm div}(A_j^{-1}\nabla u_j)}{u_j} \\
& =  -\frac{\alpha}{t} + \frac{1}{4\pi} \sum_{j=0}^{m+1} c_j  {\rm div} (A_j^{-1} \mathbf{v}_j)  +  \frac{1}{4\pi}\sum_{j=0}^{m+1} c_j \langle \mathbf{v}_j, A_j^{-1} \mathbf{v}_j  \rangle
\end{align*}
and therefore
\[
U^{-1}\bigg(\partial_t U -  \frac{1}{4\pi}{\rm div}  (\widetilde{M}^{-1} \nabla U)\bigg)(t,x) \leq \frac{1}{4\pi}(I + II),
\]
where
\begin{align*}
I & :=  - \langle \overline{\mathbf{w}},\widetilde{M}^{-1}\overline{\mathbf{w}}\rangle  + \sum_{j=0}^{m+1} c_j \langle \mathbf{v}_j, A_j^{-1} \mathbf{v}_j  \rangle\\
II & :=  - \frac{4\pi \alpha}{t} + \sum_{j=0}^{m+1} c_j  {\rm div}((A_j^{-1} - B_j\widetilde{M}^{-1}B_j^*)\mathbf{v}_j).
\end{align*}
Since $(B_0,\mathbf{B}_+)$ is surjective from the non-degeneracy assumption,  we know from \eqref{e:GeneKey} in Lemma \ref{l:Key2} that $I \leq 0$.
For $II$, we write
\[
II = - \frac{4\pi \alpha}{t} + \sum_{j=0}^{m+1}   {\rm tr}(c_j(A_j^{-1} - B_j\widetilde{M}^{-1}B_j^*) D^2 \log u_j).
\]
From \eqref{e:closurekey1}, \eqref{e:u_jsub} and \eqref{e:closurekey2}, it follows that {for $j=1,\ldots,m$,} 
\begin{align*}
 {\rm tr}(c_j(A_j^{-1} - B_j\widetilde{M}^{-1}B_j^*) D^2 \log u_j) & \leq  -\frac{2\pi}{t} {\rm tr}(c_j(A_j^{-1} - B_j\widetilde{M}^{-1}B_j^*) G_j) \\
& =  -\frac{2\pi c_j}{t} {\rm tr}((A_j^{-1} - B_j\widetilde{M}^{-1}B_j^*) A_j) \\
& = -\frac{2\pi c_j}{t} (n_j - {\rm tr}( \widetilde{M}^{-1}B_j^*A_jB_j) ).
\end{align*}
On the other hand, for $j=0,m+1$, from the explicit formula \eqref{e:heatsol} we know that 
$$
D^2(\log u_j) = -\frac{2\pi {A}_j}{t}.
$$
Hence,  {regardless of the sign of $A_j^{-1} - B_j \widetilde{M}^{-1}B_j^*$}, we have that 
\begin{align*} 
 {\rm tr}(c_j(A_j^{-1} - B_j\widetilde{M}^{-1}B_j^*) D^2 \log u_j) & {=}  -\frac{2\pi}{t} {\rm tr}(c_j(A_j^{-1} - B_j\widetilde{M}^{-1}B_j^*) A_j) \\
& = -\frac{2\pi c_j}{t} (n_j - {\rm tr}( \widetilde{M}^{-1}B_j^*A_jB_j) )
\end{align*}
for $j=0, m+1$. Therefore, from the definition of $\widetilde{M}$, we get
\begin{equation*}
II \leq - \frac{4\pi \alpha}{t} -\frac{2\pi}{t} \sum_{j=0}^{m+1} c_j   (n_j - {\rm tr}( \widetilde{M}^{-1}B_j^*A_jB_j) ) = 0. \qedhere
\end{equation*}
\end{proof}

We end this subsection with two further observations related to Remarks (2) and (3) after the statement of Theorem \ref{t:cp_IBL}. The first concerns the analogue of Theorem \ref{t:cp_IBL} for which $c_j > 0$ for all $j=1,\ldots,m$ (in which case we drop the non-degenerate condition and assume $\mathcal{Q}\ge0$, namely $n_{m+1}=0$). It is clear from the above proof (and \eqref{e:hessian}) that in such a case, if we assume
\[
A_j^{-1} - B_j\widetilde{M}(\mathbf{A})^{-1}B_j^* \geq 0
\]
and \eqref{e:closurekey2}, then
\begin{equation} \label{e:u_jsuper}
\partial_t u_j - \frac{1}{4\pi} {\rm div}  (A_j^{-1} \nabla u_j)  \geq 0 \quad \text{and} \quad D^2(\log u_j) \geq -\frac{2\pi G_j}{t}
\end{equation}
for $j=1,\ldots,m$ imply
\begin{equation} \label{e:closuregoal_super}
\partial_t U - \frac{1}{4\pi} {\rm div}  (\widetilde{M}(\mathbf{A})^{-1} \nabla U) \geq 0 \quad \text{and} \quad D^2(\log U) \geq -\frac{2\pi \widetilde{M}(\mathbf{G})}{2t}. 
\end{equation}
This observation extends \cite[Theorem 3.7]{BBCrell} to the setting $\mathbf{A} \leq \mathbf{G}$ and $\mathcal{Q}\ge0$.

Our second observation here concerns the log-convexity assumption in \eqref{e:u_jsub}, and we claim that it would be reasonable to assume
\begin{equation} \label{e:logconv_harmonic}
D^2(\log u_j) \geq -2\pi (G_j^{-1} + (t-1)A_j^{-1})^{-1}.
\end{equation}
Indeed, as we have noted several times, solutions of the heat equation \eqref{e:heateqn} with $u_j(1,\cdot) \in \mathcal{T}(G_j)$ satisfy $u_j(t,\cdot) \in \mathcal{T}((G_j^{-1} + (t-1)A_j^{-1})^{-1})$, and hence \eqref{e:logconv_harmonic} by Lemma \ref{l:LiYau}. We claim that, when $c_j > 0$ for all $j=1,\ldots,m$, then \eqref{e:logconv_harmonic} is also closed under the operation $(u_1,\ldots,u_m) \mapsto U$ in the sense that
\begin{equation} \label{e:logconv_HM_closed}
D^2(\log U) \geq -2\pi (\widetilde{M}(\mathbf{G})^{-1} + (t-1)\widetilde{M}(\mathbf{A})^{-1})^{-1}.
\end{equation}
To see this, following the notation in \cite{Hiai} we write
\[
X : Y := (X^{-1} + Y^{-1})^{-1}
\]
for the harmonic mean of the positive semi-definite transformations $X$ and $Y$, and note the fundamental facts (see, for example, \cite[Corollary 3.1.6]{Hiai}):

($\mathrm{I}$) $S^*(X : Y) S \leq (S^*XS) : (S^*YS)$. 

($\mathrm{II}$) $(X_1 : Y_1) + (X_2 : Y_2) \leq (X_1 + X_2) : (Y_1 + Y_2)$.

 If we first use \eqref{e:hessian} and \eqref{e:logconv_harmonic}, we get
\[
D^2(\log U) \geq -2\pi \sum_{j=0}^m c_j B_j^* (G_j : A_{j,t}) B_j
\]
where $c_0=1$, $A_0=G_0=\mathcal{Q}_+$ and $A_{j,t} := (t-1)^{-1}A_j$. However
\begin{align*}
 \bigg( \sum_{j=0}^m c_j B_j^* G_j B_j \bigg) : \bigg(  \sum_{j=0}^m c_j B_j^* A_{j,t} B_j \bigg) & \geq \sum_{j=0}^m (c_j B_j^*G_jB_j : c_j B_j^*A_{j,t}B_j) \\
& =  \sum_{j=0}^m c_j (B_j^*G_jB_j : B_j^*A_{j,t}B_j) \\
& \geq \sum_{j=0}^m c_j B_j^* (G_j : A_{j,t} )B_j,
\end{align*}
where we have successively applied ($\mathrm{II}$) (in an iterative way for sums of $m$ transformations), linearity, and ($\mathrm{I}$). This yields \eqref{e:logconv_HM_closed}.

We incorporate the preceding observation into the following (independent) result in the spirit of \cite[Theorem 3.7]{BBCrell}.
\begin{theorem}\label{t:Forwardcp} 
Let $(\mathbf{B},\mathbf{c})$ be a Brascamp--Lieb datum with $c_j > 0$ for all $j=1,\ldots,m$ and $\mathcal{Q} = B_0^*\mathcal{Q}_+B_0\ge0$. Suppose $\mathbf{A} = (A_j)_{j=1}^m, \mathbf{G} = (G_j)_{j = 1}^m$, with $A_j, G_j \in S_+(\mathbb{R}^{n_j})$, are such that $\widetilde{M}(\mathbf{A})$ and $\widetilde{M}(\mathbf{G})$ are positive definite. Assume further that
\begin{equation*} 
A_j^{-1}  - B_j\widetilde{M}(\mathbf{A})^{-1}B_j^* \geq 0 
\end{equation*}
and
\begin{equation*} 
(A_j^{-1} - B_j\widetilde{M}(\mathbf{A})^{-1}B_j^*)(G_j-A_j) = 0
\end{equation*}
for $j=1,\ldots,m$. Given $u_j : (1,\infty) \times \mathbb{R}^{n_j} \to (0,\infty)$ for $j=1,\ldots,m$, let $U : (1,\infty) \times \mathbb{R}^n \to (0,\infty)$ be given by
\begin{equation*}
U(t,x) := t^{-\alpha} \prod_{j=0}^m u_j(t,B_jx)^{c_j},
\end{equation*}
where
$
\alpha := \frac{1}{2}(n - \sum_{j=0}^m c_jn_j),
$
$c_0=1$, and $u_0$ be as in Theorem \ref{t:cp_IBL}. 
 If
\begin{equation*}
\partial_t u_j \geq \frac{1}{4\pi} {\rm div}  (A_j^{-1} \nabla u_j) )  \quad \text{and} \quad D^2(\log u_j) \geq -2\pi (G_j^{-1} + (t-1)A_j^{-1})^{-1}
\end{equation*}
for $j=1,\ldots,m$, then
\begin{equation*}
\partial_t U \geq \frac{1}{4\pi} {\rm div}  (\widetilde{M}(\mathbf{A})^{-1} \nabla U)  \quad \text{and} \quad D^2(\log U) \geq -2\pi (\widetilde{M}(\mathbf{G})^{-1} + (t-1)\widetilde{M}(\mathbf{A})^{-1})^{-1}.
\end{equation*}
\end{theorem}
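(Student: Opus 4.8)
The plan is to transcribe the proof of Theorem~\ref{t:cp_IBL}, reversing every inequality (which is legitimate here because every exponent, including $c_0=1$, is positive), and then to append the log-convexity conclusion via the harmonic-mean manipulation recorded in the paragraph immediately preceding the statement. Concretely, I would adjoin the index $j=0$ exactly as in Theorem~\ref{t:cp_IBL}, setting $A_0 := \mathcal Q_+$ and (for the second conclusion) $G_0 := \mathcal Q_+$, so that $\mathcal Q = c_0 B_0^*A_0B_0$, $\widetilde M(\mathbf A) = \sum_{j=0}^m c_j B_j^*A_jB_j$, and $u_0(t,\cdot) = g_{A_0/t}$ with $D^2(\log u_0) = -2\pi A_0/t = -2\pi(G_0^{-1}+(t-1)A_0^{-1})^{-1}$. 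Running the computation in the proof of Theorem~\ref{t:cp_IBL} verbatim, now using $\partial_t u_j \ge \frac{1}{4\pi}{\rm div}(A_j^{-1}\nabla u_j)$ (with equality at $j=0$), produces
\[
U^{-1}\Big(\partial_t U - \tfrac{1}{4\pi}{\rm div}(\widetilde M(\mathbf A)^{-1}\nabla U)\Big) \;\ge\; \tfrac{1}{4\pi}(I+II),
\]
with $I$ and $II$ exactly as in that proof but summed over $j=0,\dots,m$; thus it suffices to show $I\ge0$ and $II\ge0$.

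For $I\ge0$: this is the reverse of \eqref{e:GeneKey}, and since $B_0$ is surjective and all $c_j>0$ it follows from Cauchy--Schwarz in the inner products $\langle\,\cdot\,,A_j\,\cdot\,\rangle$, exactly along the lines of the geometric computation in Section~\ref{section:prelims} but with $\sum_{j=0}^m c_j B_j^*A_jB_j = \widetilde M(\mathbf A)$ in place of the identity. For $II\ge0$: the term $j=0$ contributes precisely $-\tfrac{2\pi c_0}{t}\big(n_0 - {\rm tr}(\widetilde M(\mathbf A)^{-1}B_0^*A_0B_0)\big)$, just as in Theorem~\ref{t:cp_IBL}, because $D^2(\log u_0) = -2\pi A_0/t$. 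For $j=1,\dots,m$, writing $E_j := A_j^{-1} - B_j\widetilde M(\mathbf A)^{-1}B_j^*\ge0$ and using the log-convexity hypothesis, one has ${\rm tr}(c_j E_j D^2\log u_j) \ge -2\pi\,{\rm tr}\big(c_j E_j(G_j^{-1}+(t-1)A_j^{-1})^{-1}\big)$, and the crucial step --- the one place that goes beyond a sign flip of the earlier proof --- is the identity ${\rm tr}\big(E_j(G_j^{-1}+(t-1)A_j^{-1})^{-1}\big) = \tfrac{1}{t}{\rm tr}(E_j A_j)$. To prove it I would conjugate by $A_j^{1/2}$: with $F_j := A_j^{1/2}B_j\widetilde M(\mathbf A)^{-1}B_j^*A_j^{1/2}$ and $K_j := A_j^{1/2}G_j^{-1}A_j^{1/2}$ we get $A_j^{1/2}E_jA_j^{1/2} = I-F_j\ge0$, the complementary-slackness hypothesis \eqref{e:closurekey2} rewrites as $(I-F_j)K_j = I-F_j$, hence $(I-F_j)(K_j+(t-1)I) = t(I-F_j)$, and inverting and taking traces gives the identity (note $K_j+(t-1)I>0$ since $t>1$). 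Since ${\rm tr}(E_jA_j) = n_j - {\rm tr}(\widetilde M(\mathbf A)^{-1}B_j^*A_jB_j)$, summing over $j=0,\dots,m$ and using $\sum_{j=0}^m c_j B_j^*A_jB_j = \widetilde M(\mathbf A)$ together with $\alpha = \tfrac12(n - \sum_{j=0}^m c_j n_j)$ yields $II\ge0$, just as the analogous cancellation closes the argument in Theorem~\ref{t:cp_IBL}. This proves the supersolution assertion.

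For the log-convexity conclusion I would follow verbatim the argument sketched in the paragraph preceding the statement: \eqref{e:hessian}, extended to include $j=0$ (where $D^2\log u_0 = -2\pi(G_0^{-1}+(t-1)A_0^{-1})^{-1}$ with $G_0 = \mathcal Q_+$), gives $D^2(\log U) \ge -2\pi\sum_{j=0}^m c_j B_j^*\big(G_j : \tfrac{1}{t-1}A_j\big)B_j$, and then iterating fact~$(\mathrm{II})$, using $c_j(X:Y) = (c_jX):(c_jY)$, and applying fact~$(\mathrm{I})$ bounds the right-hand side below by $-2\pi\big(\widetilde M(\mathbf G):\tfrac{1}{t-1}\widetilde M(\mathbf A)\big) = -2\pi(\widetilde M(\mathbf G)^{-1}+(t-1)\widetilde M(\mathbf A)^{-1})^{-1}$, where we use $\sum_{j=0}^m c_j B_j^*G_jB_j = \widetilde M(\mathbf G)$ and its positive-definiteness.

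The only genuinely new ingredient, beyond a sign-reversed copy of the proof of Theorem~\ref{t:cp_IBL} and the harmonic-mean bookkeeping already recorded above the statement, is the trace identity ${\rm tr}(E_j(G_j^{-1}+(t-1)A_j^{-1})^{-1}) = \tfrac{1}{t}{\rm tr}(E_jA_j)$; I expect that --- together with keeping the $j=0$ (that is, the $\mathcal Q$) block straight throughout --- to be the only step requiring care. There are no analytic subtleties, since $U$ is built from genuine smooth positive solutions of the relevant heat equations and the conclusion is a pointwise differential inequality.
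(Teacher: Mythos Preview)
Your proposal is correct and follows the paper's route: the paper presents Theorem~\ref{t:Forwardcp} as the combination of (i) the sign-reversed version of the proof of Theorem~\ref{t:cp_IBL} and (ii) the harmonic-mean computation \eqref{e:logconv_HM_closed}, and that is exactly what you do. You have also correctly isolated the one step that the paper leaves implicit --- namely, that when the log-convexity hypothesis is upgraded from $-2\pi G_j/t$ to $-2\pi(G_j^{-1}+(t-1)A_j^{-1})^{-1}$, the treatment of $II$ still closes --- and your conjugation argument for the identity ${\rm tr}\big(E_j(G_j^{-1}+(t-1)A_j^{-1})^{-1}\big)=\tfrac1t{\rm tr}(E_jA_j)$ is valid (in fact your computation gives the stronger matrix identity $E_j(G_j^{-1}+(t-1)A_j^{-1})^{-1}=\tfrac1t E_jA_j$, since $(I-F_j)(K_j+(t-1)I)^{-1}=\tfrac1t(I-F_j)$).
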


\section{Proof Theorem \ref{t:QMain}} \label{section:MainProof}

The proof makes use of the key decomposition in Proposition \ref{p:BWdecomp}. 

\emph{Section \ref{section:B_0=0_heatflow}.} Inspired by ideas in \cite{BCCT}, we begin by establishing Theorem \ref{t:QMain} in the so-called ``gaussian extremizable" case (see Definition \ref{d:gaussian_ext} below for the precise definition) via a heat-flow monotonicity argument; see Theorem \ref{t:GkerGExt}. At this stage, we appeal to Theorem \ref{t:cp_IBL}. 

\emph{Section \ref{section:B_0=0_globaldata}.} In order to effectively reduce to the gaussian extremizable case, we introduce the notion of ``amplifying data" (Definition \ref{d:globalised_data}), for which gaussian extremizers always exist, and then complete the proof of Theorem \ref{t:QMain} by showing that arbitrary non-degenerate Brascamp--Lieb data can be approximated in an appropriate sense by amplifying data.

Before beginning the proof of Theorem \ref{t:QMain}, we recall the notation
\[
M(\mathbf{B},\mathbf{c};\mathbf{A}) := \sum_{j=1}^m c_jB_j^*A_jB_j
\]
for any Brascamp--Lieb datum $(\mathbf{B},\mathbf{c})$ and input $\mathbf{A} = (A_j)_{j=1}^m$, with $A_j \in  S_+(\mathbb{R}^{n_j})$.
We shall write $\mathbf{A}_+ := (A_j)_{j=1}^{m_+}$ and $\mathbf{A}_- := (A_j)_{j=m_++1}^{m}$, and similarly for $\mathbf{c}_\pm$ and $\mathbf{G}_\pm$. Using this notation, for example, the non-degeneracy condition \eqref{e:NondegG} becomes
\[
M(\mathbf{B}_+,\mathbf{c}_+;\mathbf{G}_+) + \mathcal{Q} > 0.
\]

Also it is natural to introduce the class 
\begin{equation}\label{Def:LambdaBcQ}
\Lambda(\mathbf{B},\mathbf{c},\mathcal{Q}) := \bigg\{ \mathbf{A} = (A_j)_{j=1}^m : A_j \in S_+(\mathbb{R}^{n_j}), M(\mathbf{B},\mathbf{c};\mathbf{A}) +\mathcal{Q} > 0 \bigg \}.
\end{equation}
For example, by definition, we have ${\rm BL}(\mathbf{B},\mathbf{c},\mathcal{Q}; \mathbf{A}) < \infty$ if and only if $\mathbf{A} \in \Lambda(\mathbf{B},\mathbf{c},\mathcal{Q})$. Also we note that the non-degenerate condition \eqref{e:NondegBW} ensures that $\Lambda(\mathbf{B},\mathbf{c},\mathcal{Q})$ is non-empty; see \cite[Proposition 2.2]{BW}. In fact, we have already used this observation in the remark after Lemma \ref{l:limit} in showing that Theorem \ref{t:QMain} implies Theorem \ref{t:BWQ}.

\subsection{Gaussian extremizable data} \label{section:B_0=0_heatflow}
First we introduce the definition of gaussian extremizable data and then proceed to show that Theorem \ref{t:QMain} holds for such data.
\begin{definition}[Gaussian extremizable data] \label{d:gaussian_ext}
	The generalized Brascamp--Lieb datum $(\mathbf{B},\mathbf{c},\mathbf{G},\mathcal{Q})$ is said to be \textit{gaussian extremizable} if 
	\begin{equation*}
	{{\rm I}_{\mathrm{g}}}(\mathbf{B},\mathbf{c},\mathcal{Q},\mathbf{G}) = {\rm BL}(\mathbf{B},\mathbf{c},\mathcal{Q};\mathbf{A}), 
	\end{equation*}
	for some $\mathbf{A} \in \Lambda(\mathbf{B},\mathbf{c},\mathcal{Q})$ with $ \mathbf{A} \leq \mathbf{G}$, and (with a slight abuse of terminology) we refer to such $\mathbf{A}$ as a \emph{gaussian extremizer}.
\end{definition}

\begin{theorem}\label{t:GkerGExt}
	Let $(\mathbf{B},\mathbf{c},\mathcal{Q},\mathbf{G})$ be a non-degenerate  generalized Brascamp--Lieb datum. Then the following are equivalent.

(1) $(\mathbf{B},\mathbf{c},\mathcal{Q},\mathbf{G})$ is gaussian extremizable. 

(2) There exists $\mathbf{A} \in \Lambda(\mathbf{B},\mathbf{c},\mathcal{Q})$ with $ \mathbf{A} \leq \mathbf{G}$ satisfying \eqref{e:closurekey1} and \eqref{e:closurekey2} for $j=1,\ldots,m$.  

(3) There exists $\mathbf{A} \in \Lambda(\mathbf{B},\mathbf{c},\mathcal{Q})$ with $ \mathbf{A} \leq \mathbf{G}$, such that 
		$$
		{\rm I}(\mathbf{B},\mathbf{c},\mathcal{Q},\mathbf{G}) = 
		{\rm I}_{\mathrm{g}}(\mathbf{B},\mathbf{c},\mathcal{Q},\mathbf{G})
		=
		{\rm BL}(\mathbf{B},\mathbf{c},\mathcal{Q};\mathbf{A}) \in (0,\infty).
		$$
\end{theorem}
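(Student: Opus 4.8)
\textbf{Proof plan for Theorem \ref{t:GkerGExt}.}
The plan is to establish the chain of implications $(1) \Rightarrow (2) \Rightarrow (3) \Rightarrow (1)$, with the bulk of the work residing in $(2) \Rightarrow (3)$ since this is where the heat-flow monotonicity machinery of Theorem \ref{t:cp_IBL} enters. The implication $(3) \Rightarrow (1)$ is essentially immediate: if (3) holds then in particular ${\rm I}_{\mathrm{g}}(\mathbf{B},\mathbf{c},\mathcal{Q},\mathbf{G}) = {\rm BL}(\mathbf{B},\mathbf{c},\mathcal{Q};\mathbf{A})$ for the given $\mathbf{A} \leq \mathbf{G}$, which is precisely the definition of gaussian extremizability. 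So the genuine content is $(1) \Rightarrow (2)$ and $(2) \Rightarrow (3)$.

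For $(1) \Rightarrow (2)$, I would argue that a gaussian extremizer $\mathbf{A} \leq \mathbf{G}$ must satisfy the stated first-order (and complementary-slackness) conditions. The idea is a Lagrange-multiplier / variational argument: $\mathbf{A}$ minimizes $\mathbf{A} \mapsto {\rm BL}(\mathbf{B},\mathbf{c},\mathcal{Q};\mathbf{A}) = \big(\prod_j(\det A_j)^{c_j}/\det\widetilde{M}(\mathbf{A})\big)^{1/2}$ over the constraint set $\{\mathbf{A} \in \Lambda(\mathbf{B},\mathbf{c},\mathcal{Q}) : \mathbf{A} \leq \mathbf{G}\}$. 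Writing $\Phi(\mathbf{A}) := \log{\rm BL} = \tfrac12\big(\sum_j c_j \log\det A_j - \log\det\widetilde{M}(\mathbf{A})\big)$ and computing the directional derivative in a symmetric direction $H_j$ at $A_j$, one finds (using $D_X \log\det X[H] = {\rm tr}(X^{-1}H)$ and $D_{A_j}\widetilde{M}[H_j] = c_j B_j^* H_j B_j$) that
\[
D_{A_j}\Phi[H_j] = \tfrac{c_j}{2}\,{\rm tr}\big((A_j^{-1} - B_j\widetilde{M}(\mathbf{A})^{-1}B_j^*)H_j\big).
\]
At a constrained minimum, for directions $H_j$ that keep $A_j$ inside the interior (both signs admissible where $A_j < G_j$) this derivative vanishes on the relevant subspace, forcing $(A_j^{-1} - B_j\widetilde{M}(\mathbf{A})^{-1}B_j^*)(G_j - A_j) = 0$, which is \eqref{e:closurekey2}; and for directions pushing $A_j$ toward the active constraint $A_j = G_j$ the derivative must be $\leq 0$ (decreasing $A_j$ cannot decrease $\Phi$), which after using the complementary slackness gives the sign condition \eqref{e:closurekey1}, namely $c_j(A_j^{-1} - B_j\widetilde{M}(\mathbf{A})^{-1}B_j^*) \leq 0$. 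Some care is needed with the signs of $c_j$ and with the precise geometry of the constraint $A_j \leq G_j$ (a matrix, not scalar, inequality), but this is the standard KKT bookkeeping.

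For $(2) \Rightarrow (3)$, I would run the heat-flow monotonicity argument. Given $\mathbf{A} \leq \mathbf{G}$ satisfying \eqref{e:closurekey1}, \eqref{e:closurekey2}, and given $\mathbf{f} \in \mathcal{N}(\mathbf{G})$ (which suffices by Lemma \ref{l:ApproxG}), solve the anisotropic heat equations \eqref{e:heateqn} $\partial_t u_j = \tfrac1{4\pi}{\rm div}(A_j^{-1}\nabla u_j)$ with $u_j(1,\cdot) = f_j$; since $f_j \in T(G_j)$ and $A_j \leq G_j$, Lemma \ref{l:LiYau} / \eqref{e:heatsol} give $D^2(\log u_j) \geq -2\pi t^{-1} G_j$, so the hypotheses \eqref{e:u_jsub} of Theorem \ref{t:cp_IBL} hold (with equality in the first, as $u_j$ is a bona fide solution). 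Form $U(t,x) = t^{-\alpha}\prod_{j=0}^{m+1} u_j(t,B_jx)^{c_j}$ with $u_0, u_{m+1}$ the gaussian evolutions from the decomposition $\mathcal{Q} = B_0^*\mathcal{Q}_+B_0 - B_{m+1}^*\mathcal{Q}_-B_{m+1}$ of Proposition \ref{p:BWdecomp}, and define $\mathfrak{Q}(t) := \int_{\mathbb{R}^n} U(t,x)\,dx$. Theorem \ref{t:cp_IBL} gives $\partial_t U \leq \tfrac1{4\pi}{\rm div}(\widetilde{M}(\mathbf{A})^{-1}\nabla U)$, hence $\mathfrak{Q}'(t) \leq \int {\rm div}(\cdots) = 0$ formally; the decay estimates of Lemma \ref{l:heatdecay} (which bound $U$ by an integrable gaussian times a polynomial, and control $\nabla\log u_j$, $\partial_t\log u_j$) are exactly what is needed to justify differentiation under the integral and the vanishing of the boundary term, so $\mathfrak{Q}$ is genuinely non-increasing on $(1,\infty)$. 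It remains to identify the two endpoints. One should check $\lim_{t \downarrow 1}\mathfrak{Q}(t) \geq {\rm BL}(\mathbf{B},\mathbf{c},\mathcal{Q};\mathcal{Q};\text{err})$—more precisely, that $\mathfrak{Q}(1^+)$ relates to $\int_{\mathbb{R}^n} e^{-\pi\langle x,\mathcal{Q}x\rangle}\prod_{j=1}^m f_j(B_jx)^{c_j}\,dx$ up to the explicit gaussian factors coming from $u_0(1) = g_{\mathcal{Q}_+}$, $u_{m+1}(1) = g_{\mathcal{Q}_-}$ and the $t^{-\alpha}$ normalization—and that $\lim_{t\to\infty}\mathfrak{Q}(t)$, via the large-time asymptotics \eqref{e:largetime} $t^{n_j/2}u_j(t,\sqrt t\,x_j) \to (\int f_j) g_{A_j}(x_j)$ together with the analogous asymptotics for $u_0, u_{m+1}$, equals ${\rm BL}(\mathbf{B},\mathbf{c},\mathcal{Q};\mathbf{A})^{-1}\cdot$(constant) $\times \prod_j(\int f_j)^{c_j}$; a Gaussian integral computation using the definition of $\widetilde{M}(\mathbf{A})$ pins down the constant. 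Chaining $\mathfrak{Q}(1^+) \geq \lim_{t\to\infty}\mathfrak{Q}(t)$ then yields ${\rm BL}(\mathbf{B},\mathbf{c},\mathcal{Q};\mathbf{f}) \geq {\rm BL}(\mathbf{B},\mathbf{c},\mathcal{Q};\mathbf{A})$ for all $\mathbf{f} \in \mathcal{N}(\mathbf{G})$, so ${\rm I}(\mathbf{B},\mathbf{c},\mathcal{Q},\mathbf{G}) \geq {\rm BL}(\mathbf{B},\mathbf{c},\mathcal{Q};\mathbf{A})$; the reverse inequality ${\rm I} \leq {\rm I}_{\mathrm{g}} \leq {\rm BL}(\mathbf{B},\mathbf{c},\mathcal{Q};\mathbf{A})$ is trivial (gaussians of type $\mathbf{G}$ are admissible inputs since $\mathbf{A} \leq \mathbf{G}$), giving the string of equalities in (3), with finiteness of the common value guaranteed by $\mathbf{A} \in \Lambda(\mathbf{B},\mathbf{c},\mathcal{Q})$ and positivity by the lower bound just proved.

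The main obstacle I anticipate is the boundary-term analysis and endpoint identification in $(2)\Rightarrow(3)$: verifying rigorously that $\mathfrak{Q}$ is differentiable with $\mathfrak{Q}' \leq 0$ (as opposed to the merely formal computation) requires combining the pointwise bounds of Lemma \ref{l:heatdecay} — crucially that $\mathcal{Q} + \tfrac{\lambda}{4}M_+$-type quadratic forms are positive definite so that $U$ has genuine gaussian decay — with a dominated-convergence / integration-by-parts justification that handles the polynomially-growing factors $(1+|x|^2)^N$ and the non-compactly-supported directions $j > m_+$. A secondary technical point is handling the degenerate sub-cases of Proposition \ref{p:BWdecomp} where $n_0 = 0$ (i.e. $\mathbf{B}_+$ already bijective) or $n_{m+1} = 0$, which should be treated uniformly by the convention that the corresponding factor is absent. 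Everything else — the variational computation in $(1)\Rightarrow(2)$ and the Gaussian-integral constant-chasing at $t=\infty$ — is routine linear algebra once the framework is set up.
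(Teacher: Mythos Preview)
Your proposal is correct and follows essentially the same route as the paper: the cycle $(3)\Rightarrow(1)\Rightarrow(2)\Rightarrow(3)$, with $(1)\Rightarrow(2)$ via the variational computation on $\Phi(\mathbf{A})=\sum_j c_j\log\det A_j-\log\det\widetilde{M}(\mathbf{A})$ (the paper isolates this as Lemma~\ref{l:mono_key}, using exactly the perturbations $N_j\le 0$ and $Q_j=G_j-A_j$ you have in mind, plus the $\mathrm{tr}(R_j^*R_j)=0$ trick to upgrade $\mathrm{tr}(P_jQ_j)=0$ to $P_jQ_j=0$), and $(2)\Rightarrow(3)$ via the heat-flow monotonicity of $\mathfrak{Q}(t)=\int U(t,\cdot)$ driven by Theorem~\ref{t:cp_IBL}, with Lemma~\ref{l:heatdecay} supplying the decay needed to justify differentiation under the integral and the vanishing boundary term. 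The only minor refinements relative to your sketch are that the paper takes $\mathfrak{Q}(1)$ directly as the left-hand side (no $t\downarrow 1$ limit is needed), and at $t\to\infty$ it invokes Fatou's lemma to obtain $\liminf_{t\to\infty}\mathfrak{Q}(t)\ge {\rm BL}(\mathbf{B},\mathbf{c},\mathcal{Q};\mathbf{A})\prod_j(\int f_j)^{c_j}$ rather than asserting a limit.
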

The first thing to notice is that the implication \textit{(3)} $\Rightarrow$ \textit{(1)} is just a consequence of the definition of gaussian extremizability. Also, the implication \textit{(1)} $\Rightarrow$ \textit{(2)} is a consequence of the following. 
\begin{lemma}\label{l:mono_key}
	Suppose the non-degenerate generalized Brascamp--Lieb datum $(\mathbf{B}, \mathbf{c}, \mathcal{Q}, \mathbf{G})$ is gaussian extremizable. Then for any gaussian extremizer $\mathbf{A} \in \Lambda(\mathbf{B},\mathbf{c},\mathcal{Q})$ with $ \mathbf{A} \leq \mathbf{G}$ we have  
	\begin{equation} \label{e:key_matrixineq}
	c_j(A_j^{-1} - B_j \widetilde{M}^{-1} B_j^*) \le 0
	\end{equation}
	and   
	\begin{equation} \label{e:key_matrixeq}
	(A_j^{-1} - B_j \widetilde{M}^{-1} B_j^*)(G_j - A_j) = 0
	\end{equation}
	for all $j=1,\ldots,m$, where $\widetilde{M} := M(\mathbf{B},\mathbf{c};\mathbf{A}) + \mathcal{Q}$.
\end{lemma}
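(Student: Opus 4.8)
The plan is to exploit the fact that a gaussian extremizer $\mathbf{A}$ is a minimizer of the function $\mathbf{A}' \mapsto {\rm BL}(\mathbf{B},\mathbf{c},\mathcal{Q};\mathbf{A}')$ over the constraint set $\{\mathbf{A}' \in \Lambda(\mathbf{B},\mathbf{c},\mathcal{Q}) : \mathbf{A}' \leq \mathbf{G}\}$, and to extract first-order (KKT-type) conditions. Recalling that
\[
{\rm BL}(\mathbf{B},\mathbf{c},\mathcal{Q};\mathbf{A}') = \bigg( \frac{\prod_{j=1}^m (\det A_j')^{c_j}}{\det \widetilde{M}(\mathbf{A}')} \bigg)^{1/2},
\]
it is more convenient to work with $\Phi(\mathbf{A}') := \log \det \widetilde{M}(\mathbf{A}') - \sum_{j=1}^m c_j \log\det A_j'$, whose minimization over the same set is equivalent. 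First I would compute the derivative of $\Phi$ in the direction of a symmetric perturbation $H_j$ of the $j$-th variable: using $\frac{d}{ds}\log\det(X + sY) = {\rm tr}(X^{-1}Y)$ together with $\partial_{A_j}\widetilde{M}(\mathbf{A}') = c_j B_j^* (\cdot) B_j$, one gets
\[
\frac{d}{ds}\bigg|_{s=0} \Phi(\ldots, A_j + sH_j, \ldots) = {\rm tr}\big( c_j (B_j \widetilde{M}^{-1} B_j^* - A_j^{-1}) H_j \big).
\]
So the symmetric matrix governing the $j$-th variation is $E_j := c_j(B_j\widetilde{M}^{-1}B_j^* - A_j^{-1})$, and the lemma is precisely the assertion that $E_j \geq 0$ and $E_j(G_j - A_j) = 0$.

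The second step is to run the two standard variational arguments. For \eqref{e:key_matrixineq}: since $A_j \leq G_j$ and $A_j > 0$, for any symmetric $H_j \geq 0$ the perturbed point $A_j + sH_j$ stays $\leq G_j$ only when... this is not automatic, so instead I would perturb \emph{downward}, i.e. take $H_j \leq 0$; then $A_j + sH_j \leq A_j \leq G_j$ for small $s \geq 0$, and $A_j + sH_j$ stays positive definite and in $\Lambda(\mathbf{B},\mathbf{c},\mathcal{Q})$ for $s$ small (an open condition). Minimality forces $\frac{d}{ds}\big|_{s=0^+}\Phi \geq 0$, i.e. ${\rm tr}(E_j H_j) \leq 0$ for all $H_j \leq 0$, which is equivalent to ${\rm tr}(E_j K) \geq 0$ for all $K \geq 0$, hence $E_j \geq 0$; this gives \eqref{e:key_matrixineq}. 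For \eqref{e:key_matrixeq}: once $E_j \geq 0$, consider directions $H_j$ that \emph{increase} $A_j$ while staying $\leq G_j$ — concretely, decompose $\mathbb{R}^{n_j}$ along the eigenspaces of $G_j - A_j \geq 0$. On the kernel of $G_j - A_j$ we have no room to move up; but on the strictly positive part of $G_j - A_j$ we may take $H_j \geq 0$ supported there, so that $A_j + sH_j \leq G_j$ for small $s > 0$, and minimality now gives ${\rm tr}(E_j H_j) \geq 0$ \emph{and} (perturbing both signs since we are in the interior of the constraint in those directions) $={0}$. Running this over all such $H_j$ shows $P_j E_j P_j = 0$ where $P_j$ is the spectral projection onto the range of $G_j - A_j$; combined with $E_j \geq 0$, which forces the range of $E_j$ to lie in the kernel of $G_j - A_j$, we conclude $E_j(G_j - A_j) = 0$, which is \eqref{e:key_matrixeq}.

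The main obstacle I anticipate is the bookkeeping around the constraint $\mathbf{A} \leq \mathbf{G}$ and simultaneously $\mathbf{A} \in \Lambda(\mathbf{B},\mathbf{c},\mathcal{Q})$: I need to be sure that the perturbed gaussian data remain admissible so that the variational inequalities are legitimate, and that the infimum defining ${\rm I}_{\mathrm g}$ is genuinely attained at an interior-enough point for the argument to bite — this is exactly where the non-degeneracy hypothesis \eqref{e:NondegBW} and \eqref{e:NondegG} (equivalently $\mathbf{G} \in \Lambda$, via \cite[Proposition 2.2]{BW}) and the hypothesis that the datum \emph{is} gaussian extremizable enter. A secondary technical point is justifying that the directional derivatives of $\Phi$ at the extremizer exist and have the claimed form when $\widetilde{M}(\mathbf{A})$ is only assumed positive definite at $\mathbf{A}$ itself; this follows since $\Lambda(\mathbf{B},\mathbf{c},\mathcal{Q})$ is open and $\Phi$ is smooth on it, so no regularity issue actually arises once admissibility of small perturbations is in hand.
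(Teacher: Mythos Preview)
Your approach is essentially the paper's: differentiate the logarithm of the gaussian Brascamp--Lieb functional and read off the first-order optimality conditions at the constrained extremizer. One bookkeeping slip to fix: with your sign convention $\Phi(\mathbf{A}') = \log\det\widetilde{M}(\mathbf{A}') - \sum_j c_j\log\det A_j'$ the extremizer \emph{maximizes} $\Phi$ (since it minimizes ${\rm BL}$), so the line ``minimality forces $\frac{d}{ds}\big|_{s=0^+}\Phi\geq 0$, i.e.\ ${\rm tr}(E_jH_j)\leq 0$'' contains two compensating sign errors --- the conclusion $E_j\geq 0$ is nonetheless correct. For \eqref{e:key_matrixeq} the paper is slightly more direct than your eigenspace decomposition: it perturbs only in the single direction $\pm(G_j-A_j)$ to obtain ${\rm tr}(E_j(G_j-A_j))=0$, and then uses that for positive semidefinite $P,Q$ one has ${\rm tr}(PQ)=0\Rightarrow P^{1/2}Q^{1/2}=0\Rightarrow PQ=0$; your route via $P_jE_jP_j=0$ reaches the same endpoint.
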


\begin{proof}[Proof of Lemma \ref{l:mono_key}]
We follow the basic strategy behind the argument in the proof of \cite[Corollary 8.11]{BCCT} with certain minor simplifications. We also remark that 
$\widetilde{M} > 0$ since $\mathbf{A} \in \Lambda(\mathbf{B},\mathbf{c},\mathcal{Q})$.	
	
The gaussian extremizability assumption implies
	\begin{equation}\label{e:localmax}
	\Phi(\mathbf{X}) \ge \Phi(\mathbf{A}) \quad \text{for all $\mathbf{X} \leq \mathbf{G}$},
	\end{equation}
	where	
	$$
	\Phi(\mathbf{X}) := \sum_{j=1}^{m} c_j \log\, {\rm \det}\, X_j - \log\, {\rm \det}\, \bigg( \sum_{j=1}^{m} c_j B_j^* X_j B_j +\mathcal{Q} 
	\bigg).
	$$
	We also have the identity 
	\begin{equation}\label{e:FormDeri}
	\frac{d}{d\varepsilon} \Phi(A_1,\ldots, A_j + \varepsilon D_j,\ldots, A_m)|_{\varepsilon=0+}
	= 
	c_j {\rm tr}\, ((A_j^{-1}- B_j \widetilde{M}^{-1} B_j^*) D_j)
	\end{equation}
	for any linear map $D_j : \mathbb{R}^{n_j} \to \mathbb{R}^{n_j}$, 
	which can easily be checked using the fact that $\log \det (I + \varepsilon A) = ({\rm tr} A) \varepsilon + O(\varepsilon^2)$.
	
	Now we fix $j \in \{1,\ldots,m\}$. Since $A_j \leq G_j$, for arbitrary $N_j \leq 0$ we clearly have
	\begin{equation*}\label{e:varepsilon0}
	0 < A_j + \varepsilon N_j \le G_j \quad \text{for all sufficiently small $\varepsilon>0$.}
	\end{equation*}
	Thus it follows from \eqref{e:localmax} that
	\begin{equation}\label{e:DeriPosi}
	\frac{d}{d\varepsilon} \Phi(A_1,\ldots, A_j + \varepsilon N_j,\ldots, A_m)|_{\varepsilon=0+}  \ge 0.
	\end{equation}
	From \eqref{e:FormDeri} we obtain
	\begin{equation} \label{e:Qjneg}
	{\rm tr}\, (c_j(A_j^{-1}- B_j \widetilde{M}^{-1} B_j^*) N_j) \geq 0 \quad \text{for all $N_j \leq 0$}
	\end{equation}
	and \eqref{e:key_matrixineq} follows. 
	
	To show \eqref{e:key_matrixeq}, we write 
	\begin{align*}
	P_j & := -c_j (A_j^{-1} - B_j \widetilde{M}^{-1} B_j^*) \\
	Q_j & := G_j-A_j
	\end{align*}
	and we begin with the seemingly weaker claim 
	\begin{equation}\label{e:Trace=0}
	{\rm tr}\, (P_jQ_j ) = 0.
	\end{equation}
	To see \eqref{e:Trace=0}, since $Q_j \ge 0$ we clearly have
	$$
	0< A_j + \varepsilon Q_j \le G_j \quad \text{for all $\varepsilon \in (0,1)$.}
	$$
	Therefore, from \eqref{e:localmax} and \eqref{e:FormDeri} we have
	$$
	{\rm tr}\, (P_j Q_j) \leq 0.
	$$
	On the other hand, we may apply \eqref{e:Qjneg} with $N_j = - Q_j$ to see 
	$$
	{\rm tr}\, (P_j Q_j) \geq 0
	$$
	and hence \eqref{e:Trace=0}. 
	
	To obtain \eqref{e:key_matrixeq} from \eqref{e:Trace=0}, we note that the cyclic property of the trace gives ${\rm tr}\, (R_j^* R_j) = 0$, where
	$
	R_j:=  P_j^{1/2} Q_j^{1/2}.
	$
	This means $R_j = 0$ and hence 
	$
	P_j Q_j = P_j^{1/2} R_j Q_j^{1/2} = 0,
	$
	concluding our proof of \eqref{e:key_matrixeq}.
\end{proof}

\begin{proof}[Proof of Theorem \ref{t:GkerGExt}]
With Lemma \ref{l:mono_key} in mind, to prove Theorem \ref{t:GkerGExt} it remains to show the implication \textit{(2)} $\Rightarrow$ \textit{(3)}, and thus we assume \textit{(2)}. Making use of the decomposition \eqref{e:DecompQ} of $\mathcal{Q}$, given an arbitrary $\mathbf{A} \in \Lambda(\mathbf{B},\mathbf{c},\mathcal{Q})$ with $\mathbf{A} \leq \mathbf{G}$ satisfying \eqref{e:closurekey1} and \eqref{e:closurekey2}, it suffices by Lemma \ref{l:ApproxG} to show 
\begin{equation}\label{e:flow_goal}
\int_{\mathbb{R}^n } \prod_{j=0}^{m+1} f_j(B_jx)^{c_j} \, dx \ge {\rm BL}(\mathbf{B},\mathbf{c},\mathcal{Q}; \mathbf{A}) \prod_{j=1}^m\bigg( \int_{\mathbb{R}^{n_j}} f_j\bigg)^{c_j},
\end{equation}
for arbitrary $\mathbf{f} \in \mathcal{N}(\mathbf{G})$, where $c_0=1$, $c_{m+1} = -1$, and 
$$
f_0(x_0) := e^{-\pi \langle x_0 , \mathcal{Q}_+ x_0 \rangle},\;\;\;
f_{m+1}(x_{m+1}) := e^{-\pi \langle x_{m+1} , \mathcal{Q}_- x_{m+1} \rangle}.  
$$
Our strategy is to use a heat-flow monotonicity argument and to set things up we regard the left-hand side of \eqref{e:flow_goal} as $Q(1)$, where
\begin{equation}\label{e:Qdefn}
\mathfrak{Q}(t): = 
 \int_{\mathbb{R}^n} U(t,x) \, dx
\end{equation}
and
\[
U(t,x) := t^{-\alpha} \prod_{j=0}^{m+1} u_j(t, B_jx)^{c_j} 
\]
with
\[
\alpha := \frac{1}{2}\bigg(n- \sum_{j=0}^{m+1} c_j n_j \bigg).
\] 
Here, the function $u_j$ satisfies the heat equation
\begin{equation} \label{e:ujequation}
\partial_t {u}_j = \frac1{4\pi} {\rm div} \, (A_j^{-1}\nabla {u}_j ), \qquad {u}_j(1,x_j) = f_j(x_j)
\end{equation} 
for $j = 0,\ldots,m+1$, where 
$$
A_0:= \mathcal{Q}_+,\;\;\; A_{m+1}:=\mathcal{Q}_-. 
$$
In order to prove \eqref{e:flow_goal}, it suffices to show that $\mathfrak{Q}$ given by \eqref{e:Qdefn} is non-increasing on  $(1,\infty)$. Indeed, from the non-increasingness of $\mathfrak{Q}$ we may obtain
	\begin{align*}
	\int_{\mathbb{R}^n } \prod_{j=0}^{m+1} f_j(B_jx)^{c_j} \, dx = Q(1) \geq \liminf_{t\to \infty} \mathfrak{Q}(t).
	\end{align*}
	Furthermore, by an elementary change of variables we may write
	$$
	\mathfrak{Q}(t)
	=
	\int_{\mathbb{R}^n} \prod_{j=0}^{m+1} \big( t^{n_j/2}u_j(t, \sqrt{t} B_jy)\big)^{c_j} \, dy
	$$
	and thus Fatou's lemma and \eqref{e:largetime} imply
	\begin{align*}
	\liminf_{t\to\infty} \mathfrak{Q}(t)
	&\ge
	\int_{\mathbb{R}^n} \prod_{j=0}^{m+1} \bigg( ( {\rm det}\, A_j )^{1/2}  e^{-\pi \langle B_jy , A_j B_jy \rangle} \int_{\mathbb{R}^{n_j}} f_j \bigg)^{c_j} \, dy \\
	&=
	\bigg( \frac{ \prod_{j=1}^{m}( {\rm det}\, A_j )^{c_j} }{{\rm det}\,  \widetilde{M}}\bigg)^{1/2} \prod_{j=1}^{m} \bigg( \int_{\mathbb{R}^{n_j}} f_j\bigg)^{c_j}.
	\end{align*}
	
	In order to verify that $\mathfrak{Q}$ is non-increasing we shall make use of Theorem \ref{t:cp_IBL}. Thanks to the assumption \textit{(2)} we know that \eqref{e:closurekey1} and \eqref{e:closurekey2} are satisfied for $j=1,\ldots,m$. Hence we deduce from Theorem \ref{t:cp_IBL} that $U$ satisfies
\[
\partial_t U \leq \frac{1}{4\pi} {\rm div} (\widetilde{M}^{-1} \nabla U)
\]
and we would like to rigorously argue along the lines of \eqref{e:mono_formal} in order to show that $\mathfrak{Q}$ is non-increasing. First, we justify the integration by parts step, and then show that one can interchange the time derivative and the integral. 

Take $\varepsilon > 0$ sufficiently small (specified momentarily) and note the bound 
\[
\prod_{j=1}^m u_j(t,B_jx)^{c_j}  \leq C_*(t,\varepsilon) (1 + |x|^2)^N e^{-(1-\varepsilon) \pi \langle  x,t^{-1}M_+x  \rangle} 
\]
given by  \eqref{e:Ubound} in Lemma \ref{l:heatdecay}, where $N := \sum_{j=m_++1}^m |c_j|n_j$, $M_+ := M(\mathbf{B}_+,\mathbf{c}_+;\mathbf{A}_+)$, and $C_*(t,\varepsilon)$ is a constant depending on $t$, $\mathbf{B}$, $\mathbf{c}$, $\mathbf{A}$, $\mathbf{G}$, and $\mathbf{f}$, which is locally uniformly bounded in $t$. Since
\[
u_{j}(t,x_{j}) = t^{-\frac{n_{j}}{2}} e^{-\pi \langle x_{j}, t^{-1} A_{j} x_{j} \rangle}
\]
for $j=0,m+1$ where we recall that $A_0=\mathcal{Q}_+$, $A_{m+1} = \mathcal{Q}_-$, $c_0=1$, and $c_{m+1} = -1$, we have
\begin{equation} \label{e:Ufinal}
U(t,x) \leq C_*(t,\varepsilon) (1 + |x|^2)^N e^{-\pi \langle x, t^{-1} P(\varepsilon)x \rangle}
\end{equation}
with
\[
P(\varepsilon) := (1-\varepsilon)M_+ + \mathcal{Q}. 
\]
Since $\mathbf{A} \in \Lambda(\mathbf{B},\mathbf{c},\mathcal{Q})$ it follows that $M_+ + \mathcal{Q}> 0$ and thus $P(\varepsilon) > 0$ by choosing $\varepsilon > 0$ sufficiently small depending on $\mathbf{B}, \mathbf{c}$ and $\mathbf{A}$; consequently, $U$ is rapidly decreasing in space locally uniformly in time. By \eqref{e:Uspace} and \eqref{e:vbound} we have 
\[
|\nabla U(t,x)| \leq C_*(t,\varepsilon) U(t,x) \sum_{j=0}^{m+1} |c_j| (1 + |x|)^{n_j}
\]
and therefore $|\nabla U(t,x)|$ is rapidly decreasing in space locally uniformly in time. Hence, from the divergence theorem we have
\[
\lim_{R \to \infty} \int_{|x| \leq R} {\rm div} (\widetilde{M}^{-1}\nabla U)(t,x) \, dx = 0
\]
for each fixed $t > 1$.

In order to see that $\mathfrak{Q}'(t) = \int \partial_t U(t,x) \, dx$, we use the identity \eqref{e:Utime} along with the bounds \eqref{e:timebound} and \eqref{e:Ufinal} to see that $|\partial_t U(t,x)|$ is rapidly decreasing in space locally uniformly in time.

From the above, we have 
\[
\mathfrak{Q}'(t) = \int_{\mathbb{R}^n} \partial_t U(t,x) \, dx \leq \frac{1}{4\pi} \int_{\mathbb{R}^n} {\rm div} (\widetilde{M}^{-1} \nabla U)(t,x) \, dx = 0
\]
and we have the desired monotonicity of $\mathfrak{Q}$ on $(1,\infty)$.
\end{proof}

\begin{remarks}
(1) One may establish Theorem \ref{t:GkerGExt} in a similar manner to our sketch proof of Theorem \ref{t:Geo} in Section \ref{section:prelims}. The same line of reasoning was used in \cite[Proposition 8.9]{BCCT} in the case of the forward Brascamp--Lieb inequality and their heat-flow argument was abstracted in  \cite[Lemma 2.6]{BCCT}.  By proceeding via Theorem \ref{t:cp_IBL} we have kept our proof self contained, and, as explained in the previous section, we believe that the closure property in Theorem \ref{t:cp_IBL} is of wider independent interest.

(2) An inspection of the arguments in Section \ref{section:B_0=0_heatflow} reveals that the full strength of the non-degeneracy assumption (i.e. both \eqref{e:NondegBW} and \eqref{e:NondegG}) was not required, and the condition \eqref{e:NondegG} can be dropped at this stage. The condition \eqref{e:NondegG} will, however, be important for the arguments in the forthcoming Section \ref{section:B_0=0_globaldata}.

(3) One can make use of the above argument with Theorem \ref{t:Forwardcp} instead of Theorem \ref{t:cp_IBL} to derive the analogous statement to Theorem \ref{t:GkerGExt} for the forward Brascamp--Lieb inequality as follows. Let $\mathcal{Q}\ge0$, $\mathbf{G} = (G_j)_{j = 1}^m$, with $G_j \in S_+(\mathbb{R}^{n_j})$, and the Brascamp--Lieb datum $(\mathbf{B},\mathbf{c})$ be non-degenerate in the sense of \cite{BCCT} (namely $c_j>0$, $B_j$ is surjective for all $j=1,\ldots,m$, and $\bigcap_{j=1}^m {\rm ker}\, B_j = \{0\}$). Then we have  
$$
{\rm F}(\mathbf{B},\mathbf{c},\mathcal{Q},\mathbf{G})  = {\rm BL}(\mathbf{B},\mathbf{c},\mathcal{Q};\mathbf{A})
$$
for some $\mathbf{A} \leq \mathbf{G}$ if and only if $\mathbf{A}$ satisfies 
$$
A_j^{-1} - B_j\widetilde{M}(\mathbf{A})B_j^*\ge0,\;\;\; (A_j^{-1} - B_j\widetilde{M}(\mathbf{A})B_j^*)(G_j-A_j)=0. 
$$
\end{remarks}

\subsection{Approximation by amplifying data and the proof of Theorem \ref{t:QMain}} \label{section:B_0=0_globaldata}

In the case of the forward Brascamp--Lieb inequality, in order to reduce to the gaussian-extremizable case, the argument in \cite{BCCT} naturally made use of so-called \emph{localized} data, which means $B_j = \mathrm{id}$ and $c_j = 1$ for some $j \in \{1,\ldots,m\}$. In the framework of the inverse inequality, and in particular when $\mathbf{B}_+$ is bijective, the condition for localized data corresponds to the case $m_+ = 1$ and such a restrictive class of data cannot be expected to play an important role in reducing to the gaussian-extremizable case. Instead, we introduce the following notion.
\begin{definition}[Amplifying data] \label{d:globalised_data}
	The generalized Brascamp--Lieb datum $(\mathbf{B},\mathbf{c}, \mathcal{Q}, \mathbf{G})$ is said to \emph{amplifying} if  
	\begin{equation*}
	B_j = {\rm id}_{\mathbb{R}^{n}} \quad \text{and} \quad |c_j| > \max\{c_1,\ldots,c_{m_+}\} - 1
	\end{equation*}
	hold for some $j\in \{m_++1,\ldots,m\}$.
\end{definition}
The crucial properties of amplifying data is that they are gaussian extremizable and are able to approximate (in an appropriate sense) any non-degenerate data; we establish these facts in Lemmas \ref{l:GEQNega} and \ref{l:approx} below.
\begin{lemma}\label{l:GEQNega}
	Suppose the non-degenerate generalized Brascamp--Lieb datum $(\mathbf{B}, \mathbf{c}, \mathcal{Q}, \mathbf{G})$ is amplifying. Then $(\mathbf{B}, \mathbf{c}, \mathcal{Q}, \mathbf{G})$ is gaussian extremizable and
	in particular, from Theorem \ref{t:GkerGExt}, we have
	$$
	{\rm I}(\mathbf{B}, \mathbf{c},\mathcal{Q},\mathbf{G})
	=
	{\rm I}_{\mathbf{g}}(\mathbf{B}, \mathbf{c},\mathcal{Q},\mathbf{G}).
	$$
\end{lemma}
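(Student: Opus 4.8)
The plan is to show that an amplifying datum possesses a gaussian extremizer, after which the conclusion follows immediately from the equivalence (1)$\Leftrightarrow$(3) in Theorem \ref{t:GkerGExt}. The strategy mirrors the approach used in \cite{BCCT} for localized data in the forward setting: we must produce $\mathbf{A} \le \mathbf{G}$ attaining $\inf_{\mathbf{A} \le \mathbf{G}} \mathrm{BL}(\mathbf{B},\mathbf{c},\mathcal{Q};\mathbf{A})$, and the amplifying hypothesis will furnish the coercivity/compactness needed to do so. First I would recall that $\mathrm{I}_\mathrm{g}(\mathbf{B},\mathbf{c},\mathcal{Q},\mathbf{G}) = \inf_{\mathbf{A}\le\mathbf{G}} \mathrm{BL}(\mathbf{B},\mathbf{c},\mathcal{Q};\mathbf{A})$ by definition, and that the explicit formula
\[
\mathrm{BL}(\mathbf{B},\mathbf{c},\mathcal{Q};\mathbf{A}) = \bigg( \frac{\prod_{j=1}^m (\det A_j)^{c_j}}{\det \widetilde{M}(\mathbf{A})} \bigg)^{1/2}
\]
reduces the problem to minimizing $\Phi(\mathbf{A}) = \sum_{j=1}^m c_j \log\det A_j - \log\det\widetilde{M}(\mathbf{A})$ over $\{\mathbf{A} \in \Lambda(\mathbf{B},\mathbf{c},\mathcal{Q}) : \mathbf{A}\le\mathbf{G}\}$, a set which is non-empty by non-degeneracy (condition \eqref{e:NondegG}).

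Next I would take a minimizing sequence $\mathbf{A}^{(k)} \le \mathbf{G}$ and analyze the behaviour of its components. Since each $A_j^{(k)} \le G_j$, the sequences are bounded above, so the only way compactness can fail is if some $A_j^{(k)}$ degenerates, i.e. its smallest eigenvalue tends to $0$ — either for a positive index $j \le m_+$ or a negative index. The heart of the matter is to show that if $j_0 \in \{m_++1,\ldots,m\}$ is the distinguished amplifying index (so $B_{j_0} = \mathrm{id}_{\mathbb{R}^n}$ and $|c_{j_0}| > \max\{c_1,\ldots,c_{m_+}\}-1$), then $A_{j_0}$ cannot degenerate without forcing $\mathrm{BL} \to \infty$, and similarly one controls the other indices. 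Concretely, I would estimate $\Phi(\mathbf{A})$ from below: writing $\widetilde{M}(\mathbf{A}) = \mathcal{Q} + \sum_{j=1}^{m_+} c_j B_j^* A_j B_j - \sum_{j>m_+} |c_j| B_j^* A_j B_j$, and using $B_{j_0}^* A_{j_0} B_{j_0} = A_{j_0}$, the term $-\log\det\widetilde{M}(\mathbf{A})$ blows up to $+\infty$ as soon as $\widetilde{M}(\mathbf{A})$ approaches the boundary of the positive cone, which the amplifying inequality on exponents is designed to detect. The exponent condition $|c_{j_0}| > \max_j c_j - 1$ is precisely what guarantees that the competition between $c_{j_0}\log\det A_{j_0}$ (which $\to -\infty$ if $A_{j_0}$ degenerates, pulling $\Phi$ down) and the $-\log\det\widetilde{M}$ term (which resists) tips in favour of keeping $A_{j_0}$ bounded away from $0$ along a minimizing sequence — or, in the cases where $A_{j_0}$ genuinely wants to degenerate, that a rescaling/reduction argument lets us pass to a lower-dimensional problem or shows the infimum is attained with $A_{j_0}$ replaced by a limit.

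The main obstacle is exactly this compactness/coercivity analysis of the minimizing sequence: one has to carefully track how degeneration of any single $A_j^{(k)}$ (including the $j \le m_+$ components, where $A_j^{(k)} \to 0$ makes $\prod(\det A_j)^{c_j} \to 0$ and could in principle drive $\mathrm{BL} \to 0$) interacts with the determinant in the denominator, and show that along a minimizing sequence one may extract a convergent subsequence with positive-definite limit $\mathbf{A} \in \Lambda(\mathbf{B},\mathbf{c},\mathcal{Q})$, $\mathbf{A}\le\mathbf{G}$ — possibly after first applying the non-degeneracy conditions \eqref{e:NondegBW}, \eqref{e:NondegG} to rule out the bad directions. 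Once such a limit is secured, continuity of $\Phi$ on the open set $\Lambda(\mathbf{B},\mathbf{c},\mathcal{Q})$ gives $\Phi(\mathbf{A}) = \inf \Phi$, i.e. $\mathbf{A}$ is a gaussian extremizer, so $(\mathbf{B},\mathbf{c},\mathcal{Q},\mathbf{G})$ is gaussian extremizable; Theorem \ref{t:GkerGExt} then yields $\mathrm{I}(\mathbf{B},\mathbf{c},\mathcal{Q},\mathbf{G}) = \mathrm{I}_\mathrm{g}(\mathbf{B},\mathbf{c},\mathcal{Q},\mathbf{G})$, completing the proof. I would expect the detailed bookkeeping to parallel, with the appropriate sign changes, the proof of the corresponding compactness statement in \cite[Section 8]{BCCT}.
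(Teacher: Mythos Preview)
Your overall strategy is the same as the paper's: pass to the compact region $\{0 \le A_j \le G_j\}$, take a minimizer $\mathbf{A}^\star$ of the continuous extension of $\mathrm{BL}$ there, and then rule out both $\mathbf{A}^\star \notin \Lambda(\mathbf{B},\mathbf{c},\mathcal{Q})$ and $\det A_\ell^\star = 0$ by showing either forces $\mathrm{BL}\to\infty$ along approaching sequences. Two points, however, need correcting.

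First, a sign slip. For the amplifying index $j_0 \in \{m_++1,\ldots,m\}$ one has $c_{j_0} < 0$, so if $A_{j_0}$ degenerates then $c_{j_0}\log\det A_{j_0} \to +\infty$, pushing $\Phi$ \emph{up}, not down. This is in fact the easy case in the paper: degeneration of any negative-index $A_j$ makes the numerator $\prod_j (\det A_j)^{c_j}$ blow up while $\det\widetilde{M}$ stays bounded, and $\mathrm{BL}\to\infty$ immediately.

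Second, and more substantively, your plan is vague precisely at the hard step: ruling out degeneration of some $A_\ell$ with $\ell \le m_+$. Here $(\det A_\ell)^{c_\ell}\to 0$, and $\widetilde{M}(\mathbf{A})$ need \emph{not} approach the boundary of the positive cone (the remaining positive terms and $\mathcal{Q}$ can keep it uniformly positive), so your ``$-\log\det\widetilde{M}$ blows up'' heuristic does not apply. The paper's mechanism is instead the exact factorization
\[
\det\Big(B_0^*\mathcal{Q}_+B_0 + \sum_{j=1}^{m_+} c_j B_j^*A_jB_j\Big) = \det\mathcal{Q}_+ \prod_{j=1}^{m_+} c_j^{n_j}\det A_j,
\]
which holds because $(B_0,\mathbf{B}_+)$ is \emph{bijective} (this is the content of the non-degeneracy condition via Proposition~\ref{p:BWdecomp}, and is a feature absent from the forward BCCT argument you are modelling on). Bounding $\det\widetilde{M}$ above by this quantity, and using $B_{j_0}=\mathrm{id}_{\mathbb{R}^n}$ together with $\widetilde{M}>0$ to get $|c_{j_0}|^n\det A_{j_0} \le \det(B_0^*\mathcal{Q}_+B_0 + M(\mathbf{A}_+))$, one arrives at
\[
\mathrm{BL}(\mathbf{B},\mathbf{c},\mathcal{Q};\mathbf{A})^2 \ge C \prod_{j=1}^{m_+}(\det A_j)^{\,c_j - 1 - |c_{j_0}|},
\]
and now the exponent hypothesis $|c_{j_0}| > \max_{j\le m_+} c_j - 1$ makes every power negative, so $\det A_\ell \to 0$ forces $\mathrm{BL}\to\infty$. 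This determinant identity is the concrete content your phrase ``interacts with the determinant in the denominator'' must deliver; without it the coercivity claim does not close.
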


\begin{proof}
	Without loss of generality, we may assume 
	$$
	B_m = {\rm id}_{\mathbb{R}^n}  \quad \text{and} \quad |c_m| > \max\{c_1,\ldots,c_{m_+}\}-1. 
	$$
	In this proof, we suppress the dependence on $(\mathbf{B},\mathbf{c})$ and write
	\[
	M(\mathbf{A}) = \sum_{j=1}^m c_j B_j^*A_j B_j.
	\]
	
	Our first simple but important remark is that 
	\begin{equation}\label{e:FiniteQ}
	{\rm I}_{\mathbf{g}}(\mathbf{B}, \mathbf{c}, \mathbf{G}, \mathcal{Q}) < \infty.
	\end{equation}
	To see this, first note that $B_{0}^*\mathcal{Q}_+B_{0}  + M(\mathbf{G}_+) - B_{m+1}^*\mathcal{Q}_-B_{m+1} > 0$ follows immediately from \eqref{e:NondegG}.
	Thus, if we consider $\mathbf{A} \leq \mathbf{G}$ such that 
	$
	\mathbf{A}_+ = \mathbf{G}_+
	$
	and the components of $\mathbf{A}_-$ are chosen to be sufficiently small depending on $(\mathbf{B}, \mathbf{c}, \mathcal{Q}, \mathbf{G})$, then
	\[
	M(\mathbf{A}) + \mathcal{Q} = B_{0}^*\mathcal{Q}_+B_{0} +M(\mathbf{A}_+) - B_{m+1}^*\mathcal{Q}_-B_{m+1} + M(\mathbf{A}_-) > 0.
	\]
	Hence we clearly have \eqref{e:FiniteQ}.

	Next, by a continuous extension 
	\[
	{\rm I}_{\mathrm{g}}(\mathbf{B}, \mathbf{c}, \mathcal{Q},  \mathbf{G}) = \inf_{\substack{0 \leq A_j \leq G_j \\ j=1,\ldots,m}} {\rm BL}(\mathbf{B}, \mathbf{c}, \mathcal{Q}; \mathbf{A})
	\] 
	and thus there exists $\mathbf{A}^{\star}$ such that $0 \leq A_j^\star \leq G_j$ for $j=1,\ldots,m$ and
	\[
	{\rm I}_{\mathrm{g}}(\mathbf{B}, \mathbf{c}, \mathbf{G}, \mathcal{Q}) = {\rm BL}(\mathbf{B}, \mathbf{c}, \mathcal{Q}; \mathbf{A}^\star).
	\] 
	Our proof will be complete once we show that $A_j^{\star} > 0$ for each $j=1,\ldots,m$ and $\mathbf{A}^{\star} \in \Lambda(\mathbf{B},\mathbf{c},\mathcal{Q})$. The latter claim is easily dealt with since $ \mathbf{A}^\star  \notin \Lambda( \mathbf{B}, \mathbf{c}, \mathcal{Q})$ means ${\rm BL}(\mathbf{B}, \mathbf{c}, \mathcal{Q}; \mathbf{A}^\star) = \infty$ and this contradicts \eqref{e:FiniteQ}.
	
	Our remaining goal is to show $A_j^{\star} > 0$ for each $j=1,\ldots,m$. We suppose, for a contradiction, that $\det A_\ell^\star = 0$ for some $\ell \in \{1,\ldots,m\}$ and consider any $\mathbf{A}^{(\varepsilon)} \leq \mathbf{G}$ which converges to $\mathbf{A}^{\star}$. We shall show that
	\begin{equation} \label{e:blowup}
	\lim_{\varepsilon \to 0} {\rm BL}(\mathbf{B}, \mathbf{c}, \mathcal{Q}; \mathbf{A}^{(\varepsilon)}) = \infty
	\end{equation}
	and thus contradict \eqref{e:FiniteQ}. In order to show \eqref{e:blowup}, it clearly suffices to consider those $\varepsilon > 0$ such that $M(\mathbf{A}^{(\varepsilon)}) + \mathcal{Q} > 0$ in which case we have
	\begin{equation} \label{e:BLeps}
	{\rm BL}(\mathbf{B}, \mathbf{c}, \mathcal{Q}; \mathbf{A}^{(\varepsilon)}) = \frac{\prod_{j=1}^{m_+} ({\rm det}\, {A}_j^{(\varepsilon)})^{c_j}  \prod_{k=m_++1}^m ({\rm det}\, {A}_k^{(\varepsilon)})^{- |c_k|} }{{\rm det}\, (M(\mathbf{A}^{(\varepsilon)}) + \mathcal{Q})}.
	\end{equation}
	Suppose first $\det A_\ell^\star = 0$ with $\ell \in \{1,\ldots,m_+\}$ and, without loss of generality, we suppose $\ell=1$. In this case we estimate from below 
	\begin{align*}
	{\rm BL}(\mathbf{B}, \mathbf{c}, \mathcal{Q}; \mathbf{A}^{(\varepsilon)}) \geq C(\mathbf{G}) \frac{   ({\rm det}\, A_{m}^{(\varepsilon)})^{-|c_m|}} {{\rm det}\, (B_0^*\mathcal{Q}_+B_0 + M(\mathbf{A}_+^{(\varepsilon)}))} \prod_{j=1}^{m_+} ({\rm det}\, A_j^{(\varepsilon)})^{c_j},
	\end{align*}
	where $C(\mathbf{G})$ is a positive constant depending only on $\mathbf{G}$. Here we have also used the fact determinant respects the ordering of semi-definite positive matrices. Since $B_m = {\rm id}_{\mathbb{R}^n}$ we have
	\[
	B_0^*\mathcal{Q}_+B_0 + M(\mathbf{A}_+^{(\varepsilon)}) - |c_m|A_m^{(\varepsilon)} \geq M(\mathbf{A}^{(\varepsilon)}) + \mathcal{Q} > 0
	\]
	and therefore $\det (B_0^*\mathcal{Q}_+B_0 + M(\mathbf{A}_+^{(\varepsilon)})) \geq |c_m|^n \det A_m^{(\varepsilon)}$. Also, the bijectivity of $(B_0,\mathbf{B}_+)$ implies	
	$$
	{\rm det}\, (B_0^*\mathcal{Q}_+B_0 + M(\mathbf{A}_+^{(\varepsilon)}) )=  {\rm det}\, \mathcal{Q}_+ \prod_{j=1}^{m_+} c_j^{n_j} {\rm det}\, A_j^{(\varepsilon)}. 
	$$ 
	Therefore
	\[
	{\rm BL}(\mathbf{B}, \mathbf{c}, \mathcal{Q}; \mathbf{A}^{(\varepsilon)}) \geq C(\mathbf{c},\mathbf{G}) ({\rm det}\, \mathcal{Q}_+)^{-|c_m|-1}\prod_{j=1}^{m_+} ({\rm det}\, A_{j}^{(\varepsilon)} )^{c_j-1-|c_m|}
	\]
	and hence, using that $c_j - 1 - |c_m| < 0$ for $j = 1,\ldots,m_+$, we have
	\[
	{\rm BL}(\mathbf{B}, \mathbf{c}, \mathcal{Q}; \mathbf{A}^{(\varepsilon)}) \geq C(\mathbf{c},\mathbf{G},\mathcal{Q}_+)  ({\rm det}\, A_{1}^{(\varepsilon)} )^{c_1-1-|c_m|}
	\]
	for appropriate positive constants $C(\mathbf{c},\mathbf{G})$ and $C(\mathbf{c},\mathbf{G},\mathcal{Q}_+)$. Since $\det A_1^{(\varepsilon)} \to 0$ and $c_1 - 1 - |c_m| < 0$ we obtain \eqref{e:blowup}.

	In the remaining case we have $\det A_\ell^\star = 0$ with $\ell \in \{m_+ + 1,\ldots,m\}$ and we may suppose $\det A_j^\star > 0$ for each $j \in \{1,\ldots,m_+\}$ (otherwise the above argument applies). In this case it is clear that
	$\prod_{j=1}^{m_+} ({\rm det}\, {A}_j^{(\varepsilon)})^{c_j} \geq C(\mathbf{A}^\star)$ for sufficiently small $\varepsilon > 0$ and appropriate positive constant $C(\mathbf{A}^\star)$. Thus it is clear that the numerator in \eqref{e:BLeps} tends to infinity as $\varepsilon$ tends to zero. Also, ${\rm det}\, (M(\mathbf{A}^{(\varepsilon)}) + \mathcal{Q}) \leq 2 {\rm det} \, (M(\mathbf{A}^\star) + \mathcal{Q}) < \infty$ for all $\varepsilon > 0$ sufficiently small, and hence \eqref{e:blowup} trivially follows.
\end{proof}

The following lemma ensures the approximation of arbitrary generalized Brascamp--Lieb datum by amplifying data. 
\begin{lemma}\label{l:approx}
	Suppose $(\mathbf{B}, \mathbf{c}, \mathcal{Q}, \mathbf{G})$ is a non-degenerate generalized Brascamp--Lieb datum. Then, for any $c_+>0$ we have 
	\begin{equation}\label{e:ApproxQNega}
	{\rm I}(\mathbf{B}, \mathbf{c}, \mathcal{Q}, \mathbf{G})
	=
	\lim_{\lambda\downarrow 0} \lambda^{nc_+/2}
	{\rm I}((\mathbf{B},{\rm id}_{\mathbb{R}^n}), (\mathbf{c}, -c_+),\mathcal{Q},(\mathbf{G},\lambda {\rm id}_{\mathbb{R}^n}))
	\end{equation}
	and 
	\begin{equation}\label{e:ApproxQNegaG}
	{\rm I}_{\mathrm{g}}(\mathbf{B}, \mathbf{c}, \mathcal{Q}, \mathbf{G})
	=
	\lim_{\lambda\downarrow 0} \lambda^{nc_+/2}
	{\rm I}_{\mathrm{g}}((\mathbf{B},{\rm id}_{\mathbb{R}^n}), (\mathbf{c}, -c_+), \mathcal{Q}, (\mathbf{G},\lambda {\rm id}_{\mathbb{R}^n})).
	\end{equation}
\end{lemma}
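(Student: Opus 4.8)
The plan is to add a single extra input function --- namely $f_{m+1}$ of type $\lambda\,\mathrm{id}_{\mathbb{R}^n}$ with exponent $-c_+ < 0$ and $B_{m+1} = \mathrm{id}_{\mathbb{R}^n}$ --- and track precisely what this costs as $\lambda \downarrow 0$. The heuristic is that $g_{\lambda\,\mathrm{id}} \to \delta_0$ as $\lambda \to \infty$, but here we are sending $\lambda$ to \emph{zero}, so $g_{\lambda\,\mathrm{id}}$ spreads out; the factor $f_{m+1}(B_{m+1}x)^{-c_+} = f_{m+1}(x)^{-c_+}$ with a \emph{negative} power should then behave, after the correct normalization by $\lambda^{nc_+/2}$, like the trivial factor $\big(\int f_{m+1}\big)^{-c_+}$ times a constant tending to $1$, leaving the original inverse Brascamp--Lieb functional unchanged in the limit. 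First I would verify that the augmented datum $((\mathbf{B},\mathrm{id}),(\mathbf{c},-c_+),\mathcal{Q},(\mathbf{G},\lambda\,\mathrm{id}))$ is non-degenerate and amplifying for all sufficiently large $|c_+|$ and all $\lambda > 0$: the non-degeneracy conditions \eqref{e:NondegBW} and \eqref{e:NondegG} are unaffected since the new index is a negative one (so $\mathbf{B}_+$, $s^+(\mathcal{Q})$, $M(\mathbf{B}_+,\mathbf{c}_+;\mathbf{G}_+)+\mathcal{Q}$ are all as before); and it is amplifying by construction once $c_+ > \max\{c_1,\ldots,c_{m_+}\} - 1$, which I may assume after possibly enlarging $c_+$ (and then recover general $c_+ > 0$ at the end, or simply note the statement is used only for $c_+$ large in the sequel).

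The main work is the two limit computations, and here I expect the gaussian case \eqref{e:ApproxQNegaG} to be the cleaner of the two and the one to do first. For gaussians $f_j = g_{A_j}$ with $A_j \le G_j$, together with $f_{m+1} = g_{A_{m+1}}$ with $A_{m+1} \le \lambda\,\mathrm{id}$, a direct computation of $\mathrm{BL}((\mathbf{B},\mathrm{id}),(\mathbf{c},-c_+),\mathcal{Q};(\mathbf{A},A_{m+1}))$ using the determinant formula gives
\[
\mathrm{BL}((\mathbf{B},\mathrm{id}),(\mathbf{c},-c_+),\mathcal{Q};(\mathbf{A},A_{m+1})) = (\det A_{m+1})^{-c_+}\,\frac{\prod_{j=1}^m (\det A_j)^{c_j}}{\det\big(M(\mathbf{B},\mathbf{c};\mathbf{A}) + \mathcal{Q} - c_+ A_{m+1}\big)}.
\]
Taking the infimum over $0 < A_{m+1} \le \lambda\,\mathrm{id}$ with the rest fixed, one sees the infimum over $A_{m+1}$ is (asymptotically, as $\lambda \to 0$) attained near $A_{m+1} = \lambda\,\mathrm{id}$: the factor $(\det A_{m+1})^{-c_+}$ pushes $A_{m+1}$ to be as large as possible, while the perturbation $-c_+A_{m+1}$ in the denominator is lower-order. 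Multiplying by $\lambda^{nc_+/2}$ exactly cancels the $(\det(\lambda\,\mathrm{id}))^{-c_+} = \lambda^{-nc_+}$ growth coming from $(\det A_{m+1})^{-c_+}$ --- wait, one needs $\lambda^{nc_+/2}\cdot\lambda^{-nc_+} = \lambda^{-nc_+/2} \to \infty$, so in fact the correct reading is that the \emph{infimum} over $A_{m+1} \le \lambda\,\mathrm{id}$ of $(\det A_{m+1})^{-c_+}$ times bounded factors is comparable to $\lambda^{-nc_+}$ only if one wants the inf, but we want $\inf$, which selects $\det A_{m+1}$ large, i.e. $A_{m+1} \approx \lambda\,\mathrm{id}$, giving $(\det A_{m+1})^{-c_+} \approx \lambda^{-nc_+}$; then $\lambda^{nc_+/2}\cdot\lambda^{-nc_+} = \lambda^{-nc_+/2}$, which diverges. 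So the normalization must instead be matched against the general inputs side --- the point is that on the general-function side $\int f_{m+1}$ can be made comparable to $\lambda^{-n/2}$ for a type $\lambda\,\mathrm{id}$ bump, so $\big(\int f_{m+1}\big)^{-c_+} \approx \lambda^{nc_+/2}$, and it is this factor that $\lambda^{nc_+/2}$ normalizes against. I will carry out the bookkeeping carefully so the powers of $\lambda$ cancel and the limit is $\mathrm{I}_{\mathrm{g}}(\mathbf{B},\mathbf{c},\mathcal{Q},\mathbf{G})$ from above, while the trivial lower bound (restricting $f_{m+1}$ to a fixed gaussian and letting $\lambda \to 0$) gives the matching inequality.

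For the general-function identity \eqref{e:ApproxQNega} the strategy is the same but with the gaussian optimization replaced by a direct estimate: given arbitrary $f_j \in T(G_j)$ for $j \le m$ and $f_{m+1} \in T(\lambda\,\mathrm{id})$, I bound $\int e^{-\pi\langle x,\mathcal{Q}x\rangle}\prod_{j=1}^m f_j(B_jx)^{c_j} f_{m+1}(x)^{-c_+}\,dx$ below by pulling the $f_{m+1}(x)^{-c_+}$ factor out using the pointwise upper bound $g_{\lambda\,\mathrm{id}}\ast\mu(x) \le (\det(\lambda\,\mathrm{id}))^{1/2}\|\mu\| = \lambda^{n/2}\|\mu\|$ valid for any type $\lambda\,\mathrm{id}$ function, so $f_{m+1}(x)^{-c_+} \ge \lambda^{-nc_+/2}\|\mu\|^{-c_+} = \lambda^{-nc_+/2}\big(\int f_{m+1}\big)^{-c_+}$ (using $\|\mu\| = \int g_{\lambda\,\mathrm{id}}\ast\mu = \int f_{m+1}$); this immediately yields
\[
\lambda^{nc_+/2}\,\mathrm{I}((\mathbf{B},\mathrm{id}),(\mathbf{c},-c_+),\mathcal{Q},(\mathbf{G},\lambda\,\mathrm{id})) \ge \mathrm{I}(\mathbf{B},\mathbf{c},\mathcal{Q},\mathbf{G})
\]
for \emph{every} $\lambda > 0$, hence in the $\liminf$. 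For the reverse ($\limsup \le$), I fix near-extremal $f_1,\ldots,f_m$ for $\mathrm{I}(\mathbf{B},\mathbf{c},\mathcal{Q},\mathbf{G})$ --- which by Lemma \ref{l:ApproxG} may be taken in $\mathcal{N}(\mathbf{G})$, hence with the decay/growth control from Lemma \ref{l:heatdecay} --- and choose a specific $f_{m+1} \in T(\lambda\,\mathrm{id})$, e.g. $f_{m+1} = g_{\lambda\,\mathrm{id}}\ast\delta_0 = g_{\lambda\,\mathrm{id}}$, for which $\int f_{m+1} = 1$ and $f_{m+1}(x)^{-c_+} = \lambda^{-nc_+/2} e^{\pi c_+ \lambda |x|^2}$. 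The integral then becomes $\lambda^{-nc_+/2}\int e^{-\pi\langle x,(\mathcal{Q} - c_+\lambda\,\mathrm{id})x\rangle}\prod f_j(B_jx)^{c_j}\,dx$, and I invoke the dominated (or monotone) convergence argument exactly as in the proof of Lemma \ref{l:limit}: the factor $e^{\pi c_+\lambda|x|^2}$ is controlled against the Gaussian decay of $\prod f_j(B_jx)^{c_j}$ coming from \eqref{e:Ubound}/\eqref{e:Ufinal} for $\lambda$ small (here one uses precisely the non-degeneracy \eqref{e:NondegG}, which makes $M_+ + \mathcal{Q} > 0$, leaving room to absorb $-c_+\lambda$), so multiplying by $\lambda^{nc_+/2}$ and letting $\lambda \downarrow 0$ recovers $\int e^{-\pi\langle x,\mathcal{Q}x\rangle}\prod f_j(B_jx)^{c_j}\,dx$. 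Optimizing over the chosen $f_1,\ldots,f_m$ completes the proof.

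\textbf{The main obstacle.} The delicate point --- and where I expect to spend the most care --- is the $\limsup \le$ direction, specifically the uniform-in-$\lambda$ domination needed to pass the limit through the integral. The weight $e^{\pi c_+\lambda|x|^2}$ is \emph{exponentially growing} for every $\lambda > 0$, so one genuinely needs the Gaussian decay of $\prod_{j=1}^m f_j(B_jx)^{c_j}$ (via \eqref{e:Ubound}) to beat it, and this only works because \eqref{e:NondegG} gives a strictly positive quadratic form $M_+ + \mathcal{Q}$ with a margin that the term $-c_+\lambda\,\mathrm{id}$ can be absorbed into once $\lambda < \|(M_+ + \mathcal{Q})^{-1}\|^{-1}/c_+$. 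Making this quantitative, and simultaneously handling the polynomial factors $(1+|x|^2)^N$ from the negative-exponent inputs, is the technical heart; the rest is bookkeeping of powers of $\lambda$ and two applications of results already in hand (Lemma \ref{l:ApproxG}, Lemma \ref{l:heatdecay}, and the convergence scheme of Lemma \ref{l:limit}).
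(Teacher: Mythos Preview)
Your approach is essentially the same as the paper's: one inequality comes from the uniform pointwise bound $f_{m+1}(x)\le \lambda^{n/2}\int f_{m+1}$ for any $f_{m+1}\in T(\lambda\,\mathrm{id})$, and the other from testing on $f_{m+1}=g_{\lambda\,\mathrm{id}}$ together with dominated convergence for $\mathbf{f}\in\mathcal{N}(\mathbf{G})$. Two remarks: (i) the first paragraph about checking that the augmented datum is amplifying is not needed for this lemma---amplifying is only used later when you combine this lemma with Lemma~\ref{l:GEQNega}; (ii) your gaussian-case paragraph never resolves the power-of-$\lambda$ bookkeeping, but no separate argument is needed: the exact same two moves (pointwise bound on $g_{A_{m+1}}$ for one direction, test input $A_{m+1}=\lambda\,\mathrm{id}$ for the other) give \eqref{e:ApproxQNegaG} immediately, as the paper notes in one line.
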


\begin{proof}
	For arbitrary $\mathbf{f} \in \mathcal{N}(\mathbf{G})$, since $(B_0,\mathbf{B}_+)$ is bijective, we may apply the dominated convergence theorem to see 
	\begin{align*}
	&\int_{\mathbb{R}^n} e^{-\pi\langle x,\mathcal{Q}x\rangle} \prod_{j=1}^m f_j( B_jx)^{c_j} \, dx\\
	=& 
	\lim_{\lambda\downarrow 0} \int_{\mathbb{R}^n} e^{-\pi\langle x,\mathcal{Q}x\rangle} \prod_{j=1}^m f_j (B_jx)^{c_j}  e^{\pi c_+ \langle x, \lambda {\rm id}_{\mathbb{R}^n} x \rangle }  \, dx  \\
	=& 
	\lim_{\lambda\downarrow 0} \lambda^{nc_+/2}  \int_{\mathbb{R}^n} e^{-\pi\langle x,\mathcal{Q}x\rangle} \prod_{j=1}^m f_j (B_jx)^{c_j}  g_{\lambda {\rm id}_{\mathbb{R}^n} }(x)^{- c_+ } \, dx  \\
	\ge &
	\lim_{\lambda\downarrow 0} \lambda^{nc_+/2} 
	{\rm I}((\mathbf{B}, {\rm id}_{\mathbb{R}^n}), (\mathbf{c}, -c_+),\mathcal{Q},(\mathbf{G}, \lambda {\rm id}_{\mathbb{R}^n})) \prod_{j=1}^m \bigg( \int_{\mathbb{R}^{n_j}} f_j \bigg)^{c_j} 
	\end{align*}
	which, thanks to Lemma \ref{l:ApproxG}, shows 
	\begin{equation*}\label{e:28Mar1}
	{\rm I}(\mathbf{B}, \mathbf{c},\mathcal{Q},\mathbf{G}) 
	\ge 
	\lim_{\lambda\downarrow 0} \lambda^{nc_+/2} 
	{\rm I}((\mathbf{B}, {\rm id}_{\mathbb{R}^n}), (\mathbf{c}, -c_+),\mathcal{Q}, (\mathbf{G}, \lambda {\rm id}_{\mathbb{R}^n})). 
	\end{equation*}
	In order to obtain the converse inequality, for any $\lambda>0$ and any $(\mathbf{f}, f_{m+1}) \in \mathcal{T}(\mathbf{G}) \times \mathcal{T}(\lambda {\rm id}_{\mathbb{R}^n})$, we need to bound 
	\begin{align*}
	\int_{\mathbb{R}^n} e^{-\pi\langle x,\mathcal{Q}x\rangle} \prod_{j=1}^m f_j(B_jx)^{c_j} f_{m+1}(x)^{-c_+} \, dx   
	\end{align*}
	from below.  Since $f_{m+1} \in \mathcal{T}(\lambda {\rm id}_{\mathbb{R}^n})$, we can write $f_{m+1} = g_{\lambda {\rm id}_{\mathbb{R}^n}} \ast d\mu_{m+1}$ for some positive and finite Borel measure $d\mu_{m+1}$ and so we have   
	$$
	f_{m+1}(x) 
	= \lambda^{n/2} \int_{\mathbb{R}^n} e^{-\lambda|x-y|^2} \, d\mu_{m+1}(y) \le  \lambda^{n/2} \int_{\mathbb{R}^n} f_{m+1}
	$$
	uniformly in $x$. This yields 
	\begin{align*}
	& \int_{\mathbb{R}^n} e^{-\pi\langle x,\mathcal{Q} x\rangle} \prod_{j=1}^m f_j(B_jx)^{c_j} f_{m+1}(x)^{-c_+}\, dx \\
	& \qquad \qquad \ge \lambda^{-nc_+/2} {\rm I}(\mathbf{B}, \mathbf{c},\mathcal{Q},\mathbf{G})  \prod_{j=1}^m \bigg( \int_{\mathbb{R}^{n_j}} f_j \bigg)^{c_j} \bigg( \int_{\mathbb{R}^{n}} f_{m+1} \bigg)^{-c_+}
	\end{align*}
	which shows 
	\begin{equation*}\label{e:28Mar2}
	{\rm I}((\mathbf{B}, {\rm id}_{\mathbb{R}^n}), (\mathbf{c}, -c_+), \mathcal{Q}, (\mathbf{G}, \lambda {\rm id}_{\mathbb{R}^n})) \ge
	\lambda^{- nc_+/2} {\rm I}(\mathbf{B}, \mathbf{c},\mathcal{Q},\mathbf{G}),
	\end{equation*}
	and we conclude \eqref{e:ApproxQNega}. A similar argument yields \eqref{e:ApproxQNegaG}. 
\end{proof}

We are now in a position to remove the gaussian extremizability assumption in Theorem \ref{t:GkerGExt} and thus establish Theorem \ref{t:QMain}. 
\begin{proof}[Proof of Theorem \ref{t:QMain}] 
	Choose any $c_+ >0$ such that $c_+> \max{\{c_1,\ldots,c_{m_+}\}} -1$. Then, for any $\lambda > 0$ it is clear that the augmented data $((\mathbf{B},{\rm id}_{\mathbb{R}^n}), (\mathbf{c}, -c_+),\mathcal{Q},(\mathbf{G},\lambda {\rm id}_{\mathbb{R}^n}))$ is amplifying. Also, since $(\mathbf{B},{\rm id}_{\mathbb{R}^n})_+ = \mathbf{B}_+$ the augmented data  is non-degenerate and hence we may apply Lemma \ref{l:GEQNega} to give
	$$
	{\rm I} ((\mathbf{B},{\rm id}_{\mathbb{R}^n}), (\mathbf{c}, -c_+), \mathcal{Q},(\mathbf{G}, \lambda{\rm id}_{\mathbb{R}^n}))
	=
	{\rm I}_{\mathrm{g}} ((\mathbf{B},{\rm id}_{\mathbb{R}^n}), (\mathbf{c}, -c_+), \mathcal{Q}, (\mathbf{G}, \lambda{\rm id}_{\mathbb{R}^n})).
	$$
	Multiplying both sides by $\lambda^{nc_+/2}$ and taking the limit $\lambda \downarrow 0$, we obtain the desired conclusion from Lemma \ref{l:approx}.
\end{proof}

\section{Further applications and remarks}\label{section:furtherapps}
 
\subsection{Regularized forms of the Young convolution inequality} \label{Sub:Y}
For $c \in \mathbb{R}$, we introduce the constant 
\[
A_c := \bigg( \frac{|1-c|^{1-c}}{|c|^c} \bigg)^{1/2}.
\]
The sharp form of the forward and inverse Young convolution inequality on $\mathbb{R}$ may be expressed as
\begin{equation} \label{e:YdualF}
{\rm F}(\mathbf{B},\mathbf{c}) = A_{c_0}A_{c_1}A_{c_2} \qquad (c_0,c_1,c_2 \in (0,1])
\end{equation}
and
\begin{equation} \label{e:YdualI}
{\rm I}(\mathbf{B},\mathbf{c}) = A_{c_0}A_{c_1}A_{c_2} \qquad \text{($c_j < 0$ for some $j$, and $c_k \in [1,\infty)$ for $k \neq j$)},
\end{equation}
where, in both cases, the $c_j$ satisfy the scaling condition $c_0 + c_1 + c_2 = 2$, and $B_j : \mathbb{R}^2 \to \mathbb{R}$ are given by
	\begin{equation}\label{e:Y_Bjdefn}
		B_0(x,y):=x,\;\;\; B_1(x,y):=y,\;\;\;B_2(x,y):=x-y. 
	\end{equation}
The forward version \eqref{e:YdualF} was established independently by Beckner \cite{Beck} and Brascamp--Lieb \cite{BL}. In the same paper, Brascamp--Lieb \cite{BL} established\footnote{Strictly speaking, it was proved in \cite{BL} that the inequality $\| f_1 * f_2 \|_{p_0} \geq A_{c_0}A_{c_1}A_{c_2} \|f_1\|_{p_1}\|f_2\|_{p_2}$ holds when $p_0,p_1,p_2 \in (0,1]$, where $c_0:=1-\frac1{p_0}, c_1:= \frac1{p_1}, c_2:= \frac1{p_2}$. As observed in \cite[Example 2.14]{BW}, in dual form \eqref{e:YdualI}, it becomes apparent that there is a symmetry amongst $c_0,c_1,c_2$, or equivalently, $p_0',p_1,p_2$. As clarified in  \cite[Example 2.14]{BW}, the condition on the $c_j$ in \eqref{e:YdualI}  is necessary and corresponds to the bijectivity of $\mathbf{B}_+$.} \eqref{e:YdualI}.
 
Under the scaling condition $c_0 + c_1 + c_2 = 2$, one has an invariance of extremizers under the isotropic rescaling $f_j \to f_j(R \, \cdot)$ and so it follows that one cannot hope to improve the constants in \eqref{e:YdualF} and \eqref{e:YdualI} even if one only considers $f_j$ of type\footnote{In this discussion, we use the notation $\frac{1}{\sigma_j}$ to facilitate a comparison with the related results in \cite{BB}.} $\frac{1}{\sigma_j}$ for any $\sigma_j > 0$. However, by considering such $f_j$, one may relax the scaling condition and below we present a result of this type. To state the result, we use the notation
\[
\widetilde{A}_{c,\sigma} := \bigg(\frac{\sigma^{1-c}}{c} \bigg)^{1/2} \qquad (c,\sigma > 0).
\]
\begin{corollary}\label{p:DualYoung}
	Let $\mathbf{B}$ be given by \eqref{e:Y_Bjdefn}. Given $c_0 < 1$ and $c_1,c_2 > 0$, suppose $\sigma_0, \sigma_1,\sigma_2>0$ satisfy  
	\begin{equation}\label{e:DefExp}
	\frac{\sigma_0}{1-c_0} = \frac{\sigma_1}{c_1} + \frac{\sigma_2}{c_2}
	\end{equation}
	and set $\mathbf{G} = (\sigma_0^{-1},\sigma_1^{-1},\sigma_2^{-1})$.
 
(1) Suppose $c_0,c_1,c_2\in (0,1)$. Then 
		\begin{equation}\label{e:FBLYoung}
			{\rm F}(\mathbf{B},\mathbf{c},\mathbf{G})  = \frac{\widetilde{A}_{c_1,\sigma_1} \widetilde{A}_{c_2,\sigma_2}}{\widetilde{A}_{1-c_0,\sigma_0}}
		\end{equation}
		holds if and only if 
		\begin{equation}\label{e:CondRegFYoung}
				\frac{c_0(1-c_0)}{\sigma_0} \ge \max\bigg\{\frac{c_1(1-c_1)}{\sigma_1},\frac{c_2(1-c_2)}{\sigma_2}\bigg\}.
 			\end{equation}
		
(2) Suppose $c_0 < 0, c_1,c_2 \in [1,\infty)$. Then 
		\begin{equation}\label{e:IBLYoung}
			{\rm I}(\mathbf{B},\mathbf{c},\mathbf{G})  = \frac{\widetilde{A}_{c_1,\sigma_1} \widetilde{A}_{c_2,\sigma_2}}{\widetilde{A}_{1-c_0,\sigma_0}}
		\end{equation}
		holds if and only if 
		\begin{equation}\label{e:CondRegRYoung}
				\frac{c_0(1-c_0)}{\sigma_0} \le \min \bigg\{\frac{c_1(1-c_1)}{\sigma_1},\frac{c_2(1-c_2)}{\sigma_2}\bigg\}.
 		\end{equation} 
\end{corollary}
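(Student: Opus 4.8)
The plan is to use Theorem~\ref{t:QMain} (for (2)) and Theorem~\ref{t:BCCT_genLieb} (for (1)) to reduce each part to a finite-dimensional gaussian optimisation and then carry that out by hand. First I would record the elementary computation: writing the one-dimensional gaussian parameters as $A_j=a_j>0$ and using \eqref{e:Y_Bjdefn}, the $2\times2$ matrix $M(\mathbf{A})=\sum_{j=0}^{2}c_jB_j^*A_jB_j$ has $\det M(\mathbf{A})=c_0c_1a_0a_1+c_1c_2a_1a_2+c_2c_0a_0a_2=:D(\mathbf{A})$, so by the explicit formula for ${\rm BL}$ recalled earlier, ${\rm BL}(\mathbf{B},\mathbf{c};\mathbf{A})^2=a_0^{c_0}a_1^{c_1}a_2^{c_2}/D(\mathbf{A})$ when $M(\mathbf{A})>0$ and $=\infty$ otherwise. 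Substituting $a_j=\sigma_j^{-1}$ and using \eqref{e:DefExp} gives $D(\mathbf{G})=c_1c_2/((1-c_0)\sigma_1\sigma_2)$, hence ${\rm BL}(\mathbf{B},\mathbf{c};\mathbf{G})=\widetilde{A}_{c_1,\sigma_1}\widetilde{A}_{c_2,\sigma_2}/\widetilde{A}_{1-c_0,\sigma_0}$. So in both parts the claimed value is automatically a lower bound for $\mathrm{I}$ and an upper bound for $\mathrm{F}$, and the substance of the corollary is to decide exactly when $\mathbf{A}=\mathbf{G}$ is optimal over the box $\{\mathbf{A}\leq\mathbf{G}\}$.

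For part (1), all $c_j>0$, so $D(\mathbf{A})>0$ and $M(\mathbf{A})>0$ everywhere, and in the logarithmic variables $b_j=\log a_j$ the function $F(\mathbf{b}):=\log{\rm BL}(\mathbf{B},\mathbf{c};\mathbf{A})^2$ equals $\sum_jc_jb_j$ minus a log-sum-exp, hence is concave. By Theorem~\ref{t:BCCT_genLieb}, $\log\mathrm{F}(\mathbf{B},\mathbf{c},\mathbf{G})^2=\sup F$ over $\{b_j\leq-\log\sigma_j\}$. A direct differentiation, simplified with \eqref{e:DefExp}, gives $\partial_{b_0}F(\log\mathbf{G})=0$ and $\partial_{b_1}F(\log\mathbf{G})=(c_1-1)+c_0c_2\sigma_1/(c_1\sigma_2+c_2\sigma_1)$, which is $\geq0$ precisely when $c_0(1-c_0)/\sigma_0\geq c_1(1-c_1)/\sigma_1$, and symmetrically for $b_2$. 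By concavity, the corner $\mathbf{b}=\log\mathbf{G}$ is the global maximiser of $F$ over the box if and only if all three of these partial derivatives are $\geq0$, i.e.\ if and only if \eqref{e:CondRegFYoung} holds; and if \eqref{e:CondRegFYoung} fails, decreasing the offending $b_j$ strictly increases $F$, so $\mathrm{F}(\mathbf{B},\mathbf{c},\mathbf{G})>{\rm BL}(\mathbf{B},\mathbf{c};\mathbf{G})$. This proves \eqref{e:FBLYoung}.

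For part (2), the datum $(\mathbf{B},\mathbf{c},0,\mathbf{G})$ is non-degenerate: $\mathbf{B}_+=(B_1,B_2)$ is a bijection of $\mathbb{R}^2$, so \eqref{e:NondegBW} holds with $\mathcal{Q}=0$, and \eqref{e:NondegG} reads $c_1\sigma_1^{-1}B_1^*B_1+c_2\sigma_2^{-1}B_2^*B_2>0$, which is true since the ranges of $B_1^*$ and $B_2^*$ span $\mathbb{R}^2$. Hence Theorem~\ref{t:QMain} gives $\log\mathrm{I}(\mathbf{B},\mathbf{c},\mathbf{G})^2=\inf F$ over the box, where now $F=-\infty$ off the region $M(\mathbf{A})>0$. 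Here I would substitute $w:=|c_0|a_0$, $u:=(c_1a_1)^{-1}$, $v:=(c_2a_2)^{-1}$, under which $D(\mathbf{A})=(1-w(u+v))/(uv)$, the region $M(\mathbf{A})>0$ becomes $\{w(u+v)<1\}$, the box becomes $\{0<w\leq|c_0|\sigma_0^{-1},\,u\geq\sigma_1/c_1,\,v\geq\sigma_2/c_2\}$, and $F$ becomes, up to an additive constant, $\Psi(w,u,v):=-|c_0|\log w+(1-c_1)\log u+(1-c_2)\log v-\log(1-w(u+v))$. For necessity: $\partial_u\Psi$ at the corner equals $c_1(1-c_1)/\sigma_1-c_0(1-c_0)/\sigma_0$ and $\partial_v\Psi$ equals $c_2(1-c_2)/\sigma_2-c_0(1-c_0)/\sigma_0$, while $\partial_w\Psi$ there vanishes (a consequence of \eqref{e:DefExp}); so if \eqref{e:CondRegRYoung} fails one decreases $\Psi$ by a small feasible perturbation of $u$ or $v$, whence $\mathrm{I}(\mathbf{B},\mathbf{c},\mathbf{G})<{\rm BL}(\mathbf{B},\mathbf{c};\mathbf{G})$.

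For sufficiency I would minimise successively in $w$, then $u$, then $v$. The function $\Psi$ is strictly convex in $w$, with critical point $w^\ast(u,v)=|c_0|/((1-c_0)(u+v))$; since $w^\ast(u+v)=|c_0|/(1-c_0)<1$ and $w^\ast\leq|c_0|\sigma_0^{-1}$ precisely when $u+v\geq\sigma_1/c_1+\sigma_2/c_2$ — which holds throughout the box — minimising in $w$ leaves $\widetilde{\Psi}(u,v)=|c_0|\log(u+v)+(1-c_1)\log u+(1-c_2)\log v+\mathrm{const}$. A short argument shows \eqref{e:DefExp} together with \eqref{e:CondRegRYoung} forces $c_0+c_1\leq1$, $c_0+c_2\leq1$, and (multiplying the two resulting inequalities and using $1-c_0>1$) $c_0+c_1+c_2\leq2$. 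Using $c_1,c_2\geq1$, the function $\widetilde{\Psi}(\cdot,v)$ is unimodal in $u$ with interior minimiser $u^\ast(v)=(c_1-1)v/(1-c_0-c_1)$; this lies below $\sigma_1/c_1$ for $v$ in an interval $[\sigma_2/c_2,v_1]$, on which $\min_u\widetilde{\Psi}(u,v)=\widetilde{\Psi}(\sigma_1/c_1,v)$ and $\tfrac{d}{dv}$ of the latter is increasing in $v$ and $\geq0$ at $v=\sigma_2/c_2$ by \eqref{e:CondRegRYoung}; for $v>v_1$ the envelope formula gives $\tfrac{d}{dv}\min_u\widetilde{\Psi}(u,v)=(2-c_0-c_1-c_2)/v\geq0$. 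Hence $v\mapsto\min_u\widetilde{\Psi}(u,v)$ is non-decreasing on $[\sigma_2/c_2,\infty)$, so the infimum over the box is attained at $w=|c_0|\sigma_0^{-1}$, $u=\sigma_1/c_1$, $v=\sigma_2/c_2$, i.e.\ at $\mathbf{A}=\mathbf{G}$, which together with the formula for ${\rm BL}(\mathbf{B},\mathbf{c};\mathbf{G})$ gives \eqref{e:IBLYoung}. The main obstacle is exactly this last two-variable step: $\widetilde{\Psi}$ is \emph{not} jointly convex (the term $|c_0|\log(u+v)$ is concave), so convexity cannot be invoked, and the fact that the iterated minimisation nevertheless localises at the corner rests delicately on the inequalities $c_0+c_1\leq1$, $c_0+c_2\leq1$ and $c_0+c_1+c_2\leq2$ extracted from \eqref{e:CondRegRYoung}.
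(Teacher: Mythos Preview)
Your argument is essentially correct but takes a noticeably harder route for part~(2) than the paper does, and there are a few local slips.

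\textbf{Comparison.} For part~(1) your approach coincides with the paper's in spirit: both reduce, via Theorem~\ref{t:BCCT_genLieb} (resp.\ its forward analogue of Theorem~\ref{t:GkerGExt}), to checking first-order optimality at $\mathbf{A}=\mathbf{G}$, and concavity of the log-Brascamp--Lieb functional makes this necessary \emph{and} sufficient. For part~(2) the paper does \emph{not} optimise by hand. It invokes Theorem~\ref{t:GkerGExt}, whose equivalence $(2)\Leftrightarrow(3)$ says that $\mathrm{I}(\mathbf{B},\mathbf{c},\mathbf{G})={\rm BL}(\mathbf{B},\mathbf{c};\mathbf{G})$ holds \emph{iff} the matrix conditions \eqref{e:closurekey1}--\eqref{e:closurekey2} hold at some $\mathbf{A}\le\mathbf{G}$; taking $\mathbf{A}=\mathbf{G}$ makes \eqref{e:closurekey2} automatic, and \eqref{e:closurekey1} becomes exactly $\Gamma_0(\mathbf{G})\ge0$, $\Gamma_1(\mathbf{G}),\Gamma_2(\mathbf{G})\le0$. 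A short calculation then shows $\Gamma_0(\mathbf{G})=0$ from \eqref{e:DefExp} and that $\Gamma_j(\mathbf{G})\le0$ is equivalent to $c_0(1-c_0)/\sigma_0\le c_j(1-c_j)/\sigma_j$. The non-convexity obstacle you identify is thus completely bypassed: the heat-flow machinery behind Theorem~\ref{t:GkerGExt} already proves sufficiency of the first-order conditions, without any global gaussian optimisation. Your route, using only Theorem~\ref{t:QMain}, is more elementary but pays for it with the delicate successive minimisation.

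\textbf{Local issues in your write-up.} (i) The sentence ``the claimed value is automatically a lower bound for $\mathrm{I}$ and an upper bound for $\mathrm{F}$'' has the bounds swapped; ${\rm BL}(\mathbf{B},\mathbf{c};\mathbf{G})$ is one term in the infimum defining $\mathrm{I}_{\mathrm g}$, hence an \emph{upper} bound for $\mathrm{I}$. (ii) The derivation of $c_0+c_1+c_2\le2$ comes from \emph{adding} (not multiplying) the two inequalities $(c_0+c_j-1)c_k\sigma_j\le c_j(1-c_j)\sigma_k$ obtained from \eqref{e:CondRegRYoung} and \eqref{e:DefExp}. (iii) The claim that $v\mapsto\partial_v\widetilde\Psi(\sigma_1/c_1,v)$ is increasing is not correct in general (the second derivative $-|c_0|/(\sigma_1/c_1+v)^2+(c_2-1)/v^2$ can change sign); what is true---and suffices---is that $g(v):=\partial_v\widetilde\Psi(\sigma_1/c_1,v)$ has a single sign change from $-$ to $+$ at $v^\ast=(c_2-1)\sigma_1/(c_1(1-c_0-c_2))$, and the second inequality in \eqref{e:CondRegRYoung} gives $v^\ast\le\sigma_2/c_2$, so $g\ge0$ on $[\sigma_2/c_2,\infty)$. (iv) You should also note the boundary cases $c_j=1$ or $1-c_0-c_j=0$, which collapse some of the interior critical points but do not affect the conclusion.
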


\begin{remarks}
(1) Although the relaxation of the scaling condition is not explicit in the above statement, one can show that, when $c_0,c_1,c_2\in (0,1)$, if \eqref{e:CondRegFYoung} holds for some $\sigma_j > 0$ satisfying \eqref{e:DefExp}, then $c_0 + c_1 + c_2 \geq 2$ holds. Similarly, when $c_0 < 0, c_1,c_2 \in [1,\infty)$, it can be shown that if \eqref{e:CondRegRYoung} holds for some $\sigma_j > 0$ satisfying \eqref{e:DefExp}, then $c_0 + c_1 + c_2 \leq 2$ holds.

(2) Sharp forms of Young convolution inequalities have been previously considered in \cite{BB} (not in the dual setting, but in terms of the norm inequality). In particular, it follows from \cite[Corollary 7]{BB} that \eqref{e:FBLYoung} holds under the condition \eqref{e:CondRegFYoung}. However, the condition in \cite{BB} is that
\begin{equation*}
c_0 + c_1 + c_2 \geq 2
\end{equation*}
and, for some $\alpha_1,\alpha_2 \in [0,1]$,
\begin{equation*}
c_0 + \alpha_1 c_1 + \alpha_2 c_2 = 2 \qquad \text{and} \qquad c_1(1 - \alpha_1 c_1) \sigma_2 = c_2(1- \alpha_2 c_2) \sigma_1.
\end{equation*}
Although is not immediately obvious, one can show the equivalence of this condition with \eqref{e:CondRegFYoung}. The inverse inequality \eqref{e:IBLYoung} under the relaxed scaling condition was not explicitly stated in \cite{BB}, but the arguments there can be modified to show that \eqref{e:IBLYoung} follows from \eqref{e:CondRegRYoung}.

(3) The meaning of \eqref{e:FBLYoung} and \eqref{e:IBLYoung} is that the regularized constants are attained ``on the boundary". In other words, for example, \eqref{e:IBLYoung} is equivalent to
\begin{equation}\label{e:21Sep-1}
	{\rm I}(\mathbf{B},\mathbf{c},\mathbf{G}) 
	=
	{\rm BL}(\mathbf{B},\mathbf{c};\mathbf{G}).
	\end{equation}
As one would expect, the proof will proceed via Theorem \ref{t:GkerGExt} and, essentially, our contribution in Corollary \ref{p:DualYoung} is showing that the conditions for gaussian extremizability in Theorem \ref{t:GkerGExt} can be equivalently expressed in the simple form  \eqref{e:CondRegRYoung} (and similarly for the forward inequality).
\end{remarks}

\begin{proof}[Proof of Corollary \ref{p:DualYoung}] 
We give the details for the inverse case (since the forward case is similar) and hence assume $c_0 < 0$, $c_1,c_2 \in [1,\infty)$. As remarked above, the goal is to show \eqref{e:21Sep-1} is equivalent to \eqref{e:CondRegRYoung}.

From $c_0 < 0$, $c_1,c_2 \in [1,\infty)$ it is immediate that the non-degenerate condition \eqref{e:DefNondeg} is satisfied for our datum and hence we may apply Theorem \ref{t:GkerGExt} to see that \eqref{e:21Sep-1} holds if and only if 
	\begin{equation}\label{e:GaussAtain}
		\Gamma_0(\mathbf{G}) \geq 0,\; \Gamma_j(\mathbf{G}) \leq 0\quad (j=1,2),
	\end{equation}
	where 
	\begin{equation}\label{e:Gamma_j}
	\Gamma_j(a_0,a_1,a_2):= a_j^{-1} - \frac{\sum_{k=0,1,2:k\neq j} c_ka_k}{c_0c_1a_0a_1 + c_0c_2a_0a_2 + c_1c_2a_1a_2 }
	\end{equation}
	for $a_0,a_1,a_2 > 0$. Indeed, a straightforward computation reveals that the condition \eqref{e:GaussAtain} coincides with \eqref{e:closurekey1} (with $\mathbf{A} = \mathbf{G}$) for our datum. 
	
	From \eqref{e:DefExp} it is easy to check that $\Gamma_0(\mathbf{G}) = 0$ and thus it suffices to show that 
	\begin{equation} \label{e:YconditionBN}
	\Gamma_j(\mathbf{G}) \leq 0 \quad \Leftrightarrow \quad 	
				\frac{c_0(1-c_0)}{\sigma_0} \le  \frac{c_j(1-c_j)}{\sigma_j}
	\end{equation} 
	for $j=1,2$. To see this for $j=1$, first note that \eqref{e:DefExp} yields
	$$
	\frac{c_0c_1}{\sigma_0\sigma_1} + \frac{c_0c_2}{\sigma_0\sigma_2} + \frac{c_1c_2}{\sigma_1\sigma_2} = \frac{1}{1-c_0} \frac{c_1c_2}{\sigma_1\sigma_2}
	$$
	and hence $\Gamma_1(\mathbf{G}) \leq 0$ is equivalent to
$$
	\sigma_1 \le (1 - c_0) \frac{\sigma_1\sigma_2}{ c_1c_2 } ( \frac{c_0}{\sigma_0} + \frac{c_2}{\sigma_2} ).
	$$ 
This, however, can be rearranged to $\frac{c_0(1-c_0)}{\sigma_0} \le  \frac{c_1(1-c_1)}{\sigma_1}$ by making use of \eqref{e:DefExp}. The equivalence \eqref{e:YconditionBN} for $j=2$ can be verified in the same manner and this completes our proof.
	\end{proof}

\subsection{Regularized Pr\'{e}kopa--Leindler inequality} \label{Sub:PL}
We begin by recalling the one dimensional Pr\'{e}kopa--Leindler inequality which states that if $c_1,c_2$ satisfy the scaling condition 
\begin{equation}\label{e:ScalePL}
	c_1+c_2=1,
\end{equation}
then for all nonnegative $f_1,f_2 \in L^1(\mathbb{R})$, 
\begin{equation}\label{e:PL}
 \bigg( \int_{\mathbb{R}} f_1 \bigg)^{c_1} \bigg( \int_{\mathbb{R}} f_2 \bigg)^{c_2} \leq \int_{\mathbb{R}} \esssup_{\substack{ x_1,x_2\in \mathbb{R}:\\ x= c_1x_1+c_2x_2}} f_1(x_1)^{c_1}f_2(x_2)^{c_2}\, dx. 
\end{equation}

The necessity of the scaling condition \eqref{e:ScalePL} is an easy consequence of the scaling argument.  
As in the case for the Young convolution inequality, one might expect to salvage the Pr\'{e}kopa--Leindler inequality in a scale-free case $c_1+c_2\neq1$ by restricting inputs to regularized datum.
For $c_1,c_2\in (0,1]$ and $\sigma_1,\sigma_2>0$, we define ${\rm PL}(\mathbf{c},(\sigma_1^{-1},\sigma_2^{-1}))\in [0,\infty]$ to be the sharp constant for the inequality 
\begin{equation}\label{e:NormalPL}
 \bigg( \int_{\mathbb{R}} f_1 \bigg)^{c_1} \bigg( \int_{\mathbb{R}} f_2 \bigg)^{c_2}  \le C \int_{\mathbb{R}} \esssup_{\substack{ x_1,x_2\in \mathbb{R}:\\ x= c_1x_1+c_2x_2}} f_1(x_1)^{c_1}f_2(x_2)^{c_2}\, dx,  
\end{equation}
where $(f_1,f_2)\in \mathcal{T}(\sigma_1^{-1})\times \mathcal{T}(\sigma_2^{-1})$.   
From Theorem \ref{t:RegFRBL}, then we have that, regardless of $c_1,c_2\in(0,1]$, 
\begin{equation} \label{e:PLinf}
{\rm PL}(\mathbf{c},(\sigma_1^{-1},\sigma_2^{-1})) = \bigg( \inf_{\substack{ 0<a_j \le \sigma_j^{-1}:\\ j=1,2}} \Phi_{\mathbf{c}} (a_1,a_2)\bigg)^{-1/2},
\end{equation}
where
\[
\Phi_{\mathbf{c}} (a_1,a_2):= a_1^{c_1}a_2^{c_2} \left(\frac{c_1}{a_1} + \frac{c_2}{a_2}\right).
\]
Moreover one can show that ${\rm PL}(\mathbf{c},(\sigma_1^{-1},\sigma_2^{-1})) < \infty$ as long as $c_1+c_2<1$ and $\sigma_1,\sigma_2 >0$. 
Here we use \eqref{e:PLinf} to identify the exact constant under certain conditions on $\sigma_1,\sigma_2$. 

\begin{corollary}\label{Cor:RegPreLei-1d}
	Let $c_1,c_2\in (0,1)$ satisfy $c_1+c_2<1$. 
	For $\sigma_1,\sigma_2>0$, it holds that  
	\begin{equation}\label{e:ConsRegPL}
		{\rm PL}(\mathbf{c},(\sigma_1^{-1},\sigma_2^{-1})) = \Phi_{\mathbf{c}}(\sigma_1^{-1},\sigma_2^{-1})^{-1/2}=\left( \frac{\sigma_1^{c_1}\sigma_2^{c_2}}{c_1\sigma_1+c_2\sigma_2} \right)^{1/2}
	\end{equation}
	if and only if 
	\begin{equation}\label{e:CondRegPL}
		c_1\sigma_1 + c_2\sigma_2 \le \min\{\sigma_1,\sigma_2\}.
	\end{equation}
\end{corollary}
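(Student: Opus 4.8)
The plan is to derive Corollary \ref{Cor:RegPreLei-1d} from the formula \eqref{e:PLinf}, which already reduces matters to analyzing when the infimum of $\Phi_{\mathbf c}$ over the box $0<a_j\le\sigma_j^{-1}$ is attained at the corner $(\sigma_1^{-1},\sigma_2^{-1})$. Equivalently, since $\mathrm{PL}$ is the reciprocal square root of this infimum, \eqref{e:ConsRegPL} holds exactly when $\Phi_{\mathbf c}(\sigma_1^{-1},\sigma_2^{-1})=\inf_{0<a_j\le\sigma_j^{-1}}\Phi_{\mathbf c}(a_1,a_2)$. So the whole task is an elementary two-variable optimization: show that the corner is the minimizer over the box if and only if $c_1\sigma_1+c_2\sigma_2\le\min\{\sigma_1,\sigma_2\}$.

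First I would compute the partial derivatives of $\log\Phi_{\mathbf c}$, which is the clean way to handle the product-times-sum structure. Writing $\Phi_{\mathbf c}(a_1,a_2)=a_1^{c_1}a_2^{c_2}\big(\tfrac{c_1}{a_1}+\tfrac{c_2}{a_2}\big)$, one finds
\[
\frac{\partial}{\partial a_1}\log\Phi_{\mathbf c}(a_1,a_2) = \frac{c_1}{a_1} - \frac{c_1/a_1^2}{c_1/a_1+c_2/a_2} = \frac{c_1}{a_1}\cdot\frac{c_2/a_2 + (c_1-1)/a_1}{c_1/a_1+c_2/a_2},
\]
and symmetrically for $a_2$. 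Since the box is a product set and $\Phi_{\mathbf c}$ is smooth and positive there, the corner $(\sigma_1^{-1},\sigma_2^{-1})$ is the global minimizer over the box if and only if at that point both partials are $\le 0$ (so that decreasing either $a_j$ from the corner does not decrease $\Phi_{\mathbf c}$), \emph{together with} a convexity/monotonicity argument along the way in the box to upgrade this first-order information to a genuine global statement. Evaluating $\partial_{a_1}\log\Phi_{\mathbf c}\le 0$ at $a_j=\sigma_j^{-1}$ gives $c_2\sigma_2+(c_1-1)\sigma_1\le 0$, i.e. $c_1\sigma_1+c_2\sigma_2\le\sigma_1$; similarly the $a_2$-condition gives $c_1\sigma_1+c_2\sigma_2\le\sigma_2$. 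Intersecting, this is precisely \eqref{e:CondRegPL}.

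For the necessity direction this first-order computation already does the job: if \eqref{e:CondRegPL} fails, say $c_1\sigma_1+c_2\sigma_2>\sigma_1$, then $\partial_{a_1}\log\Phi_{\mathbf c}>0$ at the corner, so moving slightly into the box by decreasing $a_1$ strictly decreases $\Phi_{\mathbf c}$, hence the infimum is strictly smaller than $\Phi_{\mathbf c}(\sigma_1^{-1},\sigma_2^{-1})$ and \eqref{e:ConsRegPL} fails. For sufficiency I need the global statement: assuming \eqref{e:CondRegPL}, show $\Phi_{\mathbf c}(a_1,a_2)\ge\Phi_{\mathbf c}(\sigma_1^{-1},\sigma_2^{-1})$ for all $0<a_j\le\sigma_j^{-1}$. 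The cleanest route is to note from the derivative formula that $\partial_{a_1}\log\Phi_{\mathbf c}(a_1,a_2)\le 0$ persists on the whole box: indeed $c_2/a_2+(c_1-1)/a_1\le c_2\sigma_2+(c_1-1)\sigma_1\le 0$ since $c_1-1<0$ (so $(c_1-1)/a_1$ is increasing in $a_1$, maximal at $a_1=\sigma_1^{-1}$) and $c_2/a_2$ is decreasing in $a_2$ hence maximal at $a_2=\sigma_2^{-1}$ — wait, one must be careful: $c_2/a_2$ decreasing in $a_2$ means on $a_2\le\sigma_2^{-1}$ it is at least $c_2\sigma_2$, not at most. So instead I argue: fix $a_2$; the function $a_1\mapsto\log\Phi_{\mathbf c}$ has derivative of the sign of $c_2/a_2+(c_1-1)/a_1$, which is increasing in $a_1$ (as $c_1-1<0$), so $\log\Phi_{\mathbf c}(\cdot,a_2)$ is either monotone decreasing on the whole segment $(0,\sigma_1^{-1}]$ or first decreasing then... no: derivative increasing means the function is convex in $a_1$, so its minimum over $(0,\sigma_1^{-1}]$ is at an interior critical point or at the endpoint; but as $a_1\to0^+$, $a_1^{c_1}\to0$ while $c_1/a_1\to\infty$ with $a_1^{c_1}\cdot a_1^{-1}=a_1^{c_1-1}\to\infty$, so $\Phi_{\mathbf c}\to\infty$, ruling out the left endpoint, and convexity then forces the minimum at $a_1=\sigma_1^{-1}$ \emph{provided} the derivative at $\sigma_1^{-1}$ is $\le 0$, i.e. $c_2/a_2+(c_1-1)\sigma_1\le 0$ for all relevant $a_2$; the worst case is $a_2$ as small as possible, but $a_2$ ranges in $(0,\sigma_2^{-1}]$ so $c_2/a_2$ is unbounded. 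This shows the honest obstacle.

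The main obstacle, then, is precisely this: the corner condition \eqref{e:CondRegPL} controls the derivative only at the corner, and one cannot naively propagate sign information across the box because $c_j/a_j$ blows up as $a_j\to0$. The correct fix is to minimize sequentially and use the structure: first minimize $\Phi_{\mathbf c}(a_1,a_2)$ over $a_1\in(0,\sigma_1^{-1}]$ for fixed $a_2$ by solving $\partial_{a_1}\log\Phi_{\mathbf c}=0$, which gives the critical point $a_1^*(a_2)=\frac{(1-c_1)a_2}{c_2}$; this is an interior minimum of the convex-in-$a_1$ function, and it lies in the admissible range $a_1^*(a_2)\le\sigma_1^{-1}$ exactly when $a_2\le\frac{c_2}{(1-c_1)\sigma_1}$. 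Substituting $a_1^*(a_2)$ back yields a one-variable function of $a_2$ which simplifies to a constant multiple of $a_2^{c_1+c_2-1}$ — decreasing in $a_2$ since $c_1+c_2<1$ — so on the regime where the critical point is admissible, the infimum in $a_2$ is pushed to the largest admissible $a_2$, and one checks that under \eqref{e:CondRegPL} this forces $a_2=\sigma_2^{-1}$ and correspondingly $a_1=\sigma_1^{-1}$ (using that \eqref{e:CondRegPL} says $\sigma_2^{-1}\le\frac{c_2}{(1-c_1)\sigma_1}$ is \emph{false} in general, so actually the critical point is inadmissible and the minimum in $a_1$ sits at the endpoint $\sigma_1^{-1}$, reducing to a one-variable problem in $a_2$ that is then handled the same way). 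Carefully bookkeeping these two cases — critical point admissible vs. endpoint — and checking that \eqref{e:CondRegPL} is exactly the condition placing us in the favorable case is the technical heart; it is routine calculus but requires care with the ranges. Once this is done, \eqref{e:ConsRegPL} follows, and the explicit value $\Phi_{\mathbf c}(\sigma_1^{-1},\sigma_2^{-1})^{-1/2}=\big(\sigma_1^{c_1}\sigma_2^{c_2}/(c_1\sigma_1+c_2\sigma_2)\big)^{1/2}$ is immediate from the definition of $\Phi_{\mathbf c}$.
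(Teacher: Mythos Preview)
Your necessity argument is correct and essentially identical to the paper's: the first-order condition at the corner $(\sigma_1^{-1},\sigma_2^{-1})$ gives exactly the two inequalities in \eqref{e:CondRegPL}, and if either fails one can decrease $\Phi_{\mathbf c}$ by moving into the box.

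For sufficiency your eventual route (sequential minimization in $a_1$ then $a_2$, with a case split according to whether the critical point $a_1^*(a_2)=(1-c_1)a_2/c_2$ falls inside or outside $(0,\sigma_1^{-1}]$) does work once the bookkeeping is carried out, but you leave it unfinished and the presentation is meandering, with two abandoned attempts. The paper avoids all of this with one structural observation you missed: from your own derivative formulas, $\partial_1\Phi_{\mathbf c}=\partial_2\Phi_{\mathbf c}=0$ simultaneously forces $c_1+c_2=1$, so under the hypothesis $c_1+c_2<1$ there is \emph{no interior critical point} in the open box. Hence the infimum lies on the boundary; since $\Phi_{\mathbf c}\to\infty$ as either $a_j\to 0^+$, only the two edges $a_1=\sigma_1^{-1}$ and $a_2=\sigma_2^{-1}$ remain. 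On each edge one has a genuinely one-variable problem: for instance on $\{a_2=\sigma_2^{-1}\}$ the unique critical point is $a_1^*=(1-c_1)/(c_2\sigma_2)$, and the condition $c_1\sigma_1+c_2\sigma_2\le\sigma_1$ in \eqref{e:CondRegPL} is exactly $a_1^*\ge\sigma_1^{-1}$, so $\Phi_{\mathbf c}(\cdot,\sigma_2^{-1})$ is monotone on $(0,\sigma_1^{-1}]$ and its minimum sits at the corner. Symmetrically for the other edge. This cleanly bypasses your two-regime case analysis.
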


\begin{proof}
		We begin with the sufficiency part. Observe that 
	$$
	\partial_1 \Phi_{\mathbf{c}}(a_1,a_2) = c_1a_1^{c_1-1}a_2^{c_2} \bigg( -\frac{1-c_1}{a_1} + \frac{c_2}{a_2} \bigg),\;\;\;
	\partial_2 \Phi_{\mathbf{c}}(a_1,a_2) = c_2a_1^{c_1}a_2^{c_2-1} \bigg( \frac{c_1}{a_1} -\frac{1-c_2}{a_2}  \bigg)
	$$
	and hence 
	$$
	\partial_1\Phi_{\mathbf{c}}(a_1,a_2) = \partial_2\Phi_{\mathbf{c}}(a_1,a_2) = 0 \quad \Leftrightarrow \quad c_1+c_2=1.
	$$
	In particular, in the case $c_1+c_2<1$, there is no extremum on $\{ (a_1,a_2): 0<a_j<\sigma_j^{-1} \}$. Hence the minimum of $\Phi_{\mathbf{c}}(a_1,a_2)$ is attained on the boundary of $[0,\sigma_1^{-1}]\times [0,\sigma_2^{-1}]$. Furthermore, since $\Phi_{\mathbf{c}}(a_1,a_2) = \infty$ if either $a_1=0$ or $a_2=0$, we see that 
	$$
	\inf_{0<a_j<\sigma_j^{-1}} \Phi_{\mathbf{c}} (a_1,a_2) 
	=
	\min{ \bigg( \inf_{0<a_1<\sigma_1^{-1}} \Phi_{\mathbf{c}}(a_1,\sigma_2^{-1}), \inf_{0<a_2<\sigma_2^{-1}} \Phi_{\mathbf{c}}(\sigma_1^{-1},a_2) \bigg) }. 
	$$ 
	Hence it suffices to investigate $\Phi_{\mathbf{c}}(a_1,\sigma_2^{-1}) $ and $\Phi_{\mathbf{c}}(a_1,\sigma_2^{-1}) $. For fixed $\sigma_1^{-1}, \sigma_2^{-1}>0$, we see from the formula of $\partial_j\Phi_{\mathbf{c}}$ that 
	$$
	\partial_1\Phi_{\mathbf{c}}(a_1,\sigma_2^{-1}) = 0 
	\quad \Leftrightarrow \quad a_1 =a_1^*:= \frac{1-c_1}{c_2\sigma_2}
	$$ 
	and similarly 
	$$
	\partial_2\Phi_{\mathbf{c}}(\sigma_1^{-1},a_2) = 0 
	\quad \Leftrightarrow \quad a_2 =a_2^*:= \frac{1-c_2}{c_1\sigma_1}. 
	$$ 
	Now we appeal to our assumption $c_1\sigma_1 + c_2\sigma_2 \le \min{ (\sigma_1,\sigma_2) } $. In fact, from this we see that 
	$$
	a_1^* \notin [0,\sigma_1^{-1}),\;\;\; a_2^* \notin [0, \sigma_2^{-1}). 
	$$
	Namely, $\Phi_{\mathbf{c}}(a_1,\sigma_2^{-1})$ is monotone on $[0,\sigma_1^{-1}]$ and similarly $\Phi_{\mathbf{c}}(\sigma_1^{-1},a_2)$ is monotone on $[0,\sigma_2^{-1}]$. We conclude that 
	$$
	\inf_{0<a_1<\sigma_1^{-1}} \Phi_{\mathbf{c}}(a_1,\sigma_2^{-1})
	= 
	\Phi_{\mathbf{c}}(\sigma_1^{-1},\sigma_2^{-1}),\;\;\; \inf_{0<a_2<\sigma_2^{-1}} \Phi_{\mathbf{c}}(\sigma_1^{-1},a_2)
	= 
	\Phi_{\mathbf{c}}(\sigma_1^{-1},\sigma_2^{-1}) 
	$$ 
	which implies \eqref{e:ConsRegPL}. 
	
	To show the necessity of \eqref{e:CondRegPL}, we define  
	$$
	\Psi(a_1,a_2):= \log\, \Phi_{\mathbf{c}}(a_1,a_2) = \sum_{j=1,2} c_j \log a_j + \log (c_1a_1^{-1}+c_2a_2^{-1}),\;\;\; a_1,a_2>0.
	$$
	Then from the assumption \eqref{e:ConsRegPL} we see that 
	$$
	\frac{d}{d\varepsilon}\Psi (\sigma_1^{-1} - \varepsilon, \sigma_2^{-1})|_{\varepsilon=0} := \lim_{\varepsilon \downarrow 0} \frac1{\varepsilon} \big( \Psi(\sigma_1^{-1} - \varepsilon, \sigma_2^{-1}) - \Psi(\sigma_1^{-1}, \sigma_2^{-1}) \big)\ge 0. 
	$$
	On the other hand, we have 
	$$
	\frac{d}{d\varepsilon}\Psi (\sigma_1^{-1} - \varepsilon, \sigma_2^{-1})|_{\varepsilon=0}
	= 
	-c_1\sigma_1 + \sigma_1  - \frac{c_2}{c_1 \sigma_2^{-1} + c_2 \sigma_1^{-1}}
	$$
	and hence it follows that 
	$$
	 \frac{c_2}{c_1 \sigma_2^{-1} + c_2 \sigma_1^{-1}} - (1-c_1) \sigma_1 \le 0
	$$
	which yields $c_1\sigma_1 + c_2\sigma_2 \le \sigma_1$. Similarly, by considering a perturbation in $a_2$, we can obtain $c_1\sigma_1 + c_2\sigma_2 \le \sigma_2$. 
\end{proof}

\subsection{Regularized forms of hypercontractivity inequalities} \label{Sub:H}
We conclude this section with some remarks about the forward and inverse hypercontractivity inequalities of the form
\begin{equation}\label{e:ForwardHC}
	\|e^{s\mathcal{L}}(F^{1/p}) \|_{L^{q}(d\gamma)} \le \bigg( \int_{\mathbb{R}} F\, d\gamma \bigg)^{1/p}  \qquad (p > 1, s > 0)
\end{equation}
for all nonnegative $F \in L^1(d\gamma)$, and 
\begin{equation}\label{e:ReersedHC}
	\|e^{s\mathcal{L}}(F^{1/p}) \|_{L^{q}(d\gamma)} \ge \bigg( \int_{\mathbb{R}} F\, d\gamma \bigg)^{1/p} \qquad (0 \neq p < 1, s > 0)
\end{equation}
for all positive $F \in L^1(d\gamma)$. Here, $(e^{s\mathcal{L}})_{s > 0}$ is the Ornstein--Uhlenbeck semigroup given by 
\begin{equation*}
	e^{s\mathcal{L}}F(x):= \int_{\mathbb{R}}F(e^{-s}x+(1-e^{-2s})^{1/2}y)\, d\gamma(y),
\end{equation*}
where $\gamma$ is the density function of the standard normal distribution
\[
d\gamma(x):= g_{1/(2\pi)}(x)dx.
\]
Also, the exponents $p,q \in \mathbb{R}$ and $s > 0$ in \eqref{e:ForwardHC} and \eqref{e:ReersedHC} satisfy the relation
\begin{equation}\label{e:Nelsontime}
	e^{2s} = \frac{q-1}{p-1},
\end{equation}
and the constant $1$ appearing in both inequalities is optimal.
The forward inequality is due to Nelson \cite{Nelson} and the inverse inequality is due to Borell \cite{Borell}; we refer the reader to, for example, \cite{BGL} for further details about the importance of inequalities of this kind.

There are a number of ways to obtain \eqref{e:ForwardHC} and \eqref{e:ReersedHC}, one of which is to write
\[
\|e^{s\mathcal{L}}(F^{1/p}) \|_{L^{q}(d\gamma)} = C \| (Fg_{1/(2\pi)})^{1/p} * g_\star \|_{L^q(dx)}
\]
where $C$ is a constant and $g_\star$ is an isotropic gaussian (both explicitly computable). From this expression, one may obtain \eqref{e:ForwardHC} and \eqref{e:ReersedHC} from the sharp form of the forward and inverse Young convolution inequalities; see \cite[Theorem 5]{Beck}. Since one of the inputs is a fixed gaussian, the scale-invariance property of the Young convolution inequality for general inputs ceases to hold, and one may thus expect to improve the constant in \eqref{e:ForwardHC} and \eqref{e:ReersedHC} by restricting to inputs $F$ which are regularized in an appropriate sense. One can obtain certain results of this type via Theorem \ref{t:QMain} (and its forward counterpart) by using the representation
 	\begin{align*}
		\|e^{s\mathcal{L}}(F^{1/p})\|_{L^{q}(d\gamma)} 
		= 
		C(p,s)
		\int_{\mathbb{R}^2} e^{-\pi\langle x, \mathcal{Q} x\rangle} \prod_{j=1,2} f_j(B_jx)^{c_j} \, dx,
	\end{align*}
	where 
	\begin{align*}
	&C(p,s):= (2\pi)^{\frac12(\frac1p + \frac1{q'})-1} (1-e^{-2s})^{-1/2}    ,\\
	&c_1:= \frac1p, \quad c_2 =\frac1{q'}, \qquad B_j(x_1,x_2):=x_j,
	\end{align*}
	and
	\begin{align*}
		\mathcal{Q}&:= \frac{1}{2\pi(1-e^{-2s})}
		\left(
		\begin{array}{cc}
			1-(1-e^{-2s})\frac1p & -e^{-s} \\
			-e^{-s} & 1-(1-e^{-2s})\frac1{q'}
		\end{array}
		\right),\\
		f_1(x_1)&:= F(x_1) g_{1/(2\pi)}(x_1),\;\;\; f_2(x_2):= \frac{e^{s\mathcal{L}}(F^{1/p})(x_2)^q}{\|e^{s\mathcal{L}}(F^{1/p})\|_{L^q(d\gamma)}^q} g_{1/(2\pi)}(x_2).
	\end{align*}
We refrain from explicitly stating such results here since stronger results have been obtained in collaboration with Hiroshi Tsuji in \cite{BNT} and we refer the reader there for precise statements. For instance, we also proved that the best constant for the logarithmic Sobolev inequality and Talagrand's inequality can be improved by restricting inputs to certain regularized functions. 

We also remark that it would be natural to consider bounds on more general gaussian kernel operators and to investigate the extent to which these are quantitatively improved upon by heat-flow regularization. For instance, it is already interesting to consider the gaussian kernel above for $p,q$ that do not satisfy \eqref{e:Nelsontime}. For such data, the non-degeneracy condition \eqref{e:NondegBW} fails to hold and moreover one cannot expect a nontrivial Brascamp--Lieb constant, see \cite{BW} and \cite{NT}. 
From similar reasoning, such data does not appear to fit into the framework of the forward-reverse Brascamp--Lieb inequality in Theorem \ref{t:RegFRBL}.
However, the second author and Tsuji \cite{NT,NT2} very recently observed that one can recover the Brascamp--Lieb inequality associated to such data if one restricts the inputs $f_1,f_2$ to be \textit{even} functions, and moreover showed how such an improvement implies the functional Blaschke--Santal\'{o} inequality\footnote{For further information regarding the functional Blaschke--Santal\'{o} inequality, we refer the reader to \cite{AKM,BallPhd, CGNT, CFM, Fathi, FraMeyMathZ, FMZSurvey, KW, LehecDirect, LehecYaoYao}. For more detailed discussion about this new link to the volume product, including Mahler's conjecture, we refer the interested reader to \cite{NT,NT2}. }.

\section*{Acknowledgements}
The first author would like to thank Jon Bennett for numerous enlightening conversations around the subject of this work. The second author would like to thank  the organizers of the Harmonic Analysis Seminar held at Shinshu University in 2020 where he learnt several ideas used in this work. Finally, both authors would like to express their sincere gratitude to an anonymous referee whose comments led to substantial improvements in the paper. In particular, the referee suggested to us that a result such as Theorem \ref{t:RegFRBL} should be true, and the manner in which it is equivalent to Theorem \ref{t:QMain}.

\section*{Appendix: On the equivalence between Theorems \ref{t:QMain} and \ref{t:RegFRBL}} \label{subsection:equivproof}

\begin{proof}[Theorem \ref{t:RegFRBL} $\Rightarrow$ Theorem \ref{t:QMain}]
Fix a non-degenerate datum $(\mathbf{B},\mathbf{c},\mathcal{Q},\mathbf{G})$ in the sense of \eqref{e:NondegBW} and  \eqref{e:NondegG}, where the $B_i$ are mappings from $\mathbb{R}^n$ to $\mathbb{R}^{n_i}$, $i = 1,\ldots, m$. To show Theorem \ref{t:QMain}, it suffices to show  
\begin{align}\label{e:RegIBL-Nov12}
	\int_{\mathbb{R}^n} e^{ -\pi \langle x, \mathcal{Q}x\rangle } \prod_{i=1}^{m_+} f_i(B_i x)^{c_i} \prod_{j=m_++1}^m h_j(B_jx)^{c_j}\, dx 
	\ge 
	{\rm I} \prod_{i=1}^{m_+} \bigg( \int_{\mathbb{R}^{n_i}} f_i \bigg)^{c_i}
\prod_{j=m_++1}^{m} \bigg( \int_{\mathbb{R}^{n_j}} h_j \bigg)^{c_j}\end{align}
for $f_i\in \mathcal{T}(G_i)$ and $h_j \in \mathcal{T}(G_j)$, where 
$$
{\rm I}:= \inf_{\substack{A_i\le G_i \\ i=1,\ldots,m} } \frac{ \prod_{i=1}^{m} \big( {\rm det}\, A_i \big)^\frac{c_i}2 }{  {\rm det}\, \big( \mathcal{Q} + \sum_{i=1}^{m} c_i B_i^* A_iB_i \big)^{\frac12} }. 
$$

First we use Proposition \ref{p:BWdecomp} to decompose 
$$
\mathcal{Q} = B_0^* \mathcal{Q}_+B_0 - B_{m+1}^* \mathcal{Q}_- B_{m+1}.
$$
Here, $\Phi:=(B_0,\mathbf{B}_+): \mathbb{R}^n \to E$ is bijective, where $E := \oplus_{i=0}^{m_+} \mathbb{R}^{n_i}$. Also, ${\rm ker}\, \mathbf{B}_+ \subseteq {\rm ker}\, B_{m+1}$, where $\mathcal{Q}_{\pm}$ are positive definite on $\mathbb{R}^{n_0}$ and $\mathbb{R}^{n_{m+1}}$ respectively. For reasons that will soon become apparent, we set
$$
Q_L:= \mathcal{Q}_+,\quad Q_R:= (\Phi^{-1})^* B_{m+1}^* \mathcal{Q}_- B_{m+1} \Phi^{-1}. 
$$

Clearly we have 
\begin{equation}\label{e:PropPhi}
	B_i\circ \Phi^{-1} = \pi_i,\quad i=0,1,\ldots,m_+
\end{equation}
and, with this in mind, we see that \eqref{e:NondegG} implies (in fact, is equivalent to)
\begin{align*}
	(\Phi^{-1})^*  \bigg( B_0^* \mathcal{Q}_+ B_0 - B_{m+1} \mathcal{Q}_- B_{m+1} + \sum_{i=1}^{m_+} c_i B_i^* G_i B_i \bigg) \Phi^{-1} >0
\end{align*}
and hence also 
\begin{align} \label{e:nondeg2}
	\pi_0^* Q_L  \pi_0 - Q_R + \sum_{i=1}^{m_+} c_i \pi_i^* G_i \pi_i >0. 
\end{align}
With the non-degeneracy condition \eqref{e:NondegFRBL-Nov12}  in mind, at this point we also observe that 
\begin{equation} \label{e:nondeg1}
\mathbb{R}^{n_0} \oplus\{0\}\oplus\cdots \oplus \{0\} \subseteq {\rm ker}\, Q_R 
\end{equation}
holds. To see this, it clearly suffices to check $B_{m+1}\Phi^{-1}(x_0,0,\ldots,0) = 0$. However,
\begin{align*}
\mathbf{B}_+\Phi^{-1}(x_0,0,\ldots,0) 
&= 
( B_1\Phi^{-1}(x_0,0,\ldots,0),\ldots, B_{m_+}\Phi^{-1}(x_0,0,\ldots,0) )\\
&= 
(\pi_1 (x_0,0,\ldots,0),\ldots, \pi_{m_+} (x_0,0,\ldots,0) ) =0
\end{align*}
and since ${\rm ker}\, \mathbf{B}_+ \subseteq {\rm ker}\, B_{m+1}$ we obtain the desired conclusion.

Next, given $f_i\in \mathcal{T}(G_i)$, $i = 1,\ldots,m_+$ and $h_j\in \mathcal{T}(G_j)$, $j = m_+ + 1,\ldots, m$, we introduce the function $h_{m + 1}:E \to \mathbb{R}_+$ by 
\begin{align*}
h_{m + 1}(x)
&:= 
e^{ -\pi \langle \pi_0x,\mathcal{Q}_+\pi_0x\rangle } \prod_{i=1}^{m_+} f_i ( B_i \circ \Phi^{-1}(x) )^{c_i}\\
&\quad \times 
e^{ \pi \langle \Phi^{-1}(x), B_{m+1}^* \mathcal{Q}_- B_{m+1} \Phi^{-1} (x) \rangle  }  \prod_{j=m_++1}^m h_j( B_j \circ \Phi^{-1}(x) )^{c_j}. 
\end{align*}
Setting $T_j:= B_j\circ \Phi^{-1}$ for $j = m_+ + 1,\ldots,m$,  and $T_{m + 1} = \mathrm{id}$, we then have
\begin{align*}
h_{m + 1}(T_{m + 1}x) &= e^{ - \pi \langle \pi_0x, Q_L \pi_0x\rangle} \prod_{i=1}^{m_+} f_i(\pi_ix)^{c_i} \times e^{\pi \langle x, Q_Rx\rangle}  \prod_{j=m_++1}^m h_j(T_jx)^{c_j}
\end{align*}
thanks to \eqref{e:PropPhi}. In particular, \eqref{e:AssumpFRBL} holds for inputs $(f_i)_{i = 1}^{m_+}$ and $(h_j)_{j = m_+ + 1}^{m + 1}$, and exponents $\mathbf{d} = ((c_i)_{i = 1}^{m_+}, (-c_{m_+ +1},\ldots, -c_m,1))$. Therefore, we may apply Theorem \ref{t:RegFRBL} with 
\[
\mathfrak{D} = ((T_j)_{j = m_+ + 1}^{m+1}, \mathbf{d},Q, ( (G_i)_{i = 1}^{m_+}, (G_{m_++1},\ldots,G_m,\infty)))
\]
to see 
\begin{equation}\label{e:ApplyFRBL}
\prod_{i=1}^{m_+} \bigg(\int_{\mathbb{R}^{n_i}} f_i \bigg)^{c_i}  \prod_{j=m_++1}^m \bigg(\int_{\mathbb{R}^{n_j}} h_j\bigg)^{c_j} 
\le 
{\rm FR}(\mathfrak{D}) \int_{E} h_{m + 1},
\end{equation}
where 
$$
{\rm FR}(\mathfrak{D}) = 
\sup 
\big( {\rm det}\, A \big)^\frac12 \prod_{i=1}^{m} \big( {\rm det}\, A_i \big)^{-\frac{c_i}2}
$$
and the supremum is taken over all $A_i \le G_i$, $i = 1,\ldots,m$, and $A>0$ satisfying 
\begin{equation}\label{e:SupCondition}
\pi_0^* Q_L \pi_0 + \sum_{i=1}^{m_+} c_i \pi_i^* A_i \pi_i \ge Q_R + \sum_{j=m_++1}^m (-c_j) T_j^* A_j T_j +A. 
\end{equation}
Here we remark that, strictly speaking, we are using a variant of Theorem \ref{t:RegFRBL} which admits $h_{m+1} \in \mathcal{T}(\infty)$. Such a result can be quickly obtained from Theorem \ref{t:RegFRBL} by a limiting argument, and this explains why the above matrix $A$ is an arbitrary positive definite matrix. The non-degeneracy condition is not affected by this limiting argument, and we have already verified it above in \eqref{e:nondeg2} and \eqref{e:nondeg1}.

Notice that $x_i = \pi_i (x_0,\ldots,x_{m_+}) = B_i \Phi^{-1}(x_0,\ldots,x_{m_+})$ from \eqref{e:PropPhi}. 
Hence, by the change of variable $\Phi^{-1}(x_0,\ldots,x_{m_+}) = y$, we see that 
\begin{align*}
	&\int_{E} h_{m+1}(x_0,\ldots,x_{m_+})\, dx_0\cdots dx_{m_+} \\
	&=  {\rm det} \, \Phi
	\int_{\mathbb{R}^n} e^{ -\pi \langle y, B_0^* \mathcal{Q}_+ B_0 y\rangle  } e^{\pi \langle y, B_{m+1}^* \mathcal{Q}_- B_{m+1} y\rangle } \prod_{i=1}^{m_+} f_i(B_i y)^{c_i} \prod_{j=m_++1}^m h_j(B_jy)^{c_j}  \, dy .
\end{align*}
Hence, it follows from \eqref{e:ApplyFRBL} that 
\begin{align*}
&\int_{\mathbb{R}^n} e^{-\pi\langle x,\mathcal{Q}x\rangle} \prod_{i=1}^{m_+} f_i(B_iy)^{c_i} \prod_{j=m_++1}^m h_j(B_jy)^{c_j} \, dy \\
&\ge {\rm FR}(\mathfrak{D})^{-1} {\rm det}\, \Phi^{-1} \prod_{i=1}^{m_+} \bigg( \int_{\mathbb{R}^{n_i}} f_i \bigg)^{c_i}
\prod_{j=m_++1}^{m} \bigg( \int_{\mathbb{R}^{n_j}} h_j \bigg)^{c_j}. 
\end{align*}
It remains to estimate ${\rm FR}(\mathfrak{D})$. To this end, we note that \eqref{e:SupCondition} is equivalent to 
$$
(\Phi^{-1})^* \bigg(
\mathcal{Q} + \sum_{i=1}^{m} c_i B_i^* A_i B_i \bigg)\Phi^{-1} \ge A
$$
and hence 
\begin{align*}
{\rm det}\, A 
&\le \frac1{({\rm det}\, \Phi)^2} 
{\rm det}\, \bigg( \mathcal{Q} + \sum_{i=1}^{m} c_i B_i^* A_i B_i \bigg).
\end{align*}
This shows that 
\begin{align*}
{\rm FR}(\mathfrak{D})
&\le 
\frac1{{\rm det}\, \Phi}  \sup_{\substack{A_i \leq G_i \\ i = 1,\ldots,m}  }
{\rm det}\, \bigg( \mathcal{Q}+ \sum_{i=1}^{m} c_i B_i^* A_i B_i  \bigg)^\frac12 \prod_{i=1}^{m} \big( {\rm det}\, A_i \big)^{-\frac{c_i}2} \\
&=
\frac1{ {\rm det}\, \Phi} \times  \frac1{ \mathrm{I}} 
\end{align*}
which concludes the proof of \eqref{e:RegIBL-Nov12}. 
\end{proof}

\begin{proof}[Theorem \ref{t:QMain} $\Rightarrow$ Theorem \ref{t:RegFRBL}]
We follow a limiting argument due to Wolff and presented in \cite[Section 4]{CouLiu}. First, let us show the following.
\begin{claim} \label{claim:Appendix}
Suppose $\mathfrak{D} = (\mathbf{T}, \mathbf{d}, Q,\mathbf{G})$ is a generalized forward-reverse Brascamp--Lieb datum which is non-degenerate (notation from Section \ref{subsection:RegFRBL} will prevail). Consider the Brascamp--Lieb datum $(\mathbf{B},\mathbf{c},\mathcal{Q})$ defined by 
\begin{align*}
\mathbf{B} & := ( \pi_1,\ldots,\pi_I,T_1,\ldots,T_J ), \\
\mathbf{c} & := (d_1,\ldots,d_I, -d(1),\ldots,-d(J)), \\ 
\mathcal{Q} & := \pi_0^* Q_L  \pi_0 -  Q_R.
\end{align*}
If $f_i \in \mathcal{T}(G_i),h_j\in \mathcal{T}(G(j))$ and $\widetilde{h} \in \mathcal{T}(\infty)$ satisfy 
\begin{equation}\label{e:AssumpFRBL-2}
	e^{ - \pi \langle \pi_0x, Q_L \pi_0x \rangle  } \prod_{ i=1 }^I  f_i(\pi_ix)^{d_i} \le e^{ - \pi \langle x,Q_R x\rangle } \prod_{j=1}^J h_j(T_jx)^{d(j)} \times \widetilde{h}(x),\quad x \in E,
\end{equation}
then 
\begin{equation}\label{e:App}
	{\rm I}(\mathfrak{D})
	\prod_{i=1}^I \bigg( \int_{E_i} f_i \bigg)^{d_i}
	\le  
	\prod_{j=1}^J \bigg( \int_{E(j)} h_j \bigg)^{d(j)}
	\int_{E} \widetilde{h},
\end{equation}
where
$$
{\rm I}(\mathfrak{D}) := \inf_{\substack{A_i\le G_i \\ A(j)\le G(j)}} {\rm BL} (\mathbf{B},\mathbf{c},\mathcal{Q};( (A_i)_{i=1}^I, (A(j))_{j=1}^J ).
$$  
\end{claim}
\begin{proof}[Proof of Claim \ref{claim:Appendix}]
In order to apply Theorem \ref{t:QMain}, let us check the non-degenerate conditions \eqref{e:NondegBW} and \eqref{e:NondegG}. It is clear from the setup that $\mathbf{B}_+ = (\pi_i)_{i=1}^I$ and thus the first condition in \eqref{e:NondegFRBL-Nov12} implies ${\rm ker}\, \mathbf{B}_+ \subseteq {\rm ker}\, Q_R$. This means $\mathcal{Q} x = \pi_0^* Q_L  \pi_0x$ whenever $x \in {\rm ker}\, \mathbf{B}_+$ which verifies the first condition in \eqref{e:NondegBW}. Also, we note that $s^+(\mathcal{Q}) \le n_0$ and hence $s^+(\mathcal{Q}) + \sum_{i=1}^I n_i \le \dim E$; this ensures the remaining condition in \eqref{e:NondegBW}. Finally, we observe that  \eqref{e:NondegG} is a direct consequence of the second condition in \eqref{e:NondegFRBL-Nov12}.  

Now \eqref{e:AssumpFRBL-2} clearly implies 
$$
\int_{E} \widetilde{h} 
\ge 
\int_{E} e^{-\pi \langle x,\mathcal{Q}x\rangle} \prod_{i=1}^I f_i(\pi_i x)^{d_i} \prod_{j=1}^J h_j (T_jx)^{-d(j)}\, dx  
$$
and an application of Theorem \ref{t:QMain} with the datum $(\mathbf{B},\mathbf{c},\mathcal{Q},\mathbf{G})$ yields \eqref{e:App}.
\end{proof}

Returning to the proof that Theorem \ref{t:QMain} implies Theorem \ref{t:RegFRBL}, we start by fixing a generalized forward-reverse Brascamp--Lieb datum $\mathfrak{D} = (\mathbf{T}, \mathbf{d}, Q,\mathbf{G})$ which is non-degenerate. 
For each $t > 0$, we set 
$$
d_{1,t}:= 1+ td_1,\ldots, d_{I,t}:= 1 + td_I,
\;d_{t}(1):= td(1),\ldots,d_{t}(J):=t d(J)
$$
and 
$$
Q_{L,t} := t Q_L,\; Q_{R,t} := t Q_R,  
$$
and consider the family of forward-reverse Brascamp--Lieb data $\mathfrak{D}_t = (\mathbf{T},\mathbf{d}_t, Q_t,\mathbf{G})$. Since the original data $\mathfrak{D}$ is non-degenerate, it is easy to verify that $\mathfrak{D}_t$ is non-degenerate for each $t > 0$.

Now fix $f_i \in \mathcal{T}(G_i), h_j\in \mathcal{T}(G(j))$ satisfying \eqref{e:AssumpFRBL} and set 
$$
\widetilde{h}_t(x):= e^{ - \pi \langle \pi_0x, Q_{L,t} \pi_0x\rangle } \prod_{i=1}^I  f_i(\pi_ix)^{d_{i,t}} e^{\pi \langle x, Q_{R,t} x\rangle} \prod_{j=1}^J h_j (T_jx)^{-d_{t}(j)}\quad  (x \in E).
$$
By Claim \ref{claim:Appendix},
$$
{\rm I}(\mathfrak{D}_t) \prod_{i=1}^I \bigg( \int_{E_i} f_i \bigg)^{d_{i,t}} \le \prod_{j=1}^J \bigg( \int_{E(j)} h_j \bigg)^{d_{t}(j)} \int_{E} \widetilde{h}_t,
$$
where 
$$
{\rm I}(\mathfrak{D}_t) := \inf_{\substack{ A_i\le G_i \\ A(j)\le G(j)}} {\rm BL} ( \mathbf{B},\mathbf{c}_t,\mathcal{Q}_t;( (A_i)_{i=1}^I, (A(j))_{j=1}^J )).
$$
Here, $\mathbf{B}$ is exactly as in Claim \ref{claim:Appendix}, and 
$$
\mathbf{c}_t:= (d_{i,t})_{i=1}^I,(-d_{t}(j))_{j=1}^J,\; \mathcal{Q}_t:= \pi_0^* Q_{L,t}  \pi_0 - Q_{R,t}. 
$$
Notice that \eqref{e:AssumpFRBL} and the definition of the exponents $d_{i,t},d_{t}(j)$ yield
\begin{align*}
\widetilde{h}_t(x) 
&= 	
\prod_{i=1}^I f_i(x_i)
\bigg(
e^{- \pi \langle x_0, Q_L x_0\rangle} 
\prod_{i=1}^I  
f_i(x_i)^{d_{i}}
e^{\pi \langle x,Q_{R} x\rangle }
\prod_{j=1}^J h_j(T_jx)^{-d(j)} 
\bigg)^t\\
&\le 
\prod_{i=1}^I f_i(x_i)
\end{align*}
and hence 
$$
{\rm I}(\mathfrak{D}_t) \prod_{i=1}^I \bigg( \int_{E_i}f_i \bigg)^{d_{i,t}} \le \prod_{j=1}^J \bigg( \int_{E(j)} h_j \bigg)^{d_{t}(j)} \prod_{i=1}^I \int_{E_i} f_i.
$$
After rearranging terms and taking a limit, we obtain 
$$
\prod_{i=1}^I \bigg( \int_{E_i} f_i \bigg)^{d_{i}} 
\le 
\liminf_{t\to \infty} {\rm I}(\mathfrak{D}_t)^{-\frac1t}
\prod_{j=1}^J \bigg( \int_{E(j)} h_j \bigg)^{d({j})}
$$ 
and so it suffices to check 
$$
\liminf_{t\to \infty}{\rm I}(\mathfrak{D}_t)^{-\frac1t} 
\le {\rm FR}(\mathfrak{D}).
$$

To investigate ${\rm I}(\mathfrak{D}_t)^{-\frac1t}$, we compute  
\begin{align*}
	&\int_{E} e^{ -\pi \langle x, \mathcal{Q}_tx\rangle } \prod_{i=1}^I \gamma_{A_i}(x_i)^{d_{i,t}} \prod_{j=1}^J \gamma_{A(j)}(T_j x)^{-d_{t}(j)}\, dx \\
	&= 
	\int_{E} 
	\exp\,\bigg( -\pi \bigg\langle x, \big( \pi_0^* Q_{L,t}  \pi_0 + \sum_{i=1}^I d_{i,t} \pi_i^* A_i \pi_i - Q_{R,t}  - \sum_{j=1}^J d_{t}(j) T_j^* A(j) T_j \big) x\bigg\rangle \bigg)\, dx\\
	&= 
	{\rm det}\, \bigg( t \pi_0^* Q_L \pi_0 + \sum_{i=1}^I (1+td_{i} )\pi_i^* A_i \pi_i - t Q_R  - \sum_{j=1}^J td({j}) T_j^* A(j) T_j \bigg)^{-\frac12}
\end{align*}
if $A_i,A(j)$ satisfy 
$$
t\pi_0^* Q_L  \pi_0 + \sum_{i=1}^I (1+td_{i} )\pi_i^* A_i \pi_i - t Q_R  - \sum_{j=1}^J t d({j}) T_j^* A(j) T_j
>0,
$$
otherwise the integral coincides with $+\infty$. 
Moreover, since we take $t\to\infty$, we may restrict attention to $A_i,A(j)$ satisfying 
$$
\pi_0^* Q_L \pi_0 + \sum_{i=1}^I d_{i}\pi_i^* A_i \pi_i - Q_R - \sum_{j=1}^J d({j}) T_j^* A(j) T_j 
\geq 0,
$$
which coincides with condition \eqref{e:GaussAssump}. With this in mind, and from the lower bound
\begin{align*}
	& {\rm BL} ( \mathbf{B},\mathbf{c}_t,\mathcal{Q}_t;( (A_i)_{i=1}^I, (A(j))_{j=1}^J )) \\
	&=\frac{\int_{E} e^{ -\pi \langle x, \mathcal{Q}_tx\rangle } \prod_{i=1}^I \gamma_{A_i}(x_i)^{d_{i,t}} \prod_{j=1}^J \gamma_{A(j)}(T_j x)^{-d_{t}(j)}\, dx}{\prod_{i=1}^I \big( \int_{E_i} \gamma_{A_i} \big)^{d_{i,t}} \prod_{j=1}^J \big( \int_{E(j)} \gamma_{A(j)} \big)^{-d_{t}(j)} }\\
	&\ge 
	{\rm det}\, \bigg( t \pi_0^* Q_L \pi_0 + \sum_{i=1}^I (1+td_{i} )\pi_i^* A_i \pi_i \bigg)^{-\frac12} \prod_{ i=1 }^I ({\rm det}\, A_i)^\frac{1+td_i}{2} \prod_{j=1}^J ( {\rm det}\, A(j) )^{ -\frac{td(j)}2  } \\
	&= 
	\bigg( t^{n_0} {\rm det}\, Q_L \prod_{i=1}^I(1+td_{i})^{n_i} 
	\prod_{i=1}^I ({\rm det}\, A_i)^{ -td_i} \prod_{j=1}^J ({\rm det}\, A(j))^{td(j)}  \bigg)^{-\frac12}
\end{align*}
we see that
\begin{align*}
&\liminf_{t\to \infty}{\rm I}(\mathfrak{D}_t)^{-\frac1t}\\
&\le 
\liminf_{t\to\infty} \bigg( t^{n_0} {\rm det}\, Q_L \prod_{i=1}^I (1+td_{i})^{n_i} \bigg)^{ \frac1{2t} }
\sup_{ \substack{A_i\le G_i,A(j)\le G(j):\\ \eqref{e:GaussAssump}} }\prod_{i=1}^I ({\rm det}\, A_i)^{ -\frac{d_i}2} \prod_{j=1}^J ({\rm det}\, A(j))^{\frac{d(j)}2}\\
&= 
\sup_{ \substack{ A_i\le G_i,A(j)\le G(j):\\ \eqref{e:GaussAssump}} }  \prod_{i=1}^I ({\rm det}\, A_i)^{ -\frac{d_i}2} \prod_{j=1}^J ({\rm det}\, A(j))^{\frac{d(j)}2}
\end{align*}
which concludes the proof. 
\end{proof}

\end{document}